\newtheorem{Th}{Theorem}[section]
\newtheorem{Prop}[Th]{Proposition}
\newtheorem{Lem}[Th]{Lemma}
\newtheorem{Cor}[Th]{Corollary}
\newtheorem{Rem}[Th]{Remark}
\newenvironment{altproof}[1]
{\noindent
	{\em Proof of {#1}}.}
{\nopagebreak\mbox{}\hfill $\Box$\par\addvspace{0.5cm}}
\newcommand{\wt}{\widetilde}
\newcommand{\vp}{\varphi}
\newcommand{\eps}{\varepsilon}
\def\div{\mathop{\mathrm{div}\,}}
\def\Z{\mathbb{Z}}
\def\N{\mathbb{N}}
\def\R{\mathbb{R}}
\def\dim{\mathrm{dim}}
\def\cl{\mathrm{cl\,}}
\def\tU{{\tilde U}}
\def\W{\mathcal{W}}
\newcommand{\cC}{{\mathcal C}}
\newcommand{\cH}{{\mathcal H}}
\newcommand{\cJ}{{\mathcal J}}
\newcommand{\cK}{{\mathcal K}}
\newcommand{\cL}{{\mathcal L}}
\newcommand{\cM}{{\mathcal M}}
\newcommand{\cN}{{\mathcal N}}
\newcommand{\cT}{{\mathcal T}}
\newcommand{\cV}{{\mathcal V}}
\newcommand{\cW}{{\mathcal W}}
\renewcommand{\dim}{{\rm dim}\,}
\newcommand{\ga}{\gamma}
\newcommand{\Ga}{\Gamma}
\newcommand{\tw}{\widetilde{w}}
\def\curlop{\nabla\times}
\newcommand{\weakto}{\rightharpoonup}
\newcommand{\pa}{\partial}
\newcommand{\tu}{\widetilde{u}}
\newcommand{\tv}{\widetilde{v}}
\newcommand{\tPhi}{\widetilde{\Phi}}
\newcommand{\talpha}{\widetilde{\alpha}}
\newcommand{\tbeta}{\widetilde{\beta}}
\newcommand{\cTto}{\stackrel{\cT}{\longrightarrow}}
\DeclareMathOperator{\Id}{Id}
\DeclareMathOperator{\dom}{dom}
\DeclareMathOperator*{\essinf}{ess\,inf}
\DeclareMathOperator*{\esssup}{ess\,sup}
\newcommand{\dist}{\text{\rm dist}}
\newcommand{\cnabla}{\overset{\circ}{\nabla}}
\numberwithin{equation}{section}
\begin{document}
	\title[Travelling waves for Maxwell's equations]{Travelling waves for Maxwell's equations in nonlinear and nonsymmetric media}
\author{Jaros\l{}aw Mederski}
\address{J. Mederski \hfill\break  
	Institute of Mathematics,\hfill\break 
	Polish Academy of Sciences  \hfill\break
	ul. \'Sniadeckich 8, 00-656 Warsaw, Poland
	}
\email{jmederski@impan.pl}

\author{Wolfgang Reichel}
\address{W. Reichel \hfill\break 
	Departement of Mathematics, Institute for Analysis,\hfill\break
	Karlsruhe Institute of Technology (KIT), \hfill\break
	D-76128 Karlsruhe, Germany}
\email{wolfgang.reichel@kit.edu}

\date{\today}

\subjclass[2000]{Primary: 35J20, 58E15; Secondary: 47J30, 35Q60}

\keywords{Maxwell equations, Kerr nonlinearity, curl-curl problem, travelling wave, variational methods}

\begin{abstract}
We look for travelling wave fields 
$$
E(x,y,z,t)= U(x,y) \cos(kz+\omega t)+ \wt U(x,y)\sin(kz+\omega t),\quad (x,y,z)\in\R^3,\, t\in\R
$$ 
satisfying Maxwell's equations in a nonlinear  medium which is not necessarily cylindrically symmetric. The nonlinearity of the medium enters Maxwell's equations by postulating a nonlinear material law $D=\eps E+\chi(x,y, \langle |E|^2\rangle)E$ between the electric field $E$, its time averaged intensity $\langle |E|^2\rangle$ and the electric displacement field $D$. We derive a new semilinear elliptic problem for the profiles $U,\tU:\R^2\to\R^3$
$$Lu-V(x,y)u=f(x,y,u)\quad\hbox{with }u=\begin{pmatrix} U \\ \wt U \end{pmatrix}, \hbox{ for }(x,y)\in\R^2,$$
where $f(x,y,u)=\omega^2\chi(x,y, |u|^2)u$. Solving this equation we can obtain exact travelling wave solutions of the underlying nonlinear Maxwell equations. We are able to deal with super quadratic and subcritical focusing \-effects, e.g. in the Kerr-like materials with the nonlinear susceptibility of the form $\chi(x,y,\langle |E^2|\rangle E) = \chi^{(3)}(x,y)\langle |E|^2\rangle E$. A variational approach is presented for the semilinear problem. The energy functional associated with the equation is strongly indefinite, since $L$ contains an infinite dimensional kernel. The methods developed in this paper  may be applicable to other strongly indefinite elliptic problems and other nonlinear phenomena.
\end{abstract}

\maketitle

\section*{Introduction}
\setcounter{section}{1}

We are looking for travelling wave fields 
\begin{equation}\label{eq:travel_wave}
E(x,y,z,t)= U(x,y) \cos(kz+\omega t)+ \wt U(x,y)\sin(kz+\omega t)
\end{equation}
solving in the absence of charges and currents the Maxwell system $\nabla\times E+\partial_t B=0$ (Faraday's law), $\nabla\times B=\partial_t D$ (Amp\'{e}re's law) together with $\div D=0$ and $\div B=0$. We require the linear magnetic material law $B=\mu(x,y)H$ and the nonlinear electric material law $D=\eps(x,y) E+\chi(x,y, \langle |E|^2\rangle)E$ where $\langle |E(x,y,z)|^2\rangle = \frac{1}{T}\int_0^T |E(x,y,z,t)|^2\,dt$ is the average intensity of a time-harmonic electric field over one period $T=2\pi/\omega$. Taking the curl of Faraday's law and inserting the material laws for $B$ and $D$ together with Amp\'{e}re's law we find that $E$ has to satisfy the {\em nonlinear electromagnetic wave equation}
\begin{equation} \label{newe}
\nabla\times\Big(\frac{1}{\mu(x,y)}\nabla \times E\Big) + \partial_{tt}\left(\epsilon(x,y) E + \chi(x,y, \langle |E|^2\rangle)E\right)=0\hbox{ for }(x,y,z,t)\in\R^3\times\R.
\end{equation}
Here $U,\wt U:\R^2\to \R^3$ are the profiles of the travelling waves, $\omega>0$ is the temporal frequency and $k\in \R\setminus\{0\}$ the spatial wave number in the direction of propagation,
$\eps=\eps(x,y)$ is the permittivity of the medium, $\mu=\mu(x,y)$ is the magnetic permeability, and $\chi$ is the scalar nonlinear susceptibility which depends  on $(x,y)$ and on the time averaged intensity of $E$ only. 

Note that having solved the nonlinear electromagnetic wave equation, one obtains the electric displacement field $D$ directly from the constitutive relation
\begin{equation} \label{susceptibility}
D=\eps(x,y) E+\chi(x,y, \langle |E|^2\rangle)E.
\end{equation}
Moreover the magnetic induction
$B$ may be obtained by time integrating Faraday's law with divergence free initial conditions,  and finally the magnetic field $H$ is given by 
$H=\mu^{-1} B$.  Altogether, we find {\em exact propagation} of the electromagnetic field in the nonlinear medium according to the Maxwell equations  with the time-averaged material law \eqref{susceptibility},  see also \cite{Stuart91,Stuart:1993,FundPhotonics}.

In physics and mathematical literature there are several simplifications relying on approximations of  the nonlinear electromagnetic wave equation. The most prominent one is the scalar or vector nonlinear Schr\"odinger equation. For instance, one assumes that the term $\nabla(\div E)$ in $\curlop(\curlop E)= \nabla(\div E)-\Delta E$ is negligible and can be dropped, or one can use the so-called slowly varying envelope approximation. However, this approach may produce non-physical solutions; see e.g. \cite{Akhmediev-etal, Ciattoni-etal:2005} and references therein. Therefore,  in this paper, we are interested in exact travelling wave solutions of the Maxwell equations.

 We would like to mention that exact propagation of travelling waves of the nonlinear electromagnetic wave problem \eqref{newe} have been studied analytically so far only in cylindrically symmetric media. Namely, if $E$ is an axisymmetric $TE$-mode of the form 
\begin{equation*}
E(x,y,z,t)= U(x,y) \cos(kz+\omega t)
\end{equation*}
with 
\begin{equation}\label{eq:Symmetric}
U(x,y):=u(r)(-y/r,x/r,0),\hbox{ where }r=\sqrt{x^2+y^2}
\end{equation}
and the scalar function $u$ only depends on $r$, then solutions of \eqref{newe}  have been considered in a series of papers by Stuart and Zhou \cite{StuartZhou96,Stuart91,StuartZhou05,StuartZhou10,StuartZhou03,StuartZhou01,Stuart04} for asymptotically  constant  susceptibilities and by McLeod,  Stuart and Troy \cite{McLeodStuartTroy} for a cubic nonlinear polarization. Clearly, $E$ has the form of \eqref{eq:travel_wave} with $\tU=0$.
The search for these solutions reduces to a one-dimensional varia\-tional problem or an ODE for $u(r)$, which simplifies the problem considerably. However, if the medium is not necessarily cylindrically symmetric, then it is not clear how to find travelling waves \eqref{eq:travel_wave} with $\tU=0$ analytically and whether any variational approach can be provided with this constraint.

The aim of this work is to provide an analysis of the travelling waves of the general form \eqref{eq:travel_wave} propagating in media, which are not necessarily cylindrically symmetric. To the best of our knowledge it is the first analytical study of travelling waves of \eqref{newe}, i.e., the Maxwell system with material law \eqref{susceptibility}, where the medium is not supposed to be cylindrically symmetric. We present a variational approach which allows to treat \eqref{newe} and to find  ground states solutions of the problem with the least possible energy as well as infinitely many geometrically distinct bound states.  

Let us briefly comment on the problem of finding time-harmonic  standing wave solutions of \eqref{newe} of the form 
$$E(x,y,z,t)= U(x,y,z) \cos(\omega t).$$ 
This leads to the so-called  nonlinear {\em curl-curl problem} and has been recently studied e.g. in \cite{BartschMederski,BartschMederskiJFA} on a bounded domain and in \cite{BDPR,MederskiENZ,MederskiSchinoSzulkin} on $\R^3$, see also the survey \cite{BartschMederskiSurvey} and references therein. In the curl-curl problem, however,  $U$  is required to be localized in all space directions, i.e., it is supposed to
lie in some Lebesgue space over $\R^3$. Since travelling waves of the form \eqref{eq:travel_wave} are not localized, they have not been taken into account in these works. We would like to also mention that the study of time-harmonic standing waves in $\R^3$ in the nonsymmetric case  has been presented only in \cite{MederskiENZ,MederskiSchinoSzulkin}. The methods used there required assumptions about the vanishing properties of the permittivity $\eps$, or even $\eps=0$, and the double power behaviour of the nonlinear effect, e.g. $\chi(x,y,z,E)= \Gamma(x)\min\{|\langle E \rangle|^{p-2},|\langle E \rangle|^{q-2}\}$ with $2<p<6<q$. Note that $p=4$ corresponds to the Kerr-type effect, but only for sufficiently strong fields $|\langle E\rangle |>1$. In this work, however, we are able to treat the probably most common type of nonlinearity in the physics and engineering literature, the {\em Kerr nonlinearity}
\begin{equation*}
\chi(x,y,\langle |E^2|\rangle) E = \chi^{(3)}(x,y)\langle |E|^2\rangle E
\end{equation*}
and  we no longer require that the permittivity vanishes at infinity.

The search for travelling waves of the form \eqref{eq:travel_wave} leads to a new nonlinear elliptic problem. Namely, in order to solve the nonlinear  electromagnetic wave equation, we observe that
the profiles $U, \wt U$ of the travelling wave satisfy the elliptic problem 
\begin{equation}\label{eq_m}
L \begin{pmatrix} U \\ \wt U \end{pmatrix} -\omega^2\epsilon(x,y) \begin{pmatrix} U \\ \wt U \end{pmatrix}= \omega^2\chi\left(x,y, \frac{1}{2}(|U|^2+|\wt U|^2)\right) \begin{pmatrix} U \\ \wt U \end{pmatrix},
\end{equation}
where
$$ L = \begin{pmatrix} 
-\partial_{yy}+k^2 & \partial_{xy} & 0  & 0 & 0 & k\partial_x \\
\partial_{xy} & -\partial_{xx}+k^2 & 0 & 0 & 0 & k\partial_y \\
0 & 0 & -\partial_{xx}-\partial_{yy} & k\partial_x & k \partial_y & 0 \\
0 & 0 & -k\partial_x & -\partial_{yy}+k^2 & \partial_{xy} & 0  \\
0 & 0 & -k\partial_y & \partial_{xy} & -\partial_{xx}+k^2 & 0 \\
-k\partial_x & -k\partial_y & 0 & 0 & 0 &-\partial_{xx} - \partial_{yy} 
\end{pmatrix}.
$$
For simplicity we have assumed that the magnetic permeability is a constant given by $\mu=1$.
Let us define 
$$
\dom(L):= \left\{\begin{pmatrix} U \\ \wt U \end{pmatrix} \in L^2(\R^2)^6: L\begin{pmatrix} U \\ \wt U \end{pmatrix} \in L^2(\R^2)^6\right\}
$$
where $L((U,\wt U)^T)$ is defined in the sense of distributions.  One verifies that
 the second-order differential operator $L:\dom(L) \subset L^2(\R^2)^6 \to L^2(\R^3)^6$ is elliptic and self-adjoint, see Section~\ref{sec:varsetting} for details.
 
Our aim is to find solutions $u:\R^2\to\R^6$ to the following slight generalization of \eqref{eq_m} given by
\begin{equation}\label{eq}
Lu-V(x)u=f(x,u)\quad\hbox{for }x\in\R^2
\end{equation}
involving the operator $L$, where we assume $f(x,u)=\partial_u F(x,u)$.  From now on the space variable in $\R^2$ will be denoted by $x$ instead of $(x,y)$.

Observe that, if 
\begin{equation}\label{eq:application}
V(x)=\omega^2\eps(x),\quad F(x,u)=\omega^2\chi\Big(x,\frac{1}{2}|u|^2\Big),\quad x\in \R^2, u\in\R^6
\end{equation}
then \eqref{eq} leads to \eqref{eq_m} and we obtain the {\em exact} propagation of the travelling electromagnetic waves $(E,B)$, where $E$ is given by \eqref{eq:travel_wave} and $B$ is provided by Faraday's law and a subsequent time integration. On the other hand, \eqref{eq} is more general, since the nonlinear potential $F$ may depend on the direction of $u$ and not necessarily on $|u|=\sqrt{U^2+\wt U^2}$ as in anisotropic media.

Before we describe our results in more detail, let us comment on the main difference of the current work and previous works \cite{Stuart04,Stuart91,Stuart:1993,StuartZhou01,StuartZhou96,StuartZhou03} in cylindrically symmetric media. In these previous works the reduction of the curl-curl-operator to the Laplacian when acting on $E(x,y,z,t)=U(x,y)\cos(kz+\omega t)$ was possible by building-in the constraint $\div(U)=0$ into the ansatz as in \eqref{eq:Symmetric}. In our work we admit two profiles $U,\tilde U$ in the ansatz \eqref{eq:travel_wave} and thus reduce the curl-curl-operator to the operator $L$ given above. If we would obtain a one-profile solution $(U,0)$ of \eqref{eq_m} with $k\not =0$ then automatically $\div(U)=0$ follows. However, finding such a one-profile solution might be restricted to cylindrically symmetric media. Two-profile solutions like in our ansatz \eqref{eq:travel_wave} allow us to treat more general non-cylindrically symmetric media and a large class of nonlinearities.

The model nonlinearity which we have in mind is 
\begin{equation}\label{Ex:model}
F(x,u)=\frac{1}{p}\Gamma(x)|u|^p
\end{equation}
with $\Gamma\in L^{\infty}(\R^2)$ positive, bounded away from $0$ and $p>2$. In particular if $p=4$ we deal with the Kerr nonlinearity.

 We want to find weak solutions of \eqref{eq} which correspond to critical points of the following functional
\begin{equation}\label{eq:action}
J(u):=\frac12 b_L(u,u)
-\frac12\int_{\R^2}V(x)|u|^2\, d x- \int_{\R^2}F(x,u) \, dx
\end{equation}
defined on a Banach space $X\subset L^2(\R^2)^6\cap  L^p(\R^2)^6$ given later. Here $b_L(\cdot,\cdot)$ is the bilinear form associated with $L$ such that $b_L(u,\vp)= \int_{\R^2}\langle Lu,\vp\rangle\, dx$ for all $u\in \dom(L)$ and all $\vp\in \cC_0^\infty(\R^2)^6$.

Now let us enlist several difficulties underlying the problem.  For $\alpha,\talpha\in\cC_0^{\infty}(\R^2)$ and for $\beta,\tbeta\in \cC_0^\infty(\R^2)^3$
let us denote
\begin{equation*}
\begin{array}{ll} \cnabla \begin{pmatrix} \alpha\\ \talpha  \end{pmatrix}  := \begin{pmatrix} 
\partial_x \alpha \\
\partial_y \alpha \\
k \talpha \\
\partial_x \talpha \\
\partial_y \talpha \\
-k \alpha 
\end{pmatrix} :\R^2\to\R^6,  \qquad
\cnabla\times \begin{pmatrix} \beta\\ \tbeta  \end{pmatrix} := \begin{pmatrix}
\partial_y \beta_3- k\tbeta_2 \\
k\tbeta_1-\partial_x \beta_3 \\
\partial_x\beta_2-\partial_y \beta_1 \\
\partial_y\tbeta_3+k\beta_2 \\
-k\beta_1-\partial_x\tbeta_3\\
\partial_x\tbeta_2-\partial_y\tbeta_1
\end{pmatrix} :\R^2\to\R^6
\end{array}
\end{equation*}
and observe that $L=\cnabla\times\cnabla\times$, $\cnabla\times\cnabla=0$ and thus 
$w:=\cnabla \begin{pmatrix} \alpha\\ \talpha  \end{pmatrix}\in \ker(L)$. Moreover $J$ may be unbounded from above and from below and its critical points may have infinite Morse index. This is due to the infinite-dimensional kernel of $L$. 
In addition to these problems related to the strongly indefinite geometry of $J'$, we also have to deal with the lack of compactness issues. Namely, the functional $J$ is not (sequentially) weak-to-weak$^*$ continuous, i.e. the weak convergence $u_n\weakto u$ in $X$ does not imply that $J'(u_n)\weakto J'(u)$ in $X^*$.
In particular, if $J'(u_n)\to 0$, we do not know whether $u$ is a critical point of $J$ and solves \eqref{eq}.

In Theorem \ref{th:L} we show that the spectrum $\sigma(L)=\{0\}\cup [k^2,\infty)$, where $0$ is an eigenvalue of infinite multiplicity and $[k^2,\infty)$ consists of absolutely continuous spectrum. This allows us to consider the following general assumptions.

\begin{itemize}
	\item[(V)] $V\in L^{\infty}(\R^2)$ and $0<\essinf V\leq\esssup V<k^2$. Moreover there is $V_0\in \R$ such that $V-V_0\in L^{\frac{p}{p-2}}(\R^2)$ for some $p>2$.
	\item[(F1)] $F:\R^2\times\R^2\to [0,\infty)$ is differentiable with respect to the second variable $u\in\R^2$, and $f=\pa_uF:\R^2\times\R^2\to\R^2$ is measurable in $x\in\R^2$, continuous in $u\in\R^2$ for a.e. $x\in\R^2$. Moreover $f$ is $\Z^2$-periodic in $x$ i.e. $f(x,u)=f(x+y,u)$ for $x,u\in \R^2$, $y\in\Z^2$.
	\item[(F2)] $|f(x,u)|=o(1)$ as $u\to 0$ uniformly in $x\in\R^2$.
	\item[(F3)] There are $p>2$ and $c_1>0$ such that
		$$|f(x,u)|\le c_1(1+|u|^{p-1})\quad\hbox{for all  }u\in\R^2\hbox{ and a.e. }x\in\R^2$$
	\item[(F4)] There is $c_2>0$ such that
	$$\liminf_{|u|\to\infty}F(x,u)/|u|^p\geq c_2\quad\hbox{for a.e. }x\in\R^2.$$
\end{itemize}

Assumptions (V), (F1)--(F3) are sufficient to show that $J$ is of class $\cC^1$. (F4) provides a lower bound estimate for large $|u|$, however we will need also its stronger variant
\begin{itemize}
	\item[(F4')] There is $c_2>0$ such that
$$F(x,u)\geq c_2 |u|^p\quad\hbox{for all  }u\in\R^2\hbox{ and a.e. }x\in\R^2.$$
\end{itemize}

In order to deal with ground states one has to assume the following  assumption
\begin{itemize}
	\item[(F5)] If $ \langle f(x,u),v\rangle = \langle f(x,v),u\rangle >0$, then
	$\ \displaystyle F(x,u) - F(x,v)
	\le \frac{\langle f(x,u),u\rangle^2-\langle f(x,u),v\rangle^2}{2\langle f(x,u),u\rangle}$.\\
	Moreover $\langle f(x,u),u\rangle\geq 2F(x,u)$ for every $x\in\R^2$ and $u\in\R^2$.
\end{itemize}
Condition (F5) was introduced in \cite{BartschMederski,MederskiSystem}. Conditions (F4) and (F5) imply in particular the convexity of $F$ in $u$, cf. Remark~\ref{rem:F5}. Observe that if $F$ is isotropic in $u$, i.e. $F(x,u)=\chi(x,\frac12|u|^2)$, $\chi(x,s)\geq 0$, $\partial_s\chi(x,s)$ is nondecreasing and $\chi(x,0)=0$, then (F5) is satisfied. In general, (F5) does not imply the Ambrosetti-Rabinowitz condition (F6) given below, cf. \cite{AR}. For instance, if we take $0<a<b$ then we may find a function $\chi$ of class $\cC^1$ such that $\partial_s\chi(s)=s^{\frac{p-2}{2}}$ for $0\leq s<a$, $\partial_s\chi(s)$ is constant for $s\in [a,b]$, and $\partial_s\chi(s)=cs^{\frac{p-2}{2}}$ for $s>b$ and a suitable $c>0$. Then (F1)--(F5)  are satisfied, but the Ambrosetti-Rabinowitz condition (F6) does not hold. Note also that if $f$ and $\wt f$ satisfy (F1)--(F4), then also $\alpha f+\beta \wt f$ satisfies (F1)--(F4) for $\alpha,\beta>0$. It is not clear if (F5) considered alone has this positive additivity property, however similarly as in \cite[Remark 3.3 (b)]{MederskiSystem} we check that if $f$ and $\wt f$ satisfy (F1)--(F5) simultaneously, then $\alpha f+\beta \wt f$ satisfies the same assumptions. 

\medskip

The first main result reads as follows.
\begin{Th}\label{th:main} Suppose that  (V) and (F1)--(F5) hold.
	If $V=V_0$, or $V(x)>V_0$ for a.e. $x\in\R^3$ and (F4') holds, then there is a nontrivial solution to \eqref{eq} of the form $u_0=v+w$ with $v\in H^1(\R^2)^2\setminus\{0\}$ and $w\in L^2(\R^2)^2\cap L^p(\R^2)^2$ such that $Lw=0$. Moreover $u_0$ is a ground state solution, i.e. $J(u_0)=\inf_{\cN}J>0$, where
		\begin{equation*}
		\cN:=\Big\{u\in X\setminus\{0\}: J'(u)(u)=0\hbox{ and } J'(u)\Big(\cnabla \begin{pmatrix} \alpha\\ \talpha  \end{pmatrix} \Big)=0 \hbox{ for any }\alpha,\talpha\in\cC_0^{\infty}(\R^2)\Big\}.
	\end{equation*}
If, in addition, $f$ is odd in $u$ and $V=V_0$, then there is an infinite sequence $(u_n)$ of $\Z^2$-distinct solutions, i.e. $(\Z^2\ast u_n)\cap (\Z^2\ast u_m)=\emptyset$ for $n\neq m$, where
$\Z^2\ast u:=\{u(\cdot+z):z\in\Z^2\}$ for $u\in X$.
\end{Th}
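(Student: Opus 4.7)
The plan is to combine a Nehari--Pankov type manifold construction with a concentration--compactness argument exploiting the $\Z^2$-periodicity. The key structural input is the spectral result of Theorem~\ref{th:L}: since $\sigma(L)=\{0\}\cup[k^2,\infty)$ and $\esssup V<k^2$, the quadratic form $q(u):=b_L(u,u)-\int V|u|^2\,dx$ is positive semidefinite on $X$ with kernel exactly $X^0:=\ker(L)\cap X$, consisting of the gradient-like fields $\cnabla\binom{\alpha}{\talpha}$. Decomposing $X=X^+\oplus X^0$ where $X^+$ is the orthogonal complement (with respect to a suitable inner product) on which $q$ is positive definite, I would first verify that $J\in\cC^1(X,\R)$ under (V), (F1)--(F3), and that $X$ embeds continuously in $L^p(\R^2)^6$.

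\emph{Ground state via the Nehari--Pankov manifold.} The set $\cN$ is precisely the Pankov manifold constructed relative to the splitting $X^+\oplus X^0$. The crucial geometric fact, which is where condition (F5) enters, is that for every $v\in X^+\setminus\{0\}$ the map
\begin{equation*}
(t,w)\in (0,\infty)\times X^0\ \mapsto\ J(tv+w)
\end{equation*}
admits a unique strict maximum point, and this maximum lies on $\cN$. This is the Bartsch--Mederski abstract lemma: (F5) together with (F4) makes $F$ convex and provides the required monotonicity $t\mapsto J(tv+w)/t^2$ that rules out degenerate ridges. Thus the minimax value $c:=\inf_{\cN}J$ coincides with $\inf_{v\in X^+\setminus\{0\}}\max_{t\geq 0,\,w\in X^0}J(tv+w)$, and (F2)--(F3) together with the positive definiteness of $q$ on $X^+$ give $c>0$.

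\emph{Compactness.} Taking a minimizing sequence $(u_n)\subset\cN$, a standard estimate using (F5) (specifically $\langle f(x,u),u\rangle\geq 2F(x,u)$) yields boundedness of $(u_n)$ in $X$. The lack of compactness is the main obstacle: $J'$ is not weak-to-weak$^*$ continuous because of the nonlocal structure induced by the infinite-dimensional kernel. I would handle it in two steps. First, a Lions-type nonvanishing lemma shows that, up to translations $u_n(\cdot+z_n)$ with $z_n\in\Z^2$ (using the $\Z^2$-periodicity of $V$ and $f$ from (F1) together with $V=V_0$ or (F4$'$) in the nonconstant-$V$ case), the shifted sequence does not vanish in $L^p$. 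Second, passing to a weak limit $u_0\neq 0$, the periodicity guarantees $J'(u_0)=0$ when tested against $\cC_0^\infty$ fields; the test against kernel elements $\cnabla\binom{\alpha}{\talpha}$ is justified because the quadratic part of $J'$ annihilates the kernel. Finally, the semicontinuity inequality coming from (F5), in the sharp form stated in (F5), shows $J(u_0)\leq \liminf J(u_n)=c$, while $u_0\in\cN$ forces $J(u_0)\geq c$. This yields the ground state $u_0=v+w$ with $v\in X^+\subset H^1(\R^2)^2\setminus\{0\}$ and $w\in X^0$.

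\emph{Multiplicity.} Assume $V=V_0$ and $f$ odd. Then $J$ is even and $\Z^2$-invariant. I would follow the equivariant approach of Bartsch--Mederski / Szulkin--Weth: define, on the unit sphere $S^+$ of $X^+$, the functional $I(v):=\max_{t\geq 0, w\in X^0}J(tv+w)$ which is of class $\cC^1$ and whose critical points correspond to solutions on $\cN$. Applying a $\Z/2$-equivariant minimax (Ljusternik--Schnirelman values built from the Krasnoselskii genus) gives a sequence of critical values $c_n\to\infty$. The discreteness of the $\Z^2$-orbits comes from an argument by contradiction: if the solutions were not $\Z^2$-distinct, concentration-compactness combined with the energy levels $c_n\to\infty$ would force compactness modulo $\Z^2$, and a standard gluing/translation argument produces genuinely distinct orbits at each level. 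The hardest technical step is still the verification of a Palais--Smale type condition on $\cN$ modulo $\Z^2$-translations, which is done by iterating the nonvanishing/translation step used for the ground state and splitting off finitely many $\Z^2$-translates of bound states, exactly as in the periodic Schrödinger setting.
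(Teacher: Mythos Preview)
Your outline follows the same high-level architecture as the paper (Nehari--Pankov manifold on the splitting $X=X^+\oplus X^0$, concentration--compactness via $\Z^2$-periodicity, equivariant min-max for multiplicity), but two steps that you label ``standard'' are precisely the places where this problem is \emph{not} standard, and your proposal does not close them.

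\textbf{Weak-to-weak$^*$ continuity of $J'$.} You correctly flag that $J'$ is not weak-to-weak$^*$ continuous, but then assert that ``periodicity guarantees $J'(u_0)=0$ when tested against $\cC_0^\infty$ fields''. This is where the argument breaks. The kernel component lies only in $\cW\subset L^2\cap L^p$, and weak convergence there gives neither local $L^p$-convergence nor a.e.\ convergence, so you cannot pass $\int\langle f(x,u_n),\varphi\rangle\,dx$ to the limit by the usual Rellich/Vitali route. The paper's resolution is a substantial profile decomposition \emph{on the constraint} $\cM=\{u:J'(u)|_{X^0}=0\}$ (Theorem~\ref{ThSplit}): because $\cM$ forces $w=w(v)$ to be determined by the $H^1$-part $v$, one can upgrade weak convergence on $\cM$ to $L^p_{\mathrm{loc}}$- and a.e.-convergence (Theorem~\ref{ThSplit}(c)), and only then does $J'(u_n)\to 0$ imply $J'(\tu_0)=0$ (Corollary~\ref{Cor:weak}). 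Your proposal never produces a.e.\ convergence of the $\cW$-part.

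\textbf{Boundedness of the (Cerami) sequence.} The inequality $\langle f(x,u),u\rangle\geq 2F(x,u)$ gives only $J(u)-\tfrac12 J'(u)(u)\geq 0$; it does not by itself bound $\|u_n\|$ in a strongly indefinite setting where the quadratic form is merely semidefinite. The paper's Lemma~\ref{lem:bounded} is, as the authors stress, ``considerably nonstandard'': one renormalises by an auxiliary quantity $s_n$ built from \emph{both} $\|v_n\|$ and $|w_n|_2,|u_n|_p$, and then uses the key comparison inequality $J(u)\geq J(t(u+\psi))+J'(u)\bigl(\tfrac{1-t^2}{2}u-t^2\psi\bigr)$ (Lemma~\ref{ineq:Nehari}, which is where (F5) is really exploited) to rule out $\tv_n\to 0$ in $L^p$. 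Nothing in your sketch supplies this.

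A smaller omission: in the case $V(x)>V_0$ with (F4$'$), shifting by $\Z^2$ is not available for $J$ itself, and one must compare with the periodic limit functional $J_0$. The paper shows $\inf_{\cN_0}J_0>\inf_{\cN}J$ via Lemma~\ref{ineq:Nehari}, then uses the profile decomposition to force $K=0$; your proposal only says ``using (F4$'$)'' without indicating this comparison.
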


Note that $\cN$ contains all nontrivial critical points and $\cN$ is contained in the classical Nehari manifold $\{u\in X\setminus\{0\}: J'(u)(u)=0\}$, see \cite{Nehari2} and Section \ref{sec:criticaslpoitth} for further properties. Condition (F5) is important  to obtain a bounded Palais-Smale sequence for $J$ at level  $\inf_{\cN} J$. However it is not clear how to obtain a bounded Palais-Smale sequence for $J$ under (F1)--(F4) and the classical Ambrosetti-Rabinowitz condition
\begin{itemize}
	\item[(F6)] There is $\gamma>2$ such that for every $u\in\R^2$ and a.e. $x\in\R^2$
	$$\langle f(x,u),u\rangle\geq \gamma F(x,u).$$
\end{itemize}
Instead, we need to consider (F4') together with (F6) in order to prove the boundedness of Palais-Smale sequences,  see Lemmas \ref{ineq:AR} and \ref{lem:bounded}.

\begin{Th}\label{th:main2} Suppose that $F$ is convex in $u\in\R^2$, (F1)--(F3), (F4'), (F6) and that (V) holds for a constant function $V$, i.e., $V(x)=V_0$ with $0<V_0<k^2$. Then there is a nontrivial solution to \eqref{eq} of the form $u_0=v+w$ with $v\in H^1(\R^2)^2\setminus\{0\}$ and $w\in L^2(\R^2)^2\cap L^p(\R^2)^2$ such that $Lw=0$. Moreover $u_0$ is a least energy solution, i.e. $J(u_0)=c_0$, where
$$c_0:=\inf\big\{J(u): J'(u)=0\hbox{ and }u\in X\setminus\{0\}\big\}>0.$$
If, in addition, $f$ is odd in $u$, then there is an infinite sequence $(u_n)$ of $\Z^2$-distinct solutions.
\end{Th}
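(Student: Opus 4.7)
The plan is to exploit the convexity of $F$ in $u$ to perform a saddle reduction of $J$ over the infinite-dimensional subspace $\ker L$, thereby converting the strongly indefinite variational problem into a genuine mountain-pass problem on a definite subspace, and then to recover compactness via the $\Z^2$-translation invariance inherited from the hypotheses $V\equiv V_0$ and $f$ $\Z^2$-periodic. Concretely, using Theorem~\ref{th:L} and the assumption $0<V_0<k^2$, decompose $X=Y\oplus \widetilde X$ where $\widetilde X:=\overline{\{\cnabla(\alpha,\talpha)^T:\alpha,\talpha\in\cC_0^\infty(\R^2)\}}\cap X$ is contained in $\ker L$, and $Y$ corresponds to the absolutely continuous part where the quadratic form $Q(u):=\tfrac12 b_L(u,u)-\tfrac{V_0}{2}\|u\|_2^2$ is positive definite; on $\widetilde X$ the form equals $-\tfrac{V_0}{2}\|\cdot\|_2^2$, hence is strictly negative definite.

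Next, for each fixed $v\in Y$, the map $w\mapsto J(v+w)$ on $\widetilde X$ is strictly concave (convex $F$ produces a concave $-\int F$, and the quadratic part is strictly concave on $\widetilde X$) and tends to $-\infty$ as $\|w\|\to\infty$ by (F4'); hence it admits a unique maximizer $w(v)\in\widetilde X$, depending continuously on $v$, and the reduced functional $\widetilde J(v):=J(v+w(v))=\max_{w\in\widetilde X} J(v+w)$ is of class $\cC^1$ with critical points in one-to-one correspondence with those of $J$. Assumptions (F2)–(F3) together with positive definiteness of $Q$ on $Y$ yield $\widetilde J(v)\ge\alpha>0$ on some sphere $\|v\|=r$, while (F4') furnishes a $v_0\in Y$ along which $\widetilde J(tv_0)\to-\infty$, so $\widetilde J$ has mountain-pass geometry and the Ambrosetti–Rabinowitz Lemmas~\ref{ineq:AR} and~\ref{lem:bounded} — whose use is exactly what (F6) and (F4') are tailored for — guarantee boundedness in $X$ of the corresponding Palais–Smale sequence $u_n=v_n+w(v_n)$.

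Because the problem is invariant under the $\Z^2$-action $u\mapsto u(\cdot-z)$, one runs a Lions-type concentration argument: either $\sup_{z\in\R^2}\int_{B_1(z)}|u_n|^p\,dx\to 0$, in which case $u_n\to 0$ in $L^p$ and the Palais–Smale value collapses to zero, contradicting the mountain-pass level $c>0$; or, after translating $u_n$ by suitable $z_n\in\Z^2$, we extract a nontrivial weak limit $u_0=v_0+w_0\in X\setminus\{0\}$ with $v_0\in H^1(\R^2)^2\setminus\{0\}$ and $w_0=w(v_0)\in\ker L$. The periodicity of $f$ in $x$ and the $L^p_{\rm loc}$-convergence along the translated sequence allow one to pass $J'(u_n)\to 0$ through the weak limit and conclude that $u_0$ is a critical point of $J$. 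The least-energy property $J(u_0)=c_0$ is obtained by applying the same concentration procedure to a minimizing sequence of nontrivial critical points of $J$; the uniform lower bound $c_0\ge\alpha>0$ is automatic from the mountain-pass geometry of $\widetilde J$.

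For the multiplicity claim with $f$ odd, $\widetilde J$ is even and a symmetric mountain-pass / Fountain-type argument on the positive-definite subspace $Y$ produces an unbounded sequence of critical values $(c_n)$; applying the concentration analysis to each of the associated Palais–Smale sequences, and factoring out the $\Z^2$-action in the spirit of Bartsch–Séré, separates the resulting solutions into infinitely many distinct $\Z^2$-orbits. The principal obstacle is the fourth step: in dimension two the kernel component $w(v_n)$ has no uniform $H^1$ bound — only an $L^2\cap L^p$ bound coming from boundedness of $u_n$ in $X$ — so the concentration-compactness reasoning for the $v_n$ part and the recovery of the critical-point identity for $w_0$ must be carried out simultaneously; the weak convergence $u_n\weakto u_0$ does \emph{not} imply $J'(u_n)\weakto J'(u_0)$, so the local $L^p$-convergence afforded by translations and the monotone characterization of $w(v)$ as the unique maximizer are both indispensable for closing the argument.
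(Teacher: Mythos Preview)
Your outline is essentially the paper's own strategy: the saddle reduction onto the constraint $\cM=\{v+w(v)\}$ (your $\widetilde J$ is the paper's $\wt J=J\circ m$), the mountain-pass geometry on the reduced functional, the boundedness of Cerami sequences via Lemmas~\ref{ineq:AR} and~\ref{lem:bounded}, compactness modulo $\Z^2$-translations, and then minimization over the set of nontrivial critical points all match. The multiplicity argument in the paper also goes through the reduced functional and a Benci pseudoindex under the discrete Cerami condition $(M)_0^\beta$, which is in the same spirit as what you sketch.

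One point needs correcting. Your Lions dichotomy is phrased for $u_n$: ``either $\sup_{z}\int_{B_1(z)}|u_n|^p\to 0$, in which case $u_n\to 0$ in $L^p$''. That implication fails, because the kernel component $w_n$ carries no gradient bound---vanishing of local $L^p$-mass does \emph{not} force $L^p$-smallness for sequences that are merely bounded in $L^2\cap L^p$. The paper runs the Lions/profile decomposition on the $H^1$-part $v_n$ only, and then uses the strict convexity of $I$ on $\cW$ together with $u_n\in\cM$ to prove that $w(v_n)(\cdot+y_n^i)\chi_{B(0,R_n)}\to w_0(\tv_i)$ in $L^2\cap L^p$ (Theorem~\ref{ThSplit}, Claim~5). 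This is precisely the mechanism that delivers the local $L^p$-convergence of the full $u_n$ and hence the weak-to-weak$^*$ continuity of $J'$ along sequences in $\cM$ (Corollary~\ref{Cor:weak}). You correctly flag this as the main obstacle in your closing paragraph, but the earlier dichotomy should be stated for $v_n$, with the $w$-part recovered afterwards from the constraint.
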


We show that if $u\in X$ solves \eqref{eq}, then the {\em total electromagnetic energy} per unit interval in $x_3$-direction given by
 \begin{eqnarray}\label{eq:EM_Energy}
 \cL(t):=\frac12\int_{\R^2}\int_{a}^{a+1}\langle E,D\rangle +\langle B,H\rangle\,dx_3\, d(x_1, x_2)
 \end{eqnarray}
 is finite; see Corollary \ref{Prop_energy}. We do not know, however,  whether the fields $E$, $D$, $B$ and $H$ are localized, i.e. decay to zero as $|(x_1,x_2)|\to\infty$, however $X$ lies in $L^2(\R^2)^6\cap L^p(\R^2)^6$. Therefore $E$ has a weaker decay property and cannot travel in the $(x_1,x_2)$-plane. The finiteness of the total electromagnetic energy and the localization problem attract a strong attention in the study of self-guided beams of light in nonlinear media; see e.g. \cite{Stuart91,Stuart:1993}. 
 
 The first crucial step in our approach is to build the functional and variational setting for the new operator $L$ and the problem \eqref{eq}, which will be demonstrated in the next Section~\ref{sec:varsetting}. Next we recognize the strongly indefinite nature of \eqref{eq:action} and show that it is of the form of the critical point theory presented in \cite{BartschMederski,BartschMederskiJFA,MederskiSchinoSzulkin} and built for curl-curl problems. However, we work in a different functional setting and under a different set of assumptions, so that we have to slightly refine the results, in particular we do not always assume condition (I8) given in Section~\ref{sec:criticaslpoitth}. In Section \ref{sec:PS} we deal with the lack of compactness issue, in particular with the lack of  weak-to-weak$^*$ continuity of $J'$. By means of a profile decomposition result (Theorem \ref{ThSplit}), we are able to prove this regularity in some weakly closed topological constraint $\cM\subset X$, so that a weak limit point of a bounded Palais-Smale sequence of $\cM$ is a critical point of $J$. In the last Section \ref{sec:proof} we show that a variant of Cerami's condition holds. In particular we show that any Cerami sequence is bounded, and we emphasize that the proof the boundedness is considerably nonstandard and not straightforward, even if (F6) holds.
 Some technical inequalities have to be worked out, see details in Lemma \ref{ineq:AR} and Lemma \ref{lem:bounded}. Finally we complete the proof of Theorems \ref{th:main} and \ref{th:main2}.

\section{Variational setting}\label{sec:varsetting}
We introduce the following notation. If $u=\begin{pmatrix} U\\ \tU \end{pmatrix}\in\R^6$, then $|u|:=\big(|U|^2+|\tU|^2\big)^{1/2}$ and $\langle\cdot,\cdot \rangle$ denotes the Euclidean inner product in $\R^N$, $N\geq 1$. In the sequel $\langle\cdot,\cdot\rangle_2$ denotes the inner product in $L^2(\R^2)^6$ and $|\cdot|_q$ denotes the usual $L^q$-norm for $q\in [1,+\infty]$. Furthermore we denote by $C$ a generic positive constant which may vary from one inequality to the next. We always assume that $k\neq 0$.

Let us introduce the following space
\begin{eqnarray*}
	\cV&:=&\Big\{u=\begin{pmatrix} U \\ \wt U \end{pmatrix} \in H^1(\R^2)^6: \Big\langle u,\cnabla \begin{pmatrix} \alpha\\ \talpha  \end{pmatrix}\Big\rangle_2=0 \mbox{ for any }\alpha,\talpha\in\cC_0^{\infty}(\R^2)\Big\}\\
	&=&\Big\{u=\begin{pmatrix} U \\ \wt U \end{pmatrix} \in H^1(\R^2)^6: \partial_{x_1} u_1 +\partial_{x_2} u_2 +k \tu_3=0, \partial_{x_1} \tu_1 +\partial_{x_2} \tu_2 -ku_3=0 \mbox{ a.e. in }\R^2\Big\}
\end{eqnarray*}
and note that it is a closed subspace of $H^1(\R^2)^6$. Let us consider the following norm in  $\cV$
$$\|u\|:=\Big(\sum_{i=1}^3|\nabla u_i|^2_2+k^2|u_i|^2_2+|\nabla \tu_i|^2_2+k^2|\tu_i|^2_2\Big)^{1/2}$$
which is equivalent to the standard $H^1$-norm. Let $\cW$ be the completion of 
vector fields $w=\cnabla\begin{pmatrix} \Phi\\ \tPhi \end{pmatrix}$, $\Phi,\tPhi\in\cC_0^{\infty}(\R^2)^3$ with respect to the following norm
$$\|w\|:=\big(|w|_2^2+|w|_p^2\big)^{1/2}$$
so that $\cW\subset L^2(\R^2)^6\cap L^p(\R^2)^6$.
Note that $\cV\cap\cW=\{0\}$ and we may define a norm on $\cV\oplus\cW$ as follows
$$\|v+w\|=\|v\|+\|w\|,\quad v\in\cV, w\in\cW.$$

\begin{Th}\label{eq:Helmholtz}
The spaces $\cV$ and $\cW$ are closed subspaces of $L^2(\R^2)^6$ and orthogonal with respect to $\langle \cdot,\cdot\rangle_2$ and $X:=\cV\oplus\cW$ is the completion of  $\cC_0^{\infty}(\R^2)^6$ with respect to the norm $\|\cdot\|$.
\end{Th}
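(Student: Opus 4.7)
The plan is to verify four claims in sequence: orthogonality of $\cV$ and $\cW$ in $L^2$, closedness of each in its natural topology, a Helmholtz-type decomposition of any $u\in\cC_0^{\infty}(\R^2)^6$, and density of $\cC_0^{\infty}(\R^2)^6$ in the direct sum under $\|\cdot\|$.

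For orthogonality, if $v\in\cV$ and $w_0=\cnabla\begin{pmatrix}\alpha\\\talpha\end{pmatrix}$ with $\alpha,\talpha\in\cC_0^{\infty}(\R^2)$, then $\langle v,w_0\rangle_2=0$ by the defining condition of $\cV$. Since $\cW$ is the completion of such gradients in the norm $(|\cdot|_2^2+|\cdot|_p^2)^{1/2}$, which dominates $|\cdot|_2$, continuity of $\langle v,\cdot\rangle_2$ propagates the vanishing to all of $\cW$. This also yields $\cV\cap\cW=\{0\}$, so the algebraic sum is direct. Closedness of $\cV$ in $H^1(\R^2)^6$ is immediate as an intersection of kernels of continuous linear functionals, and $\cW$ is complete by construction.

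Next I would build the decomposition. Given $u=(u_1,u_2,u_3,\tu_1,\tu_2,\tu_3)\in\cC_0^{\infty}(\R^2)^6$, set $g:=-(\partial_{x_1}u_1+\partial_{x_2}u_2+k\tu_3)$ and $\tilde g:=ku_3-\partial_{x_1}\tu_1-\partial_{x_2}\tu_2$, and solve the two scalar Helmholtz equations $(-\Delta+k^2)\phi=g$ and $(-\Delta+k^2)\tphi=\tilde g$ on $\R^2$ by convolution with the modified Bessel kernel $(2\pi)^{-1}K_0(|k|\cdot)$, obtaining $\phi,\tphi\in\cC^{\infty}(\R^2)\cap H^s(\R^2)$ for every $s\ge 0$ with exponential decay. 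Writing $w:=\cnabla\begin{pmatrix}\phi\\\tphi\end{pmatrix}$ and $v:=u-w$, the explicit form of $\cnabla$ yields $\partial_{x_1}w_1+\partial_{x_2}w_2+k\tilde w_3=\Delta\phi-k^2\phi=-g$ and an analogous identity for the second constraint, so $v\in\cV$. The 2D Sobolev embedding $H^1(\R^2)\hookrightarrow L^q(\R^2)$ for $q\in[2,\infty)$ places $w\in L^2\cap L^p$, and approximating $\phi,\tphi$ in $H^2$ by $\cC_0^{\infty}$ scalars produces a sequence of smooth compactly supported gradients converging to $w$ in $L^2\cap L^p$, so $w\in\cW$.

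For density, every $w\in\cW$ is by definition a $\|\cdot\|_{L^2\cap L^p}$-limit of $\cnabla\begin{pmatrix}\phi_n\\\tphi_n\end{pmatrix}$ with $\phi_n,\tphi_n\in\cC_0^{\infty}(\R^2)$, each of which already lies in $\cC_0^{\infty}(\R^2)^6$. For $v\in\cV$, the standard truncation-mollification $u_n:=\chi_n(v*\rho_{1/n})\in\cC_0^{\infty}(\R^2)^6$ converges to $v$ in $H^1$; applying the Helmholtz construction to $u_n$ gives $u_n=\bar v_n+\bar w_n$ where the inhomogeneities $g_n,\tilde g_n$ tend to zero in $L^2$ (they vanish identically on $v$ itself since $v\in\cV$), so boundedness of $(-\Delta+k^2)^{-1}:L^2\to H^2$ forces $\bar w_n\to 0$ in $H^1\hookrightarrow L^2\cap L^p$ and hence $\bar v_n\to v$ in $H^1$. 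Consequently $\|u_n-v\|\to 0$ in $X$, which together with the analogous approximation in $\cW$ identifies $X$ with the $\|\cdot\|$-completion of $\cC_0^{\infty}(\R^2)^6$. I expect the most delicate point to be this last density argument, because the two divergence-like constraints defining $\cV$ couple the third scalar component of each triple to the divergence of the other through the wavenumber $k$, so bare mollification-truncation breaks the constraint in a mixed way and one must quantitatively control $\bar w_n$ through $(-\Delta+k^2)^{-1}$ and the commutator $[\chi_n,\nabla]$ to show the continuity of the decomposition along the approximating sequence.
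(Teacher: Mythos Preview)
Your proof is correct and follows the same overall strategy as the paper: both reduce the Helmholtz decomposition of $\varphi\in\cC_0^{\infty}(\R^2)^6$ to solving $(-\Delta+k^2)\phi=g$, $(-\Delta+k^2)\tphi=\tilde g$ via Bessel potentials, then set $w=\cnabla\begin{pmatrix}\phi\\\tphi\end{pmatrix}$ and $v=\varphi-w$.

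The one genuine difference is in the density step for $\cV$. The paper takes $\varphi_n\to v$ in $H^1$, decomposes $\varphi_n=\varphi_\cV^n+\varphi_\cW^n$, and argues that $\varphi_\cW^n\to 0$ in $\cW$ via an abstract projection: since $(\cl_{L^2\cap L^p}\cV)\cap\cW=\{0\}$, the projection of the direct sum onto $\cW$ is continuous in $L^2\cap L^p$. Your route is more quantitative: you observe that the inhomogeneities $g_n,\tilde g_n$ are exactly the defect of $\varphi_n$ from satisfying the two constraints, hence tend to $0$ in $L^2$ because $\varphi_n\to v\in\cV$ in $H^1$, and then the bounded operator $(-\Delta+k^2)^{-1}:L^2\to H^2$ forces $\bar w_n\to 0$ in $H^1\hookrightarrow L^2\cap L^p$. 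This is cleaner and entirely self-contained; the paper's projection argument implicitly relies on the direct sum being topological (closed) in $L^2\cap L^p$, which is true but not justified there. Your worry about the commutator $[\chi_n,\nabla]$ is unnecessary: all you use is $\varphi_n\to v$ in $H^1$, which any standard $\cC_0^\infty$ approximation provides, and the constraint defects $g_n,\tilde g_n\to 0$ then follow immediately from $H^1$-convergence of all six components.
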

\begin{proof}
It is clear that $\cV$ and $\cW$ are closed subspaces of $L^2(\R^2)^6$ and orthogonal with respect to $\langle \cdot,\cdot\rangle_2$. Let  $\vp=\begin{pmatrix} \Phi\\ \tPhi \end{pmatrix}\in\cC_0^{\infty}(\R^2)^6$ and let $\alpha, \talpha\in H^{1}(\R^2)\cap \cC^{\infty}(\R^2)$ be the unique solutions to 
$$-\Delta \alpha+k^2 \alpha=-\big(\partial_{x_1} \Phi_1 +\partial_{x_2} \Phi_2 +k \tPhi_3\big)$$
and
$$-\Delta \talpha+k^2 \talpha=-\big(\partial_{x_1} \tPhi_1 +\partial_{x_2} \tPhi_2 -k \Phi_3\big)$$
respectively. Since $\alpha$ is the Bessel potential of the $\cC_0^\infty$-function $-(\partial_{x_1} \Phi_1 +\partial_{x_2} \Phi_2 +k \tPhi_3)$ we find that $\alpha\in W^{l,s}(\R^2)$ for any $l\in \N$ and $1<s<\infty$, cf. \cite[Chapter V §3]{stein}. Arguing similarly we see that the same also holds for $ \talpha$. Taking standard mollifiers and applying a cutoff argument,  we find $ \alpha_n,\talpha_n\in \cC_0^{\infty}(\R^2)$ such that $\alpha_n\to\alpha$ and $\talpha_n\to\talpha$ in $W^{l,s}(\R^2)$ for any $l\in\N$ and $1<s<\infty$. In particular this implies
$$\vp_\cW:=\cnabla \begin{pmatrix} \alpha\\ \talpha  \end{pmatrix}=\lim_{n\to\infty}
\cnabla \begin{pmatrix} \alpha_n\\ \talpha_n  \end{pmatrix} \in\cW$$
and $\vp_\cW\in W^{l,s}(\R^2)$ for all $l\in \N$ and $1<s<\infty$. Moreover, $\cW$ is clearly contained in the completion of   $\cC_0^{\infty}(\R^2)^6$ with respect to the norm $\|\cdot\|$. Since in particular $\vp_\cW\in H^1(\R^2)^6$ we obtain by integration by parts 
\begin{eqnarray*}
	\Big\langle \vp, \cnabla \begin{pmatrix} \beta\\ \tbeta  \end{pmatrix}\Big\rangle
	&=&-\int_{\R^2} (\partial_{x_1} \Phi_1 +\partial_{x_2} \Phi_2 +k \tPhi_3)\beta+
	(\partial_{x_1} \tPhi_1 +\partial_{x_2} \tPhi_2 -k \Phi_3)\tbeta\,dx\\
	&=&
	\int_{\R^2} (-\Delta \alpha+k^2 \alpha)\beta+
	(-\Delta \talpha+k^2 \talpha)\tbeta\,dx\\
	&=&
	\Big\langle \vp_\cW, \cnabla \begin{pmatrix} \beta\\ \tbeta  \end{pmatrix}\Big\rangle
\end{eqnarray*}
for every $\beta,\tbeta\in\cC_0^{\infty}(\R^2)$. Therefore $\vp_\cV:=\vp-\vp_\cW\in\cV$ and
we obtain the following Helmholtz-type decomposition
$$\vp =\vp_\cV+\vp_\cW\quad\hbox{with }\vp_\cV\in\cV\hbox{ and }\vp_\cW\in\cW$$
and moreover we have shown that $\vp\in X$. It remains to show that also $\cV$ is the completion of $\cC_0^\infty(\R^2)^6$ with respect to $\|\cdot\|$. To see this, let $v\in\cV$ and take $(\vp_n)\subset \cC_0^{\infty}(\R^2)^6$
such that $\vp_n\to v$ in $H^1(\R^2)^6$. Let us decompose $\vp_n=\vp_\cV^n+\vp_\cW^n\in\cV\oplus\cW$ and let $\cl_{L^2\cap L^p}\cV$ denote the closure of $\cV$ in $L^2(\R^2)^6\cap L^p(\R^2)^6$. Observe that $\big(\cl_{L^2\cap L^p}\cV\big)\cap \cW=\{0\}$, so that there is a continuous $L^2(\R^2)^6\cap L^p(\R^2)^6$-projection of $\big(\cl_{L^2\cap L^p}\cV\big)\oplus \cW$ onto $\cW$. Since $\vp_n\to v$ in $L^2(\R^2)^6\cap L^p(\R^2)^6$, we infer that $\vp_\cW^n\to 0$ in $\cW$. Similarly, arguing with the closure of $\cW$ in $H^1(\R^2)^6$ we get  $\vp_\cV^n\to v$ in $\cV$. Therefore
$$\|v-\vp_n\|=\|v-\vp_\cV^n\|+\|\vp^n_\cW\|\to 0$$ as $n\to\infty$ and we conclude that $\cV\oplus\cW$ is the completion of  $\cC_0^{\infty}(\R^2)^6$ with respect to the norm $\|\cdot\|$.
\end{proof}

Now we investigate the differential operator $L$ and its spectrum in the following two result.

\begin{Prop} The second-order differential operator $L:\dom(L) \subset L^2(\R^2)^6 \to L^2(\R^3)^6$ is elliptic and self-adjoint on the domain $\dom(L) = \{u\in L^2(\R^2)^6 \mbox{ s.t. } Lu\in L^2(\R^2)^6\}$. Its associated bilinear form $b_L: \dom(b_L)\times\dom(b_L)\to \R$ is given by 
$$
b_L(u,v)= \int_{\R^2} \cnabla\times u \cdot \cnabla\times v\,dx
$$
with $\dom(b_L)= \{u\in L^2(\R^2)^6 \mbox{ s.t. } \cnabla\times u\in L^2(\R^2)^6\}$.
\end{Prop}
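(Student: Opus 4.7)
The key structural observation is that $L$ factors as $L = \cnabla\times\cnabla\times$. I would first verify this by a direct row-by-row computation: applying $\cnabla\times$ twice to a generic field $u = (U,\tU)^T$ and collecting terms, each of the six resulting components matches the corresponding row of the matrix defining $L$. For instance, the first component evaluates to $\partial_y(\partial_x U_2 - \partial_y U_1) - k(-kU_1 - \partial_x\tU_3) = (-\partial_{yy}+k^2)U_1 + \partial_{xy}U_2 + k\partial_x\tU_3$, matching row $1$; the other five rows are analogous. In particular the identity $\cnabla\times\cnabla = 0$ (used implicitly elsewhere in the paper to place $\cnabla(\alpha,\talpha)^T$ in $\ker L$) falls out at the same level.

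With the factorization established, a component-wise integration by parts shows that $\cnabla\times$ is formally skew-adjoint on $\cC_0^\infty(\R^2)^6$, so that a double integration by parts gives
$$
\int_{\R^2}\langle Lu,v\rangle\,dx = \int_{\R^2}\langle \cnabla\times u,\cnabla\times v\rangle\,dx \qquad\text{for }u,v\in\cC_0^\infty(\R^2)^6.
$$
This simultaneously identifies $b_L$ on smooth compactly supported fields and shows that $L$ is symmetric there. To upgrade to self-adjointness on $\dom(L) = \{u\in L^2(\R^2)^6:Lu\in L^2(\R^2)^6\}$, I would pass to the Fourier transform. Since $L$ has constant coefficients, $\cF$ conjugates $L$ to multiplication by the $6\times 6$ symbol matrix $M(\xi)$ obtained by substituting $\partial_j \mapsto i\xi_j$; the symmetry already proved on $\cC_0^\infty$ together with Plancherel forces $M(\xi)$ to be hermitian. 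Multiplication by a hermitian-matrix-valued measurable function is self-adjoint on its natural domain $\{\hat u\in L^2:M(\xi)\hat u(\xi)\in L^2\}$, which pulls back through $\cF$ exactly to $\dom(L)$. The bilinear form $b_L$ then extends from $\cC_0^\infty$ to $\dom(b_L) = \{u\in L^2:\cnabla\times u\in L^2\}$ by closedness of $\cnabla\times$ as a first-order operator on $L^2$.

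Finally I would address ellipticity. The principal symbol of $L$ is block-diagonal with two identical $3\times 3$ blocks equal to $|\xi|^2 I_3 - \widetilde\xi\widetilde\xi^T$, where $\widetilde\xi = (\xi_1,\xi_2,0)^T$; this is a positive semidefinite matrix with one-dimensional kernel spanned by $\widetilde\xi$. Thus $L$ is not elliptic in the Petrovsky sense but is elliptic in the degenerate \emph{curl-curl} sense standard in Maxwell theory, with principal-symbol kernel corresponding precisely to the gauge-like directions from which $\cW$ is built. A short computation using the defining constraint of $\cV$ shows that $L$ reduces to $(-\Delta+k^2)I_6$ on $\cV$, which is where genuine ellipticity will be used later. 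The main obstacle I anticipate is exactly this interpretation of ``ellipticity'' in the presence of the infinite-dimensional gauge-like kernel of $L$: it is only after the Helmholtz-type splitting $X=\cV\oplus\cW$ of Theorem~\ref{eq:Helmholtz} that $L$ becomes elliptic in a classical sense, and this is what will drive the subsequent variational analysis.
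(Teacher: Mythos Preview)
Your argument is essentially correct but follows a different route from the paper's. The paper proves self-adjointness via the theory of closed symmetric forms: it shows that the operator $\cnabla\times$ is closed on $L^2(\R^2)^6$ (hence $b_L$ is a closed nonnegative symmetric form), and then invokes the representation theorem (Reed--Simon, Theorem~VIII.15) to obtain a unique self-adjoint operator, which is identified with $L$ through the integration-by-parts identity on $\cC_0^\infty$. Your Fourier-multiplier argument is equally valid and, for a constant-coefficient operator, arguably more transparent: it yields self-adjointness on the maximal domain directly and simultaneously prepares the symbol computation needed in Theorem~\ref{th:L}. What the form approach buys is that it works verbatim for variable-coefficient generalizations and immediately exhibits the form domain $\dom(b_L)$; what your approach buys is that the spectral picture becomes explicit at once.

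Two remarks. First, a small slip: $\cnabla\times$ is formally \emph{symmetric} on $\cC_0^\infty(\R^2)^6$, not skew-adjoint (this mirrors the three-dimensional identity $\int(\nabla\times u)\cdot v = \int u\cdot(\nabla\times v)$); the identity $\int\langle Lu,v\rangle = \int\langle\cnabla\times u,\cnabla\times v\rangle$ then comes from a \emph{single} transfer of $\cnabla\times$, not two. This does not affect your conclusion. Second, your discussion of ellipticity is in fact more careful than the paper's: the paper's proof does not address ellipticity at all, and your observation that the principal symbol has block form $|\xi|^2 I_3 - \tilde\xi\tilde\xi^{T}$ with one-dimensional kernel $\R\tilde\xi$ is correct, so $L$ fails Petrovsky ellipticity. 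Your remark that $L$ restricts to $-\Delta+k^2$ on $\cV$ is exactly the sense in which genuine ellipticity enters the subsequent analysis.
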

\begin{proof} Let us show that $\dom(b_L)$ is closed in $L^2(\R^2)^6$. First note the symmetry property $\langle \cnabla u,v\rangle = \langle u,\cnabla v\rangle$ for all $u,v \in \dom(b_L)$. If $(u_n)_{n\in \N}\in \dom(b_L)$ is a sequence such $u_n\to u$ in $L^2(\R^2)^6$ and $\cnabla\times u_n\to \psi$ in $L^2(\R^2)^6$ for some $\psi\in L^2(\R^2)^6$ then the symmetry property implies  $\langle \cnabla\times u_n,\phi\rangle = \langle u_n,\cnabla\phi\rangle \to \langle u,\cnabla\phi\rangle$ for all $\phi\in \dom(b_L)$. Thus $\psi=\cnabla u$ and therefore $b_L$ is a closed symmetric bilinear form. It is therefore the associated bilinear form of a unique selfadjoint operator, cf. \cite[Theorem VIII.15]{reed_simon_vol1}. For all $u,v \in \cC_0^\infty(\R^2)^6$ we see that
$$
\langle Lu,\vp\rangle
	= \int_{\R^2} (\cnabla\times\cnabla\times u) \cdot \vp \,dx 
	= \int_{\R^2} (\cnabla\times u) \cdot (\cnabla\times \vp)\,dx 
	= b_L(u,\vp).
$$
Hence $b_L$ is the bilinear form of the operator $L: \dom(L) \to L^2(\R^2)^6$ with $\dom(L)=\{u\in L^2(\R^2)^6 \mbox{ s.t. } Lu\in L^2(\R^2)^6\}$.
\end{proof}

\begin{Th}\label{th:L}
	The operator $L:\dom(L) \subset L^2(\R^2)^6 \to L^2(\R^3)^6$ has spectrum $\sigma(L)={0}\cup [k^2,\infty)$, where $0$ is an eigenvalue of infinite multiplicity and $[k^2,\infty)$ consists of absolutely continuous spectrum. 
\end{Th}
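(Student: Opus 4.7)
The plan is to apply the Fourier transform and reduce the spectral problem to a finite-dimensional eigenvalue computation. Since $L$ has constant coefficients and is self-adjoint, $\cF L \cF^{-1}$ acts as multiplication by the $6\times 6$ Hermitian symbol matrix $\hat L(\xi)$, obtained from the displayed form of $L$ by substituting $\partial_{x_j}\mapsto i\xi_j$. The spectrum of $L$ is therefore the union (as a subset of $\R$) of the eigenvalue bands of $\hat L(\xi)$ over $\xi\in\R^2$, and the spectral type on each band is dictated by the corresponding band function.

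A direct inspection of $L$ shows the matrix decouples: rows and columns with indices $\{1,2,6\}$ (components $u_1,u_2,\tu_3$) interact only among themselves, and similarly for $\{3,4,5\}$ (components $u_3,\tu_1,\tu_2$). After a fixed permutation of rows and columns, $\hat L(\xi)$ splits into two $3\times 3$ Hermitian blocks of identical algebraic form, so it suffices to diagonalise one of them, say
\begin{equation*}
A(\xi) \;=\; \begin{pmatrix} \xi_2^2+k^2 & -\xi_1\xi_2 & ik\xi_1 \\ -\xi_1\xi_2 & \xi_1^2+k^2 & ik\xi_2 \\ -ik\xi_1 & -ik\xi_2 & \xi_1^2+\xi_2^2 \end{pmatrix}.
\end{equation*}
A short calculation gives $\mathrm{tr}\,A(\xi)=2(|\xi|^2+k^2)$, $\det A(\xi)=0$, and the sum of the three $2\times 2$ principal minors equal to $(|\xi|^2+k^2)^2$. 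Consequently, the characteristic polynomial of $A(\xi)$ is $\lambda\bigl(\lambda-(|\xi|^2+k^2)\bigr)^2$, giving eigenvalues $0$ (simple) and $|\xi|^2+k^2$ (double); the kernel is spanned by $(i\xi_1,i\xi_2,-k)^T$, as expected from $\cnabla\times\cnabla=0$. Combining both blocks, $\hat L(\xi)$ has a $2$-dimensional kernel and a $4$-dimensional eigenspace at the value $|\xi|^2+k^2$ for every $\xi\in\R^2$.

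From here the theorem follows. Because $|\xi|^2+k^2$ traces out $[k^2,\infty)$ as $\xi$ ranges over $\R^2$, $\sigma(L)=\{0\}\cup[k^2,\infty)$. Since $\ker\hat L(\xi)$ has constant dimension $2$, any section of the form $\hat u(\xi)=\phi(\xi)(i\xi_1,i\xi_2,0,0,0,-k)^T$ with $\phi\in\cC_0^{\infty}(\R^2)$ defines an element of $\ker L$, producing an infinite-dimensional family of independent kernel vectors; thus $0$ is an eigenvalue of infinite multiplicity. On the orthogonal complement $(\ker L)^\perp$, the Fourier multiplier $L$ is unitarily equivalent to multiplication by $|\xi|^2+k^2$ on four copies of $L^2(\R^2)$; the polar change of variable $t=|\xi|^2+k^2$ expresses every spectral measure as $\tfrac12\int_{S^1}|\hat f(\sqrt{t-k^2}\,\theta)|^2\,d\theta\,dt$, which is absolutely continuous with respect to Lebesgue measure on $[k^2,\infty)$. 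Hence the spectrum of $L$ on $(\ker L)^\perp$ is purely absolutely continuous and equal to $[k^2,\infty)$.

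The main technical effort is concentrated in the $3\times 3$ diagonalisation of $A(\xi)$, i.e.\ in computing its trace, determinant and sum of principal minors; once that is in place, the spectral conclusions are routine applications of Fourier-multiplier theory.
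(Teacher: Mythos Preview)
Your proof is correct and follows essentially the same route as the paper: conjugate by the Fourier transform, identify the $6\times6$ Hermitian symbol $\hat L(\xi)$, observe the block splitting into two identical $3\times3$ pieces, and compute the characteristic polynomial $\lambda^2(|\xi|^2+k^2-\lambda)^4$. The only differences are cosmetic: the paper obtains the block structure by an explicit row/column permutation and then expands the $3\times3$ determinants directly, whereas you use the trace/determinant/sum-of-principal-minors shortcut; and the paper frames the inclusion $\bigcup_\xi\sigma(\hat L(\xi))\subset\sigma(L)$ via an explicit Weyl-sequence construction, while you invoke Fourier-multiplier theory. Your treatment of the absolutely continuous part (the polar change of variable $t=|\xi|^2+k^2$ on $(\ker L)^\perp$) is in fact more explicit than what the paper writes out.
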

\begin{proof}
	Let us consider the symbol $\hat L(\xi)$ which is a complex hermitian $6\times 6$ matrix for $\xi\in \R^2$. Let us denote by $\sigma(\hat L(\xi))$ the  matrix eigenvalues of $\hat L(\xi)$. We will verify $\sigma(L)=\overline{\bigcup_{\xi\in \R^2}\sigma(\hat L(\xi))}$ by two steps: 
	\begin{itemize}
		\item[(i)] $\lambda$ is a matrix eigenvalue of $\hat L(\xi)$ for some $\xi\in \R^2$ $\Rightarrow$ $\lambda\in \sigma(L)$
		\item[(ii)] $\dist(\lambda, \overline{\bigcup_{\xi\in \R^2}\sigma(\hat L(\xi))})>0$ $\Rightarrow$ $\lambda$ lies in the resolvent set of $L$
	\end{itemize}
	Since the spectrum of $L$ is closed, (i) and (ii) imply the claim. \\	
	\noindent
	{\em Proof of (i):} Let $a\in \R^6$ be a unit vector with $\hat L(\xi_0)a = \lambda a$. If $\rho\in \cC_0^\infty(\R^2)$ is a function with $\|\rho\|_{L^2(\R^2)}=1$ then let $\hat u_k(\xi) =  k \rho(k(\xi-\xi_0)) a$. One finds that $\|\hat u_k\|_{L^2(\R^2)^6}=1$ and 
	\begin{eqnarray*}
		\| (\hat L-\lambda\Id) \hat u_k\|_{L^2(\R^2)^6}^2 &=& \int_{\R^2} |\bigl(\hat L(\xi)-\hat L(\xi_0)\bigr) a|^2 k^2 \rho^2(k(\xi-\xi_0))\,d\xi \\
		&=& \int_{\R^2} |\bigl(\hat L(\xi_0 + k^{-1}\eta)-\hat L(\xi_0)\bigr) a|^2 \rho^2(\eta)\,d\eta
		\to 0 \mbox{ as } k \to \infty 
	\end{eqnarray*}
	by continuity of the symbol and dominated convergence. By Plancherel's theorem we have that $(L-\lambda)u_k\to 0$ as $k\to \infty$ in $L^2(\R^2)^6$ with $\|u_k\|_{L^2(\R^2)^6}=1$ so that $\lambda\in \sigma(L)$ by Weyl's criterion.	
	It turns out that the characteristic polynomial of $\hat L(\xi)$ is given by $\lambda^2(|\xi|^2+k^2-\lambda)^4$ and hence $\sigma(L)=\{0\}\cup[k^2,\infty)$.  Indeed,  taking the Fourier-transform we find the symbol of $L$, whose characteristic polynomial is given by 
	\begin{multline*} 
	\det(\hat L(\xi)-\lambda\Id) =  \\
	\det\begin{pmatrix} 
	\xi_2^2+k^2-\lambda & -\xi_1\xi_2 & 0  & 0 & 0 & ik\xi_1 \\
	-\xi_1\xi_2 & \xi_1^2+k^2-\lambda & 0 & 0 & 0 & ik\xi_2 \\
	0 & 0 & |\xi|^2-\lambda & ik\xi_1 & ik\xi_2 & 0 \\
	0 & 0 & -ik\xi_1 & \xi_2^2+k^2-\lambda & -\xi_1\xi_2 & 0  \\
	0 & 0 & -ik\xi_2 & -\xi_1\xi_2 & \xi_1^2+k^2-\lambda & 0 \\
	-ik\xi_1 & -ik\xi_2 & 0 & 0 & 0 & |\xi|^2-\lambda
	\end{pmatrix}.
	\end{multline*}
	Interchanging column 3 and 6 as well as line 3 and 6 we get a block structure which leads to  
	\begin{eqnarray*}
		\lefteqn{\det(\hat L(\xi)-\lambda\Id)} \\
		& = & \det\begin{pmatrix} 
			\xi_2^2+k^2-\lambda & -\xi_1\xi_2 & ik\xi_1 \\
			-\xi_1\xi_2 & \xi_1^2+k^2-\lambda & ik\xi_2 \\
			-ik\xi_1 & -ik\xi_2 & |\xi|^2-\lambda \end{pmatrix} 
		\det \begin{pmatrix}
			|\xi|^2-\lambda & ik\xi_1 & ik\xi_2 \\
			-ik\xi_1 & \xi_2^2+k^2-\lambda & -\xi_1\xi_2 \\
			-ik\xi_2 & -\xi_1\xi_2 & \xi_1^2+k^2-\lambda 
		\end{pmatrix} \\
		& = & \lambda^2 (|\xi|^2+k^2-\lambda)^4
	\end{eqnarray*}
	since both matrices have the same determinant $-\lambda(|\xi|^2+k^2-\lambda)^2$. Thus the matrix eigenvalues are given by $\sigma(\hat L(\xi))=\{0, |\xi|^2+k^2\}$. The zero eigenvalue of infinite multiplicity if generated by all vector fields $\cnabla\begin{pmatrix} \alpha \\ \talpha\end{pmatrix}$ with $\alpha,\talpha\in C_c^\infty(\R^2)$.
\end{proof}

Let $p_{\cV}:X\to\cV$, $p_{\cW}:X\to\cW$  denote the projections of $X$ onto $\cV$, $\cW$, respectively. Usually we just write $u=v+w\in\cV\oplus\cW$, where $v=p_{\cV}(u)\in \cV$ and $w=u-v=p_{\cW}(u)\in\cW$.  Observe that $\cV$ and $\cW$ are both subspaces of $\dom(b_L)$ and that for $v+w\in X=\cV\oplus\cW$ we have 
$$b_L(v+w,v+w)=b_L(v,v)=\|v\|^2.$$
For $u\in \dom(b_L)$ using the duality pairing between $\dom(b_L)'$ and $\dom(b_L)$ we can define $Lu$ by  setting $\langle Lu,\phi\rangle_{\dom(b_L)'\times \dom(b_L)}:= b_L(u,\phi)$ for all $\phi\in \dom(b_L)$. In this way $L: X\to \dom(b_L)'$ is well-defined and has the kernel $\cW$. Note that $L$ restricted to $\cV$ coincides with the vector Schr\"odinger operator $-\Delta+k^2$ acting diagonally on elements of $\cV$.

\medskip

We say that $u\in \begin{pmatrix} U \\ \wt U \end{pmatrix}\in X$ is a {\em weak solution} to \eqref{eq} provided that
\begin{equation}
b_L(u,\vp)-\int_{\R^2}V(x) \langle u,\vp\rangle \, dx- \int_{\R^2}\langle f(x,u),\vp\rangle\, dx=0
\end{equation}
for any $\vp\in\cC_0^{\infty}(\R^2)^6$. From now on we assume that (F1)--(F4) and (V) are satisfied.
Observe that for $u=v+w\in \cV\oplus\cW$ we get
\begin{equation*}
J(u):=\frac12\|v\|^2
-\frac12\int_{\R^2}V(x)|u|^2\, dx- \int_{\R^2}F(x,u) \, dx.
\end{equation*}

\begin{Cor}\label{Prop_energy}
$J:X\to\R$ is of class $\cC^1$ and $u\in X$ is a critical point of $J$ if and only if  $u$ is a weak solution to \eqref{eq}. Moreover, if \eqref{eq:application} holds and $E$ of the form \eqref{eq:travel_wave} is a travelling wave field with the profiles $U$ and $\tU$, where $u= \begin{pmatrix} U \\ \wt U \end{pmatrix}\in X$ is a critical point of $J$, then the total electromagnetic energy per unit interval on the $x_3$-axis is finite, i.e.
\begin{eqnarray*}
	\cL(t)&=&\frac12\int_{\R^2}\int_{a}^{a+1}\langle E,D\rangle +\langle B,H\rangle\,dx_3\, d(x_1,x_2)<\infty.
\end{eqnarray*}
\end{Cor}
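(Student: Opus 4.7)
The argument has two independent parts: showing that $J\in \cC^1(X,\R)$ with critical points corresponding to weak solutions of \eqref{eq}, and, under \eqref{eq:application}, bounding the total energy $\cL(t)$. For the first part I decompose $u=v+w\in\cV\oplus\cW$ and note that Theorem~\ref{th:L} gives $\cW\subset\ker L$, so that $b_L$ annihilates $\cW$ in each argument and hence $\tfrac12 b_L(u,u)=\tfrac12\|v\|^2$ is a continuous quadratic form on $X$ with Fr\'echet derivative $\vp\mapsto b_L(u,\vp)$. The term $\tfrac12\int V|u|^2\,dx$ is continuous and quadratic on $L^2(\R^2)^6$, hence on $X$. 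For the nonlinear term, the growth estimates implicit in (F1)--(F3) together with the continuous embedding $X\hookrightarrow L^2(\R^2)^6\cap L^p(\R^2)^6$ put the Nemytskii functional $u\mapsto\int F(x,u)\,dx$ in the $\cC^1$ class on $X$, with derivative $\vp\mapsto\int\langle f(x,u),\vp\rangle\,dx$. Assembling the three pieces identifies $J'(u)(\vp)$ with the weak-formulation expression, and density of $\cC_0^\infty(\R^2)^6$ in $X$ (Theorem~\ref{eq:Helmholtz}) then yields the equivalence between $J'(u)=0$ and \eqref{eq} in the distributional sense.

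For the finite-energy claim, the first step is to substitute $\mu=1$ and the material law \eqref{susceptibility} to rewrite the integrand of \eqref{eq:EM_Energy} as $\eps(x)|E|^2+\chi(x,\langle|E|^2\rangle)|E|^2+|B|^2$. A direct computation from \eqref{eq:travel_wave} shows that $\langle|E|^2\rangle=\tfrac12|u|^2$ and yields the pointwise bound $|E(x_1,x_2,x_3,t)|^2\le 2|u(x_1,x_2)|^2$; combined with $V\in L^\infty$ this handles the first summand, while the second is controlled by $|f(x,u)||u|\le C(|u|^2+|u|^p)$ from the growth hypotheses and $u\in L^2\cap L^p$.

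The heart of the argument is the magnetic term $|B|^2$. An elementary calculation gives
\[
\nabla\times E(\cdot,t)=(\cnabla\times u)_{1:3}\cos(kx_3+\omega t)+(\cnabla\times u)_{4:6}\sin(kx_3+\omega t),
\]
and the identity $\cnabla\times\cnabla=0$, verified directly from the explicit formulas for $\cnabla$ and $\cnabla\times$, extends by density to $\cnabla\times w=0$ for every $w\in\cW$. Hence $\cnabla\times u=\cnabla\times v$, and time-integration of Faraday's law (with the divergence-free zero-mean choice of integration constant) produces a $B$ of the same trigonometric shape, with coefficients equal to $\pm\tfrac1\omega$ times the corresponding components of $\cnabla\times v$. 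Consequently $|B|^2\le C|\cnabla\times v|^2\le C'(|\nabla v|^2+|v|^2)\in L^1(\R^2)$ since $v\in H^1(\R^2)^6$. This cancellation is the subtlety I expect to be the main obstacle: the component $w\in\cW$ has a priori no Sobolev regularity, so without the vanishing $\cnabla\times w=0$ the magnetic energy could not be bounded in terms of the ambient norm on $X$.
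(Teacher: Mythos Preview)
Your proof is correct and follows essentially the same line as the paper's. The only noteworthy difference is in bounding the electric contribution $\langle E,D\rangle$: the paper exploits the critical-point identity $J'(u)(u)=0$, which rewrites $\int V|u|^2+\int\langle f(x,u),u\rangle$ as $b_L(u,u)$ and yields the compact estimate $\cL(t)\le\tfrac12(1+\omega^{-2})\,b_L(u,u)$; you instead bound the $\eps$- and $\chi$-terms separately via $V\in L^\infty$, $u\in L^2\cap L^p$, and the growth estimate $|f(x,u)||u|\le C(|u|^2+|u|^p)$. Your route has the mild advantage that it does not actually require $u$ to be a critical point for the finiteness conclusion. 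For the magnetic term the two arguments coincide: the paper's use of $b_L(u,u)=\|v\|^2<\infty$ is precisely your observation that $\cnabla\times w=0$ for $w\in\cW$, so that $\cnabla\times u=\cnabla\times v$ with $v\in H^1(\R^2)^6$.
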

\begin{proof}
The first statement is a consequence of Theorem \ref{eq:Helmholtz}.	According to the material laws
\begin{eqnarray*}
\langle E,D\rangle&=&	-\eps(x_1,x_2)\omega^2|E|^2+\chi(x_1,x_2, \langle |E|^2\rangle)|E|^2\\
&=&\big(-V(x_1,x_2)+\chi(x_1,x_2,\frac12 |u|^2)\big)\big(|U|^2\cos^2(kx_3+\omega t)+|\wt U|^2\sin^2(kx_3+\omega t)\big)
\end{eqnarray*} 
 and by the Faraday's law
\begin{eqnarray*}
\langle B,H\rangle &=& |B|^2 = \frac{1}{\omega^2}\big|\curlop \big(U(x_1,x_2) \sin(kx_3+\omega t)- \wt U(x_1,x_2)\cos(kx_3+\omega t)\big)\big|^2\\
&\leq & \frac{1}{\omega^2}|\cnabla\times u|^2.
\end{eqnarray*}
Therefore
$$
\begin{aligned}
\cL(t)\leq & \frac{1}{2\omega^2}b_L(u,u)+ \frac{1}{2}\int_{\R^2}\big(-V(x_1,x_2)+\chi(x_1,x_2,\frac12 |u|^2)\big) \\
		& \cdot \int_{a}^{a+1}\big(|U|^2\cos^2(kx_3+\omega t)+|\wt U|^2\sin^2(kx_3+\omega t)\big)\,dx_3d(x_1,x_2)\\
		\leq & \frac{1}{2}\bigl(1+\frac{1}{\omega^2}\bigr)b_L(u,u)<\infty.
\end{aligned}
$$
\end{proof}

\section{Abstract variational approach}\label{sec:criticaslpoitth}

Our approach is based on the critical point theory from \cite{BartschMederski,BartschMederskiJFA,MederskiSchinoSzulkin}, however sometimes we do not assume the monotonicity assumption (I8) given below and we present results with emphasis on where this condition is crucial.

Let
 $J:X\to\R$ be a functional of the form 
\begin{equation}\label{EqJ}
J(u) := \frac12\|u^+\|^2-I(u) \quad\text{for $u=u^++u^- \in X^+\oplus X^-$},
\end{equation}
such that  $X$ is a reflexive Banach space with the norm $\|\cdot\|$ and a topological direct sum decomposition $X=X^+\oplus X^-$, where $X^+$ is a Hilbert space with a scalar product $\langle \cdot , \cdot \rangle$. For $u\in X$ we denote by $u^+\in X^+$ and $u^- \in X^-$ the corresponding summands so that $u = u^++u^-$. We may assume $\langle u,u \rangle = \|u\|^2$ for any
$u\in X^+$ and $\|u\|^2 = \|u^+\|^2+\|u^- \|^2$. We define the topology $\cT$ on $X$ as the product of the norm topology in $X^+$ and the weak topology in $X^-$. Hence $u_n\cTto u$ if and only if $u_n^+\to u^+$ and $u_n^-\weakto u^-$. Let us define the set
\begin{equation}\label{eq:ConstraintM}
\cM := \{u\in X:\, J'(u)|_{X^-}=0\}=\{u\in X:\, I'(u)|_{X^-}=0\}.
\end{equation}
Clearly $\cM$ contains all critical points of $J$ and we assume the following conditions introduced in \cite{BartschMederski,BartschMederskiJFA}:
\begin{itemize}
	\item[(I1)] $I\in\cC^1(X,\R)$ and $I(u)\ge I(0)=0$ for any $u\in X$.
	\item[(I2)] $I$ is $\cT$-sequentially lower semicontinuous:
	$u_n\cTto u\quad\Longrightarrow\quad \liminf I(u_n)\ge I(u)$.
	\item[(I3)] If $u_n\cTto u$ and $I(u_n)\to I(u)$ then $u_n\to u$.
	\item[(I4)] $\|u^+\|+I(u)\to\infty$ as $\|u\|\to\infty$.
	\item[(I5)] If $u\in\cM$ then $I(u)<I(u+v)$ for every $v\in X^- \setminus\{0\}$.
	\item[(I6)] There exists $r>0$ such that $a:=\inf\limits_{u\in X^+,\|u\|=r} J(u)>0$.
	\item[(I7)]  $I(t_nu_n)/t_n^2\to\infty$ if $t_n\to\infty$ and $u_n^+\to u^+\ne 0$ as $n\to\infty$.
\end{itemize}
Observe that if $I$ is strictly convex, continuous and satisfies (I4), then (I2) and (I5) are clearly satisfied. Moreover, for any $u\in X^+$ we find  $m(u)\in\cM$ which is the unique global maximizer of $J|_{u+X^-}$. From now on we assume (I1)--(I7). Note that $m$ needs not be $\cC^1$, and $\cM$ needs not be a differentiable manifold since $I'$ is only continuous,  however from \cite[Proof of Theorem~4.4]{BartschMederskiJFA} we observe $m:X^+\to\cM$ is a homeomorphism and
$\wt{J}:=J\circ m: X^+\to \R$ is of class $\cC^1$.

We introduce the following min-max level 
\begin{equation}\label{cmX}
c:= \inf_{\ga\in\Ga}\sup_{t\in [0,1]} J(\ga(t)),
\end{equation}
where
\begin{equation}\label{def:Gamma}
\Ga := \big\{\ga\in\cC([0,1],\cM) \ : \ \ga(0)=0, \ \|\gamma(1)^+\|>r,\text{ and } J(\ga(1)^+)<0\big\}.
\end{equation}
If we look for ground state solutions with the least possible energy, we consider a Nehari-type constraint $\cN$ and  the following condition
\begin{itemize}
	\item[(I8)] $\frac{t^2-1}{2}I'(u)[u]+I(u) - I(tu+v)=\frac{t^2-1}{2}I'(u)[u] + tI'(u)[v] + I(u) - I(tu+v) \leq 0$\\ for every $u\in \cN$, $t\ge 0$, $v\in X^-$, where
	\begin{equation*}\label{eq:NehariDef}
	\cN := \{u\in X\setminus X^-: J'(u)|_{\R u\oplus X^-}=0\} = \{u\in\cM\setminus X^-: J'(u)[u]=0\} \subset\cM.
	\end{equation*}
\end{itemize}
The constraint $\cN$ is a generalized Nehari-type manifold inspired by a constrained considered in \cite{Pankov} for the Schr\"odinger problem.
We state the following result obtained in \cite[Theorem 3.3]{MederskiSchinoSzulkin} under assumptions (I1)--(I8).
\begin{Th}\label{ThLink1}
	Suppose $J$ of the form \eqref{EqJ} satisfies (I1)--(I7). Then $J$ has a sequence $(u_n)\subset \cM$ such that $J(u_n)\to c\geq a>0$ and $J'(u_n)(1+\|u_n^+\|)\to 0$ as $n\to\infty$.
If, in addition, (I8) holds, then $c=\inf_{\cN}J$.
\end{Th}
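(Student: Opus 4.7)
The plan is to transfer everything to the Hilbert space $X^+$ via the $\cC^1$ reduced functional $\wt J := J \circ m$. Since $m$ is a homeomorphism with $m(v)^+ = v$ and $J'(m(v))|_{X^-}=0$, a direct computation (using only that $\wt J \in \cC^1$ and the definition of $m$) yields the identity $\wt J'(v)[h] = J'(m(v))[h]$ for every $h \in X^+$, and therefore $\|\wt J'(v)\|_{(X^+)^*} = \|J'(m(v))\|_{X^*}$. Consequently any Cerami sequence $(v_n) \subset X^+$ for $\wt J$ pulls back through $m$ to a Cerami sequence $u_n := m(v_n)$ for $J$ in $\cM$ with $u_n^+ = v_n$, which reduces the first assertion to producing such a $(v_n)$ at the mountain pass level of $\wt J$.

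For the mountain pass geometry of $\wt J$ on $X^+$: the inequality $\wt J(v) \ge J(v)$ for $v \in X^+$ combined with (I6) gives $\inf_{\|v\|=r} \wt J(v) \ge a > 0$. Fix $v_0 \in X^+ \setminus \{0\}$ and $t_n \to \infty$; writing $m(t_n v_0) = t_n v_0 + w_n$ with $w_n \in X^-$, the element $v_0 + w_n/t_n$ has $X^+$-part $v_0 \ne 0$, so (I7) applied to $u_n := v_0 + w_n/t_n$ yields $I(t_n v_0 + w_n)/t_n^2 \to \infty$, whence $\wt J(t_n v_0) \to -\infty$. A standard Cerami-type mountain pass theorem on the Hilbert space $X^+$ then provides $(v_n)$ with $\wt J(v_n) \to \tilde c$ and $(1+\|v_n\|)\wt J'(v_n) \to 0$. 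Since $\gamma(t) \in \cM$ forces $\gamma(t) = m(\gamma(t)^+)$ by (I5), the map $\gamma \mapsto \gamma^+$ is a bijection between $\Gamma$ and its natural counterpart in $X^+$, along which $\sup J \circ \gamma = \sup \wt J \circ \gamma^+$; hence $\tilde c = c$, and $c \ge a$ follows from the mountain pass geometry.

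For the identification $c = \inf_\cN J$ under (I8), fix $u \in \cN$. From $J'(u)[u] = 0$ we get $\|u^+\|^2 = I'(u)[u]$; applying (I8) with $v := w - tu^-$ for arbitrary $w \in X^-$ and $t \ge 0$ yields
\[
J(u) - J(tu^+ + w) \;=\; \tfrac{1-t^2}{2}\|u^+\|^2 + I(tu^+ + w) - I(u) \;\ge\; 0.
\]
Thus $\max_{t \ge 0} \wt J(tu^+) = \wt J(u^+) = J(u)$ (the last equality using $m(u^+) = u$). Choosing $T > 0$ so large that $\|Tu^+\| > r$ and $\wt J(Tu^+) < 0$, the path $\gamma(s) := m(sTu^+)$ lies in $\Gamma$ with $\sup J \circ \gamma \le J(u)$, proving $c \le J(u)$ and hence $c \le \inf_\cN J$. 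The reverse inequality follows from the fibering characterization $\tilde c = \inf_{v \in X^+\setminus\{0\}} \max_{t \ge 0} \wt J(tv)$, obtained by radially deforming any admissible path through its endpoint without raising $\sup \wt J$, together with the consequence of (I8) that $t \mapsto \wt J(tv)$ attains its positive maximum at a unique $t_v > 0$ satisfying $\wt J'(t_v v)[v] = 0$, i.e.\ $m(t_v v) \in \cN$.

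The main obstacle is this last step. Without (I8), the map $t \mapsto \wt J(tv)$ need not have a single positive maximum, so no point of $\cN$ can be extracted from a generic ray and the Nehari-Pankov identification collapses. Assumption (I8) enforces a strict concavity-type property on each such ray through a point of $\cN$, which is exactly what is needed to transform the mountain pass level into $\inf_\cN J$. This also clarifies the structure of the theorem: the first conclusion is available under (I1)--(I7) alone, whereas the Nehari-type characterization in the second conclusion genuinely requires (I8).
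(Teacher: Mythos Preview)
Your approach is essentially the paper's: pass to $\wt J = J\circ m$ on $X^+$, verify the mountain pass geometry via (I5)--(I7), extract a Cerami sequence, and pull it back through $m$ using $\wt J'(v)=J'(m(v))|_{X^+}$ together with $J'(m(v))|_{X^-}=0$. The identification $c=c_\cM$ via the bijection $\gamma\leftrightarrow\gamma^+$ between $\Gamma$ and $\wt\Gamma$ is exactly what the paper does (it simply cites \cite{BartschMederskiJFA,MederskiSchinoSzulkin} for these facts), and your derivation of $c\le\inf_\cN J$ from (I8) is correct and more detailed than the paper, which just invokes \cite[Theorem~3.3(b)]{MederskiSchinoSzulkin}.

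The one soft spot is your reverse inequality $c\ge\inf_\cN J$. The claim that one can ``radially deform any admissible path through its endpoint without raising $\sup\wt J$'' is not justified and is not how the identification is obtained: there is no reason a homotopy from $\sigma$ to the ray $t\mapsto t\sigma(1)$ keeps $\sup\wt J$ from increasing. The standard argument (used in the references the paper cites) is instead an intermediate value step: under (I8) each ray $t\mapsto\wt J(tv)$ has a unique positive maximizer $t_v$ with $m(t_vv)\in\cN$, and for $\sigma\in\wt\Gamma$ one has $t_{\sigma(s)}>1$ near $s=0$ (since $\wt J(\sigma(s))<a$ while the maximum along the ray is $\ge a$) and $t_{\sigma(1)}<1$ (since $\wt J(\sigma(1))<0$), so continuity of $v\mapsto t_v$ yields $s_0$ with $m(\sigma(s_0))\in\cN$, whence $\sup_s\wt J(\sigma(s))\ge\wt J(\sigma(s_0))\ge\inf_\cN J$. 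With this correction your proof is complete and matches the paper's route.
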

\begin{proof} For the reader's convenience, we shortly sketch the proof.
Recall  that  by (I5) and (I6),
$\wt J(u)\geq J(u)\geq a$ for $u\in X^+$ and $\|u\|=r$, and  $\wt J(tu)/t^2\to -\infty$ as $t\to\infty$. Thus $\wt J$ has the mountain pass geometry and similarly as in \cite[Theorem 4.4]{BartschMederskiJFA} we may define the mountain pass level
\begin{equation}\label{cm}
c_\cM :=\inf_{\sigma\in\wt\Ga}\sup_{t\in [0,1]} \wt J(\sigma(t))=\inf_{\ga\in\Ga}\sup_{t\in [0,1]} J(\ga(t))\geq a,
\end{equation}
where
\begin{eqnarray*}
\wt\Ga &:=& \{\sigma\in\cC([0,1],X^+) \ : \ \sigma(0)=0, \ \|\sigma(1)\|>r,\text{ and } \wt J(\sigma(1))<0\}.
\end{eqnarray*}
By the mountain pass theorem there exists a Cerami sequence $(v_n)$ for $\wt J$ at the level $c_\cM$. see \cite{Cerami,bbf}. Since $\wt J'(v_n)=J'(m(v_n))$ (see \cite{BartschMederskiJFA,MederskiSchinoSzulkin}), then setting $u_n:=m(v_n)$ we obtain
$$J'(u_n)(1+\|u_n^+\|)=\wt J'(v_n)(1+\|v_n\|)\to 0.$$
If (I8) holds, then by \cite[Theorem 3.3 (b)]{MederskiSchinoSzulkin}, $c=\inf_{\cM}J$.
\end{proof}

Now we present a multiplicity result. Recall that
for a topological group acting on $X$, \emph{the orbit of $u\in X$} is often denoted by $G\ast u$, i.e., 
$$G\ast u:=\{gu \ : \ g\in G\}.$$
A set $A\subset X$ is called \emph{$G$-invariant} if $gA\subset A$ for all $g\in G$. A functional $J: X\to\R$ is called \emph{$G$-invariant} and a map $T: X\to X^*$ is called \emph{$G$-equivariant} if $J(gu)=J(u)$ and $T(gu)=gT(u)$ for all $g\in G$, $u\in X$. In our application we use the action given by $\Z^2$-translations.

Assume that $G$ is a topological group such that
\begin{itemize}
	\item[(G)] $G$ acts on $X$ by isometries and discretely in the sense that for each $u\ne 0$, $(G*u)\setminus\{u\}$ is bounded away from $u$. Moreover, $J$ is $G$-invariant and $X^+, X^-$ are $G$-invariant.
\end{itemize}
Observe that $\cM$ is $G$-invariant and $m:X^+\to\cM$ is $G$-equivariant.

We shall use the notation 
\begin{gather*}
\wt J^\beta := \{u\in X^+ \ :\  \wt J(u)\le\beta\}, \quad \wt J_\alpha := \{u\in X^+\ : \  \wt J(u)\ge\alpha\}, \\
\wt J_\alpha^\beta := \wt J_\alpha\cap \wt J^\beta,\quad \cK:=\big\{u\in X^+\ :\ \wt J'(u)=0\big\}
\end{gather*}
and call $G*u$ a \emph{critical orbit} whenever $u\in \cK$. Moreover the following variant of the {\em Cerami condition} between the levels $\alpha, \beta\in\R$ has been introduced in \cite{MederskiSchinoSzulkin}: 
\begin{itemize}
	\item[$(M)_\alpha^\beta$]
	\begin{itemize}
		\item[(a)] Let $\alpha\le\beta$. There exists $M_\alpha^\beta$ such that $\limsup_{n\to\infty}\|u_n\|\le M_\alpha^{\beta}$ for every $(u_n)\subset X^+$ satisfying $\alpha\le\liminf_{n\to\infty}\wt J(u_n)\le\limsup_{n\to\infty}\wt J(u_n)\leq\beta$ and \linebreak $(1+\|u_n\|)\wt J'(u_n)\to 0$.
		\item[(b)] Suppose in addition that the number of critical orbits in $\wt J_\alpha^\beta$ is finite. Then there exists $m_\alpha^\beta>0$ such that if $(u_n), (v_n)$ are two sequences as above and $\|u_n-v_n\|<m_\alpha^\beta$ for all $n$ large, then  $\liminf_{n\to\infty}\|u_n-v_n\|=0$.
	\end{itemize}
\end{itemize}

Note that if $J$ is even, then $m$ is odd and $\cM$ is symmetric,  i.e. $\cM=-\cM$. Note also that $(M)_\alpha^\beta$ is a condition on $\wt J$. We slightly generalize \cite[Theorem 3.5 (b)]{MederskiSchinoSzulkin}.
\begin{Th}\label{Th:CrticMulti}
	Suppose $J$ of the form \eqref{EqJ} satisfies (I1)--(I7), $J$ is even, $\dim(X^+)=\infty$ and
	\begin{equation}\label{eq:CondMulti}
		\inf_{\cK}\wt J \geq 0.
	\end{equation}
	  If $(M)_0^{\beta}$ holds for every $\beta>0$, then $J$ has infinitely many distinct critical orbits.
\end{Th}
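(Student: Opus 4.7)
The plan is to reduce the statement to a symmetric multiplicity problem for the $\cC^1$-functional $\wt J = J\circ m$ on $X^+$ and then mimic the scheme of \cite[Theorem~3.5(b)]{MederskiSchinoSzulkin}, with the single caveat that where (I8) was invoked there to secure positivity of critical levels, here one invokes the direct hypothesis $\inf_{\cK}\wt J\geq 0$.

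First I would record that since $J$ is even and $G$-invariant, and $m : X^+\to\cM$ is a $G$-equivariant homeomorphism, $\wt J$ is both even and $G$-invariant on $X^+$, and the identity $\wt J'(u)=0\iff J'(m(u))=0$ puts critical $G$-orbits of $\wt J$ in bijection with those of $J$. Hence it suffices to exhibit infinitely many $G$-orbits in $\cK$. I would argue by contradiction: assume $\cK$ contains only finitely many $G$-orbits, so that for every $\beta>0$ the level set $\wt J_0^\beta$ contains only finitely many critical orbits and the full strength of $(M)_0^\beta$ applies, yielding a uniform separation constant $m_0^\beta>0$ between any two distinct Cerami sequences.

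Next, using (I5)--(I7), the evenness of $\wt J$ and $\dim X^+ = \infty$, I would construct a sequence of symmetric min-max values. Choose an increasing chain of finite-dimensional subspaces $X_1^+\subset X_2^+\subset\cdots$ of $X^+$ and radii $R_n$ large enough that $\wt J\leq 0$ on $\partial B_{R_n}\cap X_n^+$ (possible by (I7) and oddness), and set
\begin{equation*}
c_n := \inf_{A\in\Sigma_n}\ \sup_{u\in A}\wt J(u),
\end{equation*}
where $\Sigma_n$ is a suitable family of symmetric subsets of $X^+$ with Krasnoselskii genus at least $n$, linking the sphere appearing in (I6). A standard linking argument then gives $c_n\geq a>0$, and a Bartsch-type estimate using (I7) on finite-dimensional pieces forces $c_n\to\infty$.

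The core of the argument is an equivariant deformation lemma. Because part (b) of $(M)_0^\beta$ provides the separation $m_0^\beta$ between distinct Cerami sequences, and because $\inf_\cK \wt J\geq 0$ ensures no critical orbit sits in $\wt J^{<0}$, one can build a $G$-equivariant odd pseudo-gradient flow (as in \cite{BartschMederskiJFA, MederskiSchinoSzulkin}) that avoids small neighbourhoods of the finitely many critical orbits in $\wt J_0^\beta$. This flow forces each $c_n$ to be a critical value of $\wt J$, and the usual genus-drop argument precludes the possibility that consecutive levels collapse into a single orbit. Since $c_n\to\infty$ while only finitely many critical orbits exist, we reach a contradiction. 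The main obstacle is the construction of this flow: without (I8) one loses the generalized Nehari projection structure that, in the previous framework, both normalized sequences and automatically placed critical values in $[a,\infty)$; here one must lean entirely on $(M)_0^\beta$(b) together with the hypothesis $\inf_\cK \wt J\geq 0$ to replace what (I8) previously delivered for free.
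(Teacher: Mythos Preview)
Your overall strategy---pass to $\wt J=J\circ m$, set up a symmetric minimax scheme, assume finitely many critical orbits, and derive a contradiction via an equivariant deformation lemma driven by $(M)_0^\beta$---is the same as the paper's. You also correctly identify that the hypothesis $\inf_\cK\wt J\ge 0$ is precisely what replaces (I8): it guarantees that all nontrivial critical values of $\wt J$ lie in $[0,\infty)$, so that the Cerami/deformation machinery furnished by $(M)_0^\beta$ for $\beta>0$ covers every relevant level.

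There are two points where you diverge from the paper. First, the paper does not build the minimax values from an increasing chain $X_1^+\subset X_2^+\subset\cdots$ together with the plain Krasnoselskii genus, but uses Benci's pseudoindex
\[
i^*(A):=\min_{h\in\cH}\gamma\bigl(h(A)\cap S(0,r)\bigr),\qquad
\beta_k:=\inf_{A\in\Sigma,\ i^*(A)\ge k}\ \sup_{A}\wt J,
\]
with $\Sigma$ the compact symmetric sets and $\cH$ the odd homeomorphisms that do not increase $\wt J$. The lower bound $\beta_k\ge a$ then falls out of the definition (take $h=\id$), which makes the linking step transparent. Your description of $\Sigma_n$ mixes the finite-dimensional-subspace scheme with the genus scheme and leaves the inequality $c_n\ge a$ resting on an unspecified ``suitable'' family.

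Second, and more substantively, you assert that ``a Bartsch-type estimate using (I7)'' forces $c_n\to\infty$. Under (I1)--(I7) alone this is not clear (no fountain-type growth on codimension-$n$ spheres is available), and the paper does \emph{not} use it. Instead, once finitely many critical orbits are assumed and the deformation lemma from \cite{MederskiSchinoSzulkin} is applied, one obtains that each $\beta_k$ is a critical value and that $a\le\beta_1<\beta_2<\cdots$ strictly; this already produces infinitely many distinct critical values, contradicting the finiteness assumption. Your argument would therefore be both cleaner and on firmer ground if you drop the unbounded-levels claim and argue via strict monotonicity of the $\beta_k$, as the paper does.
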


In fact, in \cite[Theorem 3.5 (b)]{MederskiSchinoSzulkin} condition (I8) has been  assumed instead of  \eqref{eq:CondMulti}. Note that if (I8) holds, then for every $v\in\cK\setminus\{0\}$ one has $m(v)\in\cN$, and $J(u)\geq J(ru^+/\|u^+\|)\geq a$ for any $u\in\cN$.
 Hence
$$\inf_{\cK\setminus \{0\}}\wt J\geq \inf_{\cN}J\geq a$$
so that all nontrivial critical points of $\wt J$ have energy at least $a>0$, in particular \eqref{eq:CondMulti} holds.  However, by the inspection of the proof of  \cite[Theorem 3.5 (b)]{MederskiSchinoSzulkin}, the following min-max values
\[
\beta_k :=  \inf_{A\in\Sigma,i^*(A)\ge k} \sup_{u\in A}\wt{\cJ}(u), \quad k=1,2,\ldots,
\]
are well-defined and finite,
where
 $\Sigma := \{A\subset X^+: A=-A \text{ and } A \text{ is compact}\}$,
and for $A\in\Sigma$, we define a variant of Benci's  pseudoindex \cite{bbf,MederskiSchinoSzulkin}.
\[
i^*(A) := \min_{h\in\cH} \gamma(h(A)\cap S(0,r))
\]
where $r$ is as in (I6), $S(0,r):=\{u\in X^+:\|u\|=r\}$, $\gamma$ is Krasnoselskii's genus and
$$\cH := \{h: X^+\to X^+ \text{ is a homeomorphism, }  h(-u)=-h(u) \text{ and } \wt{\cJ}(h(u))\le \wt{\cJ}(u) \text{ for all } u\}.
$$
We observe also that $\beta_k\geq a$. Hence, if we assume for contradiction that there is a finite number of distinct orbits $\{G\ast u: u\in \cK\}$, then using \eqref{eq:CondMulti} and arguing as in the proof of  \cite[Theorem 3.5 (b)]{MederskiSchinoSzulkin}, $\beta_k$ are critical values and
$a\le\beta_1<\beta_2<\ldots$, 
which is impossible by the assumption. Therefore Theorem \ref{Th:CrticMulti} holds true.

\medskip

Observe that if 
\begin{equation}\label{ARabstract}
I'(u)[u]\geq 2 I(u)\quad\hbox{for any }u\in X
\end{equation}
then $\wt J(u)=\wt J(u)-\frac12\wt J'(u)[u]\geq 0$ for any $u\in\cK$, so that \eqref{eq:CondMulti} holds. On the other hand, in applications where $J$ satisfies an Ambrosetti-Rabinowitz-type assumption e.g. (F6), then \eqref{ARabstract} is clearly satisfied.

\section{Application of the abstract variational approach}\label{sec:PS}

Recall that we assume (F1)--(F4) and (V). We assume, in addition, that $F$ is convex in $u\in\R^2$.
Let $X^+:=\cV$ and $X^-:=\cW$.
Let us define $I:X\to\R$ such that
$$I(u)=\frac12\int_{\R^2}V(x)|u|^2\, dx+\int_{\R^2}F(x,u) \, dx,$$
hence $J(v+w)=\frac{1}{2}\|v\|^2-I(v+w)$ for $v\in\cV$ and $w\in\cW$ so that $J$ is of the form \eqref{EqJ}. In this section we check that (I1)--(I7) are satisfied.

Let us define the following map
$$H^1(\R^2)^6\ni u\mapsto I_F(u):=\int_{\R^2}F(x,u)\,dx,$$
which is continuous and convex.

\begin{Lem}\label{prop:properties}
Conditions (I1)--(I4) are satisfied.
\end{Lem}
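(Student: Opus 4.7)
\emph{Setup and (I1).} The space $X=\cV\oplus\cW$ embeds continuously into $L^2(\R^2)^6\cap L^p(\R^2)^6$: the $\cV$-factor via $\cV\hookrightarrow H^1(\R^2)^6\hookrightarrow L^q(\R^2)^6$ for every $q\in[2,\infty)$ (two-dimensional Sobolev embedding), while $\cW\hookrightarrow L^2(\R^2)^6\cap L^p(\R^2)^6$ holds by the very definition of $\|\cdot\|_{\cW}$ as the $L^2\cap L^p$-closure of smooth vector fields. The quadratic form $u\mapsto\tfrac12\int V|u|^2\,dx$ is continuous on $L^2$ because $V\in L^\infty$, and standard Nemytskii theory on $L^2\cap L^p$ shows that $u\mapsto\int F(x,u)\,dx$ is $\cC^1$, using the growth $|f(x,u)|\le C(|u|+|u|^{p-1})$ extracted from (F2)--(F3). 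Non-negativity $I\ge 0$ follows from $V\ge\essinf V>0$ and $F\ge 0$; while $F(x,0)=0$, coming from $f(x,0)=0$ in (F2) together with $F(x,u)=\int_0^1\langle f(x,su),u\rangle\,ds$, gives $I(0)=0$.

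\emph{(I2) and (I3).} Let $u_n=v_n+w_n\cTto u=v+w$. The embeddings above yield $v_n\to v$ strongly and $w_n\weakto w$ weakly in $L^2\cap L^p$, hence $u_n\weakto u$ in $L^2\cap L^p$. Both $u\mapsto\tfrac12\int V|u|^2$ (continuous convex quadratic form, since $V\ge 0$) and $u\mapsto\int F(x,u)\,dx$ (continuous by (I1) and convex by assumption on $F$) are therefore weakly lower semicontinuous, which proves (I2). If moreover $I(u_n)\to I(u)$, adding the two separate weak lower semicontinuity inequalities forces each summand to converge: $\int V|u_n|^2\to\int V|u|^2$ and $\int F(x,u_n)\,dx\to\int F(x,u)\,dx$. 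Since $V$ induces an equivalent inner product on $L^2$, the Hilbert-space Radon--Riesz property gives $u_n\to u$ strongly in $L^2$, so that $w_n\to w$ in $L^2$. Passing to a subsequence with $u_n\to u$ a.e., I invoke a Brezis--Lieb-type identity for the Carath\'eodory integrand $F$ to obtain $\int F(x,u_n-u)\,dx\to 0$; splitting $\int|u_n-u|^p\,dx$ at the level $\{|u_n-u|\ge R\}$ and applying (F4) to bound the large-value part by $c_2^{-1}\int F(x,u_n-u)\,dx$ and the small-value part by $R^{p-2}|u_n-u|_2^2$, I conclude $u_n\to u$ in $L^p$. Hence $w_n\to w$ in $\cW$ and $u_n\to u$ in $X$, proving (I3).

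\emph{(I4).} If $\|v_n\|\to\infty$ the claim is immediate since $u_n^+=v_n$. Otherwise $\|v_n\|$ stays bounded while $\|w_n\|\to\infty$, so either $|w_n|_2\to\infty$, in which case $\int V|u_n|^2\ge(\essinf V)|u_n|_2^2\to\infty$, or $|w_n|_p\to\infty$ with $|w_n|_2$ bounded; in the latter case (F4) combined with the Sobolev bound $|v_n|_p\le C$ and the inequality $|u_n|_p^p\le c_2^{-1}\int F(x,u_n)\,dx + R^{p-2}|u_n|_2^2$ yields $\int F(x,u_n)\ge c_2|u_n|_p^p - c_2 R^{p-2}|u_n|_2^2\to\infty$.

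The principal obstacle is the $L^p$-convergence step in (I3), which relies on a Brezis--Lieb-type decomposition tailored to the convex Carath\'eodory potential $F$ under (F2)--(F4); the remaining items follow routinely from the variational setup built in Section~\ref{sec:varsetting}.
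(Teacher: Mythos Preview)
Your proposal is correct and follows essentially the same approach as the paper. For (I3) the paper makes the Brezis--Lieb step explicit via the identity $I_F(u_n)-I_F(u_n-u)=\int_0^1\int f(x,u_n-su)u\,dx\,ds\to I_F(u)$ using Vitali's theorem, and it packages your level-set splitting as the single inequality $F(x,u)+\varepsilon|u|^2\ge\tilde c_\varepsilon|u|^p$ (valid by (F4) and $F\ge0$), which is precisely what you obtain after splitting at $\{|u|\ge R\}$; the same inequality is reused for (I4).
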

\begin{proof}
In view of (F1), (F2), (F3) and (V) we easily get (I1) and by the convexity assumption on $F$ also (I2). To see that (I3) holds let us suppose that $u_n\cTto u$ and $I(u_n)\to I(u)$. Then by the fact that $F\geq 0$ and weak lower semicontinuity 
\begin{equation}
\int_{\R^2}V(x)|u_n|^2\, dx\to \int_{\R^2}V(x)|u|^2\, dx, \quad I_F(u_n)\to I_F(u) \label{eq:conv1}
\end{equation}
and
passing to a subsequence $\big(V(x)^{1/2}u_n\big)\weakto \big(V(x)^{1/2}u\big)$ in $L^2(\R^2)^6$. Thus $u_n\to u$ in $L^2(\R^2)^6$ and $u_n(x)\to u(x)$ for a.e. $x\in\R^2$. Then, by Vitali's convergence theorem 
\begin{eqnarray*}
	I_F(u_n)-I_F(u_n-u)
	&=&\int_{\mathbb{R}^N}\int_0^1 -\frac{d}{ds}F(x,u_n-s u)\, ds\,dx\\\nonumber
	&=&\int_{\mathbb{R}^N}\int_0^1 f(x,u_n-su)u\,ds\, dx\\\nonumber
	&\rightarrow& \int_0^1 \int_{\mathbb{R}^N} f(x,u-su)u\,dx\,ds\\\nonumber
	&=&\int_{\mathbb{R}^N}\int_0^1 -\frac{d}{ds}F(x,u-su)\, ds\, dx\\
	&=&\int_{\mathbb{R}^N}F(x,u)\, dx=I_F(u)\nonumber
\end{eqnarray*}
as $n\to\infty$. From \eqref{eq:conv1} we infer that $I_F(u_n-u)\to 0$. Observe that, by (F4) and $F\geq0$, for any $\eps>0$ we find a constant $\tilde c_\eps>0$ such that 
\begin{equation}\label{eq:ineqFlp}
F(x,u)+\eps|u|^2\geq \tilde c_\eps|u|^p\quad\hbox{ for }x,u\in\R^2.
\end{equation}
Therefore 
$$\int_{\R^2}F(x,u_n-u)\, dx +|u_n-u|_2^2\geq \tilde c_1|u_n-u|^p_p$$
and we get $u_n\to u$ in $L^p(\R^2)^6$, which completes proof of (I3). Now note that if $I(v_n+w_n)$ is bounded with  $(v_n)\subset \cV$ and $w_n\subset \cW$ such that $v_n$ is bounded, then $w_n$ is bounded in $L^2(\R^2)^6$ and by \eqref{eq:ineqFlp}, $w_n$ is bounded in $L^p(\R^2)^6$. Thus (I4) holds.
\end{proof}

Let $v\in\cV$. Since $\cW\ni w\mapsto I(v+w)\in\R$ is strictly convex and coercive, $I(u)\ge I(0)=0$, then $I(v+\cdot)$ attains a unique global minimum at some $w(v)\in\cW$. Hence the set
\begin{equation*}
\cM := \{u\in X: J'(u)|_{\cW}=0\}=\{u\in X:\, I'(u)|_{\cW}=0\}
=\big\{u\in X:\, u=p_{\cV}(u)+w\big(p_{\cV}(u)\big)\big\},
\end{equation*}
obviously contains all critical points of $J$ and there holds
\begin{itemize}
	\item[(I5)] If $u\in\cM$ then $I(u)<I(u+w)$ for every $w\in \cW\setminus\{0\}$.
\end{itemize}
Since $I(v+w(v))\leq I(v)$ for $v\in\cV$, we see that $w$ maps bounded sets into bounded sets and in view of (I2) and (I3) one can easily show the continuity of $w$.

Now we show the following properties which imply the linking geometry in the spirit of Benci and Rabinowitz \cite{BenciRabinowitz}.
\begin{Lem} The following conditions are satisfied.\\
(I6) There exists $r>0$ such that $a:=\inf\limits_{v\in \cV,\|v\|=r} J(v)>0$.\\
(I7) $I(t_nu_n)/t_n^2\to\infty$ if $t_n\to\infty$ and $p_{\cV}(u_n)\to v\ne 0$ as $n\to\infty$.
\end{Lem}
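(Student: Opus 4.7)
The plan for (I6) is the standard Sobolev-embedding argument. Integrating (F2) and (F3), for any $\epsilon>0$ one obtains a uniform upper bound $F(x,u)\le \epsilon|u|^2+C_\epsilon|u|^p$ valid for every $x,u\in\R^2$. For $v\in\cV\subset H^1(\R^2)^6$ one has $|v|_2\le \|v\|/k$ and, by the Sobolev embedding $H^1(\R^2)\hookrightarrow L^p(\R^2)$ available for any $p\ge 2$ in dimension $2$, $|v|_p\le C\|v\|$. Combined with $\|V\|_\infty<k^2$ from (V),
\[
J(v)\ge \frac12\Bigl(1-\frac{\|V\|_\infty}{k^2}-\frac{2\epsilon}{k^2}\Bigr)\|v\|^2 - C_\epsilon C^p\|v\|^p,
\]
and choosing $\epsilon$ small (to make the quadratic coefficient positive) and then $r>0$ small enough produces $a=\inf_{\|v\|=r,\,v\in\cV} J(v)>0$.

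For (I7) the key tool is the uniform coercivity $F(x,u)+\epsilon|u|^2\ge \tilde c_\epsilon|u|^p$ from \eqref{eq:ineqFlp}, already established in the proof of Lemma~\ref{prop:properties}. I argue by contradiction: suppose $I(t_nu_n)/t_n^2\le M$ along a subsequence. Applying the above inequality inside $I(t_nu_n)$ and dividing by $t_n^2$,
\[
\frac{I(t_nu_n)}{t_n^2}\ge \Bigl(\frac{\essinf V}{2}-\epsilon\Bigr)|u_n|_2^2+\tilde c_\epsilon\, t_n^{p-2}|u_n|_p^p.
\]
Choosing $\epsilon<\essinf V/2$, which is positive by (V), both summands are non-negative. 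Hence $|u_n|_2$ stays bounded and, since $p>2$ and $t_n\to\infty$, $|u_n|_p^p\le M/(\tilde c_\epsilon t_n^{p-2})\to 0$; thus $u_n\to 0$ in $L^p(\R^2)^6$, and along a further subsequence $u_n\to 0$ a.e.\ in $\R^2$.

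Since $v_n\to v$ in $\cV\hookrightarrow L^p(\R^2)^6$ we also have $v_n\to v$ a.e., and therefore $w_n=u_n-v_n\to -v$ a.e. The $L^2$-boundedness of $w_n$ (from $|u_n|_2,|v_n|_2$ bounded together with $\cV\perp\cW$ in $L^2$) allows one to extract $w_n\rightharpoonup w$ weakly in $L^2$ with $w\in\cW$ (weakly closed in $L^2$); comparing the weak and a.e.\ limits yields $w=-v$ a.e.\ in $\R^2$. Consequently $v=-w\in\cW$, which combined with $v\in\cV$ and $\cV\cap\cW=\{0\}$ (see Theorem~\ref{eq:Helmholtz}) forces $v=0$, contradicting $v\neq 0$. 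The main obstacle is preventing the $\cW$-component $w_n$ from compensating the super-quadratic growth of $F$ along the $\cV$-part $v_n$; this is bypassed precisely by \eqref{eq:ineqFlp}, which converts the pointwise assumption (F4) into an $L^p$-coercivity strong enough to force $u_n\to 0$ in $L^p$ and hence trigger the rigid identification $w=-v$.
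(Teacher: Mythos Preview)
Your proof is correct and follows essentially the same approach as the paper: for (I6) the Sobolev-embedding argument is identical, and for (I7) you use the same coercivity estimate \eqref{eq:ineqFlp} to force $u_n\to 0$ in $L^p$ and then conclude via $\cV\cap\cW=\{0\}$. The only cosmetic difference is that the paper identifies the weak $L^2$-limit of $u_n$ directly with its strong $L^p$-limit (namely $0$), whereas you pass through an a.e.\ convergent subsequence before identifying the weak $L^2$-limit of $w_n$ with $-v$; both routes are valid and lead to the same contradiction.
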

\begin{proof}
Observe that (F2) and (F3) imply that for any $\eps>0$ there is $c_\eps>0$ such that
\begin{equation}\label{ceps}
|f(x,u)|\leq \eps |u|+c_\eps |u|^{p-1}\hbox{ and }F(x,u)\leq \eps |u|^2+c_\eps |u|^p\quad\hbox{for }x,u\in\R^2.
\end{equation}
Then, taking $0<\eps<k^2-\esssup V$, by (V) we get
\begin{eqnarray*}
J(v)&\geq &\frac12\int_{\R^2}|\nabla v|^2+(k^2-\esssup V-\eps)|v|^2\, dx-C_\eps |v|_p^p
\end{eqnarray*}
for $v\in\cV$,
and we obtain (I6) by the Sobolev embedding of $H^1(\R^2)^6$ into $L^p(\R^2)^6$ and by choosing $r>0$ sufficiently small. Now suppose that $I(t_nu_n)/t_n^2$ is bounded, $t_n\to\infty$ and $p_{\cV}(u_n)\to v\ne 0$ as $n\to\infty$. Note that if $u_n=v_n+w_n$ with $v_n=p_{\cV}(u_n)\in\cV$ and $w_n\in\cW$, then by \eqref{eq:ineqFlp}
\begin{eqnarray*}
I(t_nu_n)/t_n^2&\geq &\frac12\int_{\R^2}\big(\essinf V-\eps\big)|v_n+w_n|^2\,dx+t_n^{p-2}c_\eps|v_n+w_n|_p^p.
\end{eqnarray*}
Since $I(t_nu_n)/t_n^2$ is bounded and taking $0<\eps<\essinf V$ we obtain that $v_n+w_n\to 0$ in $L^p(\R^2)^6$. Since $v_n+w_n$ is also bounded in $L^2(\R^2)^6$ it has a weakly convergent subsequence and the weak $L^2$-limit of this subsequence has to coincide with the the strong $L^p$-limit. Thus, dropping subsequence indices, we get in total that $w_n\rightharpoonup -v\not =0$ in $\cW$ and obtain a contradiction.
\end{proof}

In view of Theorem \ref{ThLink1}, there is a sequence $u_n=v_n+w_n\in\cM$ such that $J'(u_n)\to c\geq a>0$ and $J'(u_n)(1+\|v_n\|)\to 0$ as $n\to\infty$.
We will show that $(u_n)$ is bounded if, in addition, (F5) or (F6) holds, see Lemma \ref{lem:bounded} in Section \ref{sec:proof}. 
The next section however is devoted to profile decompositions of bounded sequences in $\cM$, which will be important to show that there is a nontrivial weak limit point of the sequence $(u_n)$ up to translations, which is a critical point.

\section{Profile decompositions in $\cM$}

In addition to (F1)--(F4) and (V) we assume also that
$V=V_0$ or $V(x)>V_0$ for a.e. $x\in\R^3$. Let us define $I_0:X\to\R$ such that
$$I_0(u)=\frac12\int_{\R^2}V_0|u|^2\, dx+\int_{\R^2}F(x,u) \, dx.$$
Clearly $I_0$ satisfies analogous conditions (I1)--(I4) and for $v\in\cV$, $I_0(v+\cdot)$ attains a unique global minimum at some $w_0(v)\in\cW$. Thus, similarly as in \eqref{eq:ConstraintM}, we define 
$$\cM_0:=\{u\in X:\, I'_0(u)|_{\cW}=0\}.$$
Recall that profile decompositions of bounded sequences in $H^1(\R^N)$ have been obtained for instance by  Nawa \cite{Nawa}, Hmidi and Keraani \cite{HmidiKeraani}, and  by G\'erard \cite{Gerard} in $\dot{H}^s(\R^N)$. A similar result cannot be obtained in $X=\cV\oplus\cW$, since $\cW$ is not locally compactly embedded into $L^p(\R^2)$ for $p\geq 1$. However we prove the following decomposition result in the topological constraints $\cM,\cM_0\subset X$.

\begin{Th}\label{ThSplit}
	If $(u_n)$ is bounded in $\cM$, then, passing to a subsequence,
	there is $K\in \N\cup \{\infty\}$ and there are $\tu_0\in\cM$ and sequences $(\tu_i)_{i\geq 1}\subset \cM_0$, $(y_n^i)_{n\geq i}\subset \Z^2$ such that $y_n^0=0$, $|y_n^i-y_n^j|\to\infty$ as $n\to\infty$ for $i\neq j$, and the following conditions hold:\\
(a) If $K<\infty$, then $\tu_i\neq 0$ for $1\leq i\leq K$ and $\tu_i=0$ for $i>K$, if $K=\infty$, then $\tu_i\neq 0$ for  all $i\geq 1$.\\
(b) 
	$u_n(\cdot+y_n^i)\weakto \tu_i\hbox{ in }X$ for any $0\leq i < K+1$ \textnormal{(\footnote{If $K=\infty$, then $K+1=\infty$ as well.})}.\\
(c)
	$u_n(\cdot+y_n^i)\to\tu_i\hbox{ in }L_{loc}^{p}(\R^2)^6$ and a.e. in $\R^2$ for any $0\leq i < K+1$.\\
(d) There holds
\begin{eqnarray}\label{eq:Decomp1}
\lim_{n\to\infty}\|p_{\cV}(u_n)\|^2&\geq& \sum_{i=0}^{\infty} \|p_{\cV}(\tu_i)\|^2,\\\label{eq:Decomp2}
\lim_{n\to\infty}I(u_n)&\geq& I(\tu_0)+\sum_{i=1}^{\infty} I_0(\tu_i),\\\label{eq:Decomp3}
\lim_{n\to\infty}I_F(u_n)&=&\sum_{i=0}^{\infty} I_F(\tu_i),\\\label{eq:Decomp4}
&&\hspace{-3cm}\lim_{k\to\infty}\lim_{n\to\infty} I_F\big(u_n-\sum_{i=0}^k\tu_i(\cdot-y_n^i)\big)=0.
\end{eqnarray}
\end{Th}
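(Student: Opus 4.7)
The plan is to iteratively extract profiles using Lions' concentration-compactness principle, adapted to the two-component structure $u_n=v_n+w_n\in\cV\oplus\cW$ and to the constraint $u_n\in\cM$. Boundedness of $(u_n)$ in $X$ gives boundedness of $v_n=p_{\cV}(u_n)$ in $\cV\subset H^1(\R^2)^6$ and of $w_n=p_{\cW}(u_n)$ in $L^2(\R^2)^6\cap L^p(\R^2)^6$. Passing to a subsequence, $u_n\weakto \tu_0$ in $X$ and, by Rellich--Kondrachov applied to the $\cV$-component, $v_n\to p_{\cV}(\tu_0)$ pointwise a.e.\ and in $L^p_{\mathrm{loc}}(\R^2)^6$.

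The first task is to prove $\tu_0\in\cM$, i.e.\ that the relation $I'(u_n)|_{\cW}=0$ is preserved in the weak limit. Testing against $\varphi\in\cC_0^{\infty}(\R^2)^6\cap\cW$ (dense in $\cW$), the linear potential term $\int V(x)\langle u_n,\varphi\rangle\,dx$ passes to the limit since $V\varphi\in L^2(\R^2)^6$ and $u_n\weakto\tu_0$ in $L^2$. For the nonlinear term $\int\langle f(x,u_n),\varphi\rangle\,dx$, the convexity of $F$ in $u$ makes $f(x,\cdot)$ monotone; combining the monotonicity with the growth bound (F3) and the a.e.\ convergence of $v_n$ allows passage to the limit exactly as in the (I3) analysis of Lemma~\ref{prop:properties}, simultaneously yielding pointwise a.e.\ convergence $u_n\to\tu_0$ after a further subsequence.

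Next, set $r_n^{(1)}:=u_n-\tu_0$; then $r_n^{(1)}\weakto 0$ in $X$. Apply Lions' vanishing lemma in $L^p$: either $\sup_{y\in\R^2}\int_{B_1(y)}|r_n^{(1)}|^p\,dx\to 0$, in which case $r_n^{(1)}\to 0$ in $L^p(\R^2)^6$ and we finish with $K=0$; or else we select $y_n^1\in\Z^2$, necessarily with $|y_n^1|\to\infty$ (since $r_n^{(1)}\weakto 0$), such that $r_n^{(1)}(\cdot+y_n^1)\weakto\tu_1\neq 0$. The $\Z^2$-periodicity of $f$ from (F1) and the decay $V-V_0\in L^{p/(p-2)}(\R^2)$ together ensure that the shifted equation $I'\bigl(u_n(\cdot+y_n^1)\bigr)\big|_{\cW}=0$ loses the $V-V_0$ term in the limit---because $(V-V_0)(\cdot+y_n^1)$ vanishes in $L^{p/(p-2)}$ against any bounded integrand in $L^p$---while the constant part $V_0$ and the periodic $f$ survive. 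Hence $\tu_1\in\cM_0$. Iterate with $r_n^{(k+1)}:=u_n-\sum_{i=0}^{k}\tu_i(\cdot-y_n^i)$; the mutual orthogonality $|y_n^i-y_n^j|\to\infty$ for $i\neq j$ is automatic, since two bumps at bounded distance would have been captured by a single earlier extraction.

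Finally, the identities in (d) follow from Brezis--Lieb-type arguments combined with the pointwise a.e.\ convergence and the growth of $F$: \eqref{eq:Decomp1} is weak lower semicontinuity with translation-orthogonality for the $\cV$-component, \eqref{eq:Decomp3} is Brezis--Lieb for the Nemytskii functional $I_F$ (via (F3) and the pointwise limits, splitting $I_F(u_n)=I_F(\tu_0)+I_F(r_n^{(1)})+o(1)$ and iterating), \eqref{eq:Decomp2} combines these with the $L^{p/(p-2)}$-decay of $V-V_0$ to handle the potential term, and \eqref{eq:Decomp4} follows from summability $\sum_i|\tu_i|_p^p<\infty$ forced by \eqref{eq:Decomp3} together with $|r_n^{(k)}|_p\to 0$ as $k\to\infty$. \emph{The main obstacle is the absence of any local compactness for the $\cW$-component:} one cannot directly extract an $L^p_{\mathrm{loc}}$-convergent subsequence of $w_n$ from $L^2\cap L^p$-boundedness. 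The device that unlocks the proof is the $\cM$-constraint combined with the monotonicity of $f(x,\cdot)$ coming from convexity of $F$, which lets $w_n$ inherit a.e.\ and local compactness from the $H^1$-bounded component $v_n$ through the Euler--Lagrange equation.
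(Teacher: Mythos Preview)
Your proposal correctly identifies the central difficulty---the lack of local compactness for the $\cW$-component---but the device you offer to overcome it does not work as stated, and this is where the argument breaks.

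\textbf{The a.e.\ convergence of $w_n$ is not established.} You claim that monotonicity of $f(x,\cdot)$ together with the constraint $I'(u_n)|_{\cW}=0$ lets $w_n$ ``inherit a.e.\ and local compactness from $v_n$'', and you invoke the (I3) analysis of Lemma~\ref{prop:properties}. But the (I3) argument starts from the hypothesis $I(u_n)\to I(u)$, which you do not have; you only have weak convergence and the constraint. A direct Minty-type argument also fails: testing the constraint with $w_n-\tw_0$ and invoking monotonicity leaves you with the cross term $\int\langle f(x,u_n),v_n-\tv_0\rangle\,dx$, and since $v_n-\tv_0$ only vanishes in $L^p_{\mathrm{loc}}$ (not globally) while $f(x,u_n)$ is merely bounded in $L^2+L^{p'}$, this term does not go to zero. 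Without a.e.\ convergence of $u_n$ you cannot pass to the limit in $\int\langle f(x,u_n),\varphi\rangle$ to get $\tu_0\in\cM$, nor can you run Brezis--Lieb for \eqref{eq:Decomp3}.

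\textbf{Lions' lemma does not apply to $r_n^{(1)}$.} You propose the dichotomy ``either $\sup_y\int_{B_1(y)}|r_n^{(1)}|^p\to 0$, hence $r_n^{(1)}\to 0$ in $L^p$''. Lions' lemma requires an $H^1$-bound (or some gradient control) to pass from vanishing concentration to global $L^p$-decay; the $\cW$-component of $r_n^{(1)}$ has none, and the implication is false in general for sequences that are merely bounded in $L^2\cap L^p$.

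\textbf{How the paper proceeds.} The paper separates the two components. It first runs a standard $H^1$ profile decomposition on $v_n=p_{\cV}(u_n)$ alone, obtaining profiles $\tv_i$ and shifts $y_n^i$; Lions' lemma is applied only to $\phi_n^k=v_n-\sum_{j\leq k}\tv_j(\cdot-y_n^j)$, which \emph{is} $H^1$-bounded. It then \emph{defines} $\tu_0:=\tv_0+w(\tv_0)\in\cM$ and $\tu_i:=\tv_i+w_0(\tv_i)\in\cM_0$ for $i\geq 1$, so membership in $\cM,\cM_0$ is automatic. The substantial work is to show that these constructed profiles coincide with the weak limits of $u_n(\cdot+y_n^i)$. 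This is done by an energy squeezing argument: the minimizing property $I(v_n+w(v_n))\leq I(v_n+\psi)$ for all $\psi\in\cW$ is applied with the trial function $\psi=w(\tv_0)+\sum_{i=1}^k w_0(\tv_i)(\cdot-y_n^i)$, yielding an upper bound for $\liminf I(u_n)$; weak lower semicontinuity gives a matching lower bound; equality then forces $|w(v_n)(\cdot+y_n^i)\chi_{B(0,(n-2r)/2)}|_2\to|\tw_i|_2$ and hence strong $L^2\cap L^p$-convergence on expanding balls, which delivers the a.e.\ convergence in (c) and the identities in (d). The mechanism is the variational (minimizing) characterization of $w(v_n)$, not monotonicity of $f$.
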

\begin{proof} For a measurable set $A\subset\R^2$ we use the notation $\chi_A$ to denote the characteristic function of $A$. Let $u_n=v_n+w(v_n)\in\cM$, where $v_n=p_{\cV}(u_n)\in\cV$. Passing to a subsequence $\lim_{n\to\infty}\|v_n\|$, $\lim_{n\to\infty}I(u_n)$ and $\lim_{n\to\infty}I_F(u_n)$ exist and are finite.

{\em Part 1. Profile decomposition for $(v_n)$.}
  Take $r>\sqrt{2}$.
Since $(v_n)\subset H^1(\R^2)$ is bounded, we claim that, passing to a subsequence,
 there is $K\in \N\cup \{\infty\}$ and there is a sequence
$(\tv_i)_{i=0}^K\subset H^1(\R^2)$, for $0\leq i <K+1$  there are a sequence $(y_n^i)\subset \Z^2$ and positive numbers $(c_i)_{i=1}^{K}$ such that $y_n^0=0$ and for any $0\leq i<K+1$ one has
\begin{eqnarray}	\label{Eqxnxm1}
&&v_n(\cdot+y_n^i)\weakto\tv_i\hbox{ in }H^1(\R^2)\hbox{ and }v_n(\cdot+y_n^i)\chi_{B(0,n)}\to\tv_i\hbox{ in }L^{2}(\R^2)^6\cap L^{p}(\R^2)^6,\\	\label{Eqxnxm2}
&&\tv_i\neq 0\hbox{ if }i\geq 1,\\
\label{Eqxnxm}
&&|y_n^i-y_n^j|\geq n-2r\hbox{ for } j\neq i, 0\leq i,j< K+1\hbox{ and sufficiently large }n,\\\label{EqIntegralunSumci}
&&\int_{B(y_n^{i},r)}|\phi_n^{i-1}|^2\, dx \geq c_{i}\geq\frac{1}{2}\sup_{y\in\R^N}\int_{B(y,r)}|\phi_n^{i-1}|^2\, dx>0
 \hbox{ for }i\geq 1, \hbox{ and sufficiently large }n,
\end{eqnarray}
where 
$$\phi_n^i:=v_n-\sum_{j=0}^{i}\tv_j(\cdot -y_n^j)\quad\hbox{for }i\geq 0, n\geq 1.$$ 
Indeed,
passing to a subsequence we may assume that
\begin{eqnarray*}
	v_n &\weakto& \tv_0\quad \hbox{ in }H^1(\R^2)^6\\
	v_n\chi_{B(0,n)} &\to& \tv_0\quad \hbox{ in }L^{2}(\R^2)^6\cap L^p(\R^2)^6.
\end{eqnarray*}
The latter convergence follows from the fact that for any $n$, 
$H^1(B(0, n))$ is compactly embedded into $L^p(B(0, n))$ and we find sufficiently large $k_n$ such that 
$$|(v_{k_n}-\tv_0)\chi_{B(0,n)}|_{2} +|(v_{k_n}-\tv_0)\chi_{B(0,n)}|_{p}<\frac1n.$$
The subsequence $(u_{k_n})$ is then relabelled by $(u_n)$. 

Take $\phi_n^0:=v_n-\tv_0$ and
if
\begin{equation*}
\lim_{n\to\infty}\sup_{y\in\R^N}\int_{B(y,r)}|\phi_n^0|^2\, dx=0,
\end{equation*}
then we finish the proof of our claim with $K=0$.
Otherwise, passing to a subsequence, we find $(y_n^1)\subset\Z^2$ and a constant $c_1>0$ such that
\begin{equation}\label{eq:LemProofLions1}
\int_{B(y_n^{1},r)}|\phi_n^0|^2\, dx \geq c_{1}\geq \frac{1}{2}\sup_{y\in\R^N} \int_{B(y,r)}|\phi_n^0|^2\, dx>0.
\end{equation}
Note that $(y_n^1)$ is unbounded and we may assume that $|y_n^1|\geq n-r$.  Since $(v_n(\cdot+y_n^1))$ is bounded in $H^1(\R^2)$, we find $\tv_1\in H^1(\R^2)$ such that up to a subsequence
$v_n(\cdot+y_n^1)\weakto \tv_1$. In view of \eqref{eq:LemProofLions1},  we get $\tv_1\neq 0$, and again we may assume that $v_n(\cdot+y_n^1)\chi_{B(0,n)}\to \tv_1$ in $L^{2}(\R^2)^6\cap L^p(\R^2)^6$.  
We set  $\phi_n^1:=\phi_n^0-\tv_1(\cdot -y_n^1)=v_n-\tv_0-\tv_1(\cdot -y_n^1)$ and observe that if 
\begin{equation*}
\lim_{n\to\infty}\sup_{y\in\R^2}\int_{B(y,r)}|\phi_n^1|^2\, dx=0,
\end{equation*}
then we finish the proof of our claim with $K=1$. Otherwise, passing to a subsequence, we find $(y_n^2)\subset\Z^2$ and a constant $c_2>0$ such that
\begin{equation}\label{eq:LemProofLions2}
\int_{B(y_n^{2},r)}|\phi_n^1|^2\, dx \geq c_{2}\geq \frac{1}{2}\sup_{y\in\R^2} \int_{B(y,r)}|\phi_n^1|^2\, dx>0
\end{equation}
and $|y_n^2|\geq n-r$. Moreover $|y_n^2-y_n^1|\geq n-2r$. Otherwise $B(y_n^2,r)\subset B(y_n^1,n)$ and the convergence $\tv_0\chi_{B(y_n^2,r)}\to 0$ and $v_n(\cdot+y_n^1)\chi_{B(0,n)}\to \tv_1$ in $L^2(\R^2)^6$ contradict \eqref{eq:LemProofLions2}.
Then we find $\tv_2\neq 0$ such that passing to a subsequence 
$$\phi_n^1(\cdot+y_n^2),\;v_n(\cdot +y_n^2)\weakto \tv_2\text{ in }H^1(\R^2)\hbox{ and }v_n(\cdot+y_n^2)\chi_{B(0,n)}\to \tv_2\hbox{ in } L^{2}(\R^2)^6\cap L^p(\R^2)^6.$$ Again, if 
\begin{equation*}
\lim_{n\to\infty}\sup_{y\in\R^2}\int_{B(y,r)}|\phi_n^2|^2\, dx=0,
\end{equation*}
where $v_n^2:=\phi_n^1-\tv_2(\cdot-y_n^2)$,
then we finish proof with $K=2$.
Continuing the above procedure we finally find $K\in \N\cup \{\infty\}$ such that for $0\leq i<K+1$, \eqref{Eqxnxm1}--\eqref{EqIntegralunSumci} hold. In view of the Brezis-Lieb Lemma \cite{BrezisLieb},   
$$\limsup_{n\to\infty} |v_n|_p=|\tv_0|_p+\limsup_{n\to\infty} |\phi_n^0|_p=|\tv_0|_p+|\tv_1|_p+\limsup_{n\to\infty} |\phi_n^1|_p=
|\tv_0|_p+...+|\tv_i|_p+\limsup_{n\to\infty} |\phi_n^i|_p$$
for $i\geq 0$.
If there is $i\geq 0$ such that 
$$\lim_{n\to\infty}\sup_{y\in\R^2}\int_{B(y,r)}|\phi_n^i|^2\, dx=0,$$
then $K=i$ and setting $\tv_k=0$ for $k>i$ we get by Lion's Lemma \cite[Lemma 1.21]{Willem}
$$
\lim_{n\to\infty} |\phi_{n}^i|_{p}=0.
$$
Since $\phi_n^k=\phi_n^i$ for $k\geq i$, we find in particular
\begin{equation}\label{eq:phink}
\lim_{k\to\infty}\limsup_{n\to\infty} |\phi_{n}^k|_{p}=0. 
\end{equation}
In the case $K=\infty$ we want to show that \eqref{eq:phink} is still satisfied. We argue as in \cite[Proof of Theorem 1.4]{MederskiBL}.
Suppose, for a contradiction, that $\limsup_{k\to\infty}\limsup_{n\to\infty} |\phi_{n}^k|_{p}>0$. 
Then we find $\delta>0$ and increasing sequences $(i_k), (n_k)\subset \N$ such that
$$\int_{\R^2}|\phi_{n_k}^{i_k}|^p\, dx>\delta$$
and 
\begin{equation} \label{eq:added_by_W}
\sup_{y\in\R^2}\int_{B(y,r)}|\phi_{n_k}^{i_k}|^2\,dx\leq \limsup_{n\to\infty}\Big(\sup_{y\in\R^2}\int_{B(y,r)}|\phi_n^{i_k}|^2\,dx\Big)+\frac{1}{i_k}.
\end{equation}
Note that by \eqref{EqIntegralunSumci} we have
\begin{eqnarray*}
	c_{k+1}&\leq& \int_{B(y_n^{k+1},r)}
	|\phi_n^k|^2\, dx\\
	&\leq&2\int_{B(y_n^{k+1},r)}|\phi_n^i|^2\, dx+2\int_{B(y_n^{k+1},r)}\Big|\sum_{j=i+1}^{k}\tv_j(\cdot -y_n^j)\Big|^2\, dx\\
	&\leq& 4 c_{i+1} + 2(k-i)\sum_{j=i+1}^{k}\int_{B(y_n^{k+1}-y_n^j,r)}|\tv_j|^2\, dx
\end{eqnarray*}
for any $0\leq  i<k$.
Taking into account \eqref{Eqxnxm} and letting $n\to\infty$ we get $c_{k+1}\leq 4 c_{i+1}$. Take $k\geq 1$ and   $n>4r$. Again by \eqref{EqIntegralunSumci} and \eqref{Eqxnxm}  we obtain
\begin{equation*}
\begin{aligned}
\frac{1}{16}\sup_{y\in\R^N}\int_{B(y,r)}|\phi_n^k|^2\,dx
&\leq \frac{1}{8} c_{k+1}\leq \frac{1}{2k} \sum_{i=0}^{k-1} c_{i+1}\leq\frac{1}{2k} \sum_{i=0}^{k-1}
\int_{B(y_n^{i+1},r)}|\phi_n^i|^2\, dx\\
&\leq\frac{1}{k} \sum_{i=0}^{k-1}
\int_{B(y_n^{i+1},r)}|v_n|^2 + \Big|\sum_{j=0}^{i}\tv_j(\cdot -y_n^j)\Big|^2\, dx\\
&= \frac{1}{k}
\int_{\bigcup_{i=0}^{k-1}B(y_n^{i+1},r)}\left(|v_n|^2\,dx +\frac1k\int_{\R^N} \Big|\sum_{i=0}^{k-1}\sum_{j=0}^{i}\tv_j(\cdot -y_n^j)\chi_{B(y_n^{i+1},r)}\Big|^2\,\right) dx\\
&\leq \frac{1}{k}
|v_n|_2^2+\frac1k\Big|\sum_{i=0}^{k-1}\sum_{j=0}^{i}\tv_j(\cdot -y_n^j)\chi_{B(y_n^{i+1},r)}\Big|^2_2.
\end{aligned}
\end{equation*}
Observe that by \eqref{Eqxnxm} and since $n>4r$ we have
$$B(y_n^{i+1}-y_n^j,r)\subset \R^2\setminus B(0,n-3r)\hbox{ for }0\leq j\leq  i<k$$
and
\begin{eqnarray*}
	\Big|\sum_{i=0}^{k-1}\sum_{j=0}^{i}\bar{v}_j(\cdot -y_n^j)\chi_{B(y_n^{i+1},r)}\Big|_{2}&\leq& \nonumber
	\sum_{i=0}^{k-1}\sum_{j=0}^{i}\big|\bar{v}_j\chi_{B(y_n^{i+1}-y_n^j,r)}\big|_{2}\leq \sum_{i=0}^{k-1}\sum_{j=0}^{i}\big|\bar{v}_j\chi_{\R^2\setminus B(0,n-3r)}\big|_{2}\\
	&\leq& k\sum_{j=0}^{k-1}\big|\bar{v}_j\chi_{\R^2\setminus B(0,n-3r)}\big|_{2}\to 0
\end{eqnarray*}
as $n\to\infty$. Hence
\begin{equation}\label{eq:ineqGer1}
\limsup_{n\to\infty}\Big(\sup_{y\in\R^2}\int_{B(y,r)}|\phi_n^k|^2\,dx\Big)\leq  \frac{32}{k}
\limsup_{n\to\infty}|v_n|_2^2.
\end{equation}
Therefore we obtain from \eqref{eq:added_by_W}
$$\lim_{k\to\infty} \Big(\sup_{y\in\R^N}\int_{B(y,r)}|\phi_{n_k}^{i_k}|^2\,dx\Big)=0,$$
and in view of \cite[Lemma 1.21]{Willem} we obtain that $\phi_{n_k} ^{i_k}\to 0$ in $L^p(\R^2)^6$ as $k\to\infty$, which is a contradiction. Therefore passing to a subsequence \eqref{eq:phink} holds.	

{\em Part 2. Profile decomposition for $(u_n)$.}
	Note that $(w(v_n))_{n\in\N}$ and $(w_0(v_n))_{n\in\N}$ are bounded and we may assume
	\begin{eqnarray}\label{Eqweakw}
	&&w(v_n)(\cdot+y_n^i)\rightharpoonup\tw_i\quad\hbox{in }L^{2}(\R^2)^6\cap L^{p}(\R^2)^6\hbox{ for } i\geq 0
	\end{eqnarray}
	for some $\tilde w_i\in\cW$, $i\geq 0$. Let us define $\tu_0:=\tv_0+w(\tv_0)$ and $\tu_i:=\tv_i+w_0(\tv_i)$ if $i\geq 1$.

{\em Part 2. Claim 1.}
 There holds
	\begin{equation}\label{EqSeries}
	\sum_{i=1}^{\infty}
	I_F(\tu_i)\leq \sum_{i=1}^{\infty}
	I_0(\tu_i)<+\infty.
	\end{equation}
	Indeed, observe that for $i\geq 1$
	$$(v_n+w(v_n))(\cdot+y_n^i)\chi_{B(0,\frac{n-2r}{2})}\weakto \tv_i+\tw_i\hbox{ weakly in } X $$
	so that the weak lower semicontinuity of $I_0$ implies
	\begin{eqnarray*}
		\sum_{i=1}^k I_0(\tu_i)&\leq&
		\sum_{i=1}^k I_0(\tv_i+\tw_i)\leq \sum_{i=1}^k \liminf_{n\to\infty} I_0\big((v_n+w(v_n))(\cdot+y_n^i)\chi_{B(0,\frac{n-2r}{2})}\big)\\
		&\leq& \liminf_{n\to\infty}\sum_{i=1}^k I_0\big((v_n+w(v_n))\chi_{B(y_n^i,\frac{n-2r}{2})}\big)
		\leq
		\liminf_{n\to\infty}I_0(v_n+w(v_n))=\liminf_{n\to\infty}I_0(u_n)
	\end{eqnarray*}
	for any $k\geq 0$.  Since $(u_n)$ and hence $(I_0(u_n))$ is bounded,  (\ref{EqSeries}) holds.

	{\it Part 2. Claim 2.}  Up to a subsequence, there holds
	\begin{eqnarray}\label{Ineq15}
	&&\sup_{k\geq 0}\limsup_{n\to\infty}I_F\Big(
	\sum_{i=0}^k\tu_i(\cdot-y_n^i)\Big)\leq 
	\sum_{i=0}^{\infty} I_F(\tu_i).
	\end{eqnarray}
	Observe that
	for given $k\geq 0$
	\begin{equation}
	\begin{aligned}\label{Eqnxi}
	I_F\Big(
	\sum_{i=0}^k\tu_i(\cdot-y_n^i)\Big)
	&= 
	I_F\Big(\sum_{i=0}^k\tu_i(\cdot-y_n^i)
	\chi_{\bigcup_{j=0}^kB(y_n^j,\frac{n-2r}{2})}\Big)\\
	&\hspace{0.4cm}+I_F\Big(\sum_{i=0}^k\tu_i(\cdot-y_n^i)\chi_{\R^2\setminus \bigcup_{j=0}^k B(y_n^j,\frac{n-2r}{2})}\Big).
	\end{aligned}
	\end{equation}
Concerning the first sum on the right hand side of \eqref{Eqnxi} note that, for given $0\leq j\leq k$ 
\begin{eqnarray}\label{eq:convClaim2}
	\Big|\sum_{0\leq i\leq k,i\neq j}\tu_i(\cdot-(y_n^i-y_n^j))\chi_{B(0,\frac{n-2r}{2})}\Big|_{p}
	&\leq&
	\sum_{0\leq i\leq k,i\neq j}\Big|\tu_i\chi_{B(y_n^j-y_n^i,\frac{n-2r}{2})}\Big|_{p}\\
	&\to& 0\nonumber
\end{eqnarray}
as $n\to\infty$,
since 
$$B\Big(y_n^i-y_n^j,\frac{n-2r}{2}\Big)\subset \R^2\setminus B\Big(0,\frac{n-2r}{2}\Big)$$
for $i\neq j$. The convergence in \eqref{eq:convClaim2} also holds for $p=2$. Taking into account the uniform continuity of $I_F$ on bounded sets we obtain
	\begin{eqnarray*}
\lefteqn{I_F\Big(\sum_{i=0}^k\tu_i(\cdot-y_n^i)
\chi_{\bigcup_{j=0}^kB(y_n^j,\frac{n-2r}{2})}\Big)} \\
&=&
I_F\Big(\sum_{i=0}^k\sum_{j=0}^k\tu_i(\cdot-y_n^i)
\chi_{B(y_n^j,\frac{n-2r}{2})}\Big)\\
&=&I_F\Big(\sum_{i=0}^k \tu_i(\cdot-y_n^i)
\chi_{B(y_n^i,\frac{n-2r}{2})}+\sum_{0\leq i\neq j\leq k}\tu_i(\cdot-y_n^i)
\chi_{B(y_n^j,\frac{n-2r}{2})}\Big)\\
&\to& \sum_{i=0}^k  I_F(\tu_i)
\end{eqnarray*}
as $n\to\infty$.
Finally, concerning the second sum on the right hand side of \eqref{Eqnxi} observe that for any $i\geq 0$
$$\big|\tu_i(\cdot-y_n^i)\chi_{\R^2\setminus \bigcup_{j=0}^k B(y_n^j,\frac{n-2r}{2})}\big|_p\leq 
\big|\tu_i(\cdot-y_n^i)\chi_{\R^2\setminus B(y_n^i,\frac{n-2r}{2})}\big|_p=
\big|\tu_i\chi_{\R^2\setminus B(0,\frac{n-2r}{2})}\big|_p\to 0$$
as $n\to \infty$. The same convergence also holds for $p=2$. Taking the $\limsup_{n\to \infty}$ and the $\sup_{k\geq 0}$ in \eqref{Eqnxi} we conclude the proof of \eqref{Ineq15}.\\
{\it Part 2. Claim 3.}		
For any $k\geq 0$ 
		\begin{eqnarray}\label{eq:clam2.3}
			&&\liminf_{n\to\infty}\int_{\R^2}|v_n+w(v_n)|^2\,dx-\sum_{i=0}^k\int_{\R^2}|\tv_i+\tw_i|^2\,dx	\\\nonumber
			&&\hspace{20mm}\geq \liminf_{n\to\infty} \int_{\R^2}|v_n+w(\tv_0)+\sum_{i=1}^kw_0(\tv_i)(\cdot-y_n^i)|^2\,dx-\sum_{i=0}^k\int_{\R^2}|\tu_i|^2\,dx\nonumber
		\end{eqnarray}
		as $n\to\infty$.
		 Indeed, observe that
	\begin{eqnarray*}
	&&\int_{\R^2}|v_n+w(v_n)|^2\,dx-\sum_{i=0}^k\int_{\R^2}|\tv_i+\tw_i|^2\,dx- \int_{\R^2}|v_n+w(\tv_0)-\sum_{i=1}^kw_0(\tv_i)(\cdot-y_n^i)|^2\,dx\\
	&&\hspace{5mm}+\sum_{i=0}^k\int_{\R^2}|\tu_i|^2\,dx\\
	&&=\int_{\R^2}|w(v_n)|^2\,dx-\sum_{i=0}^k\int_{\R^2}|\tw_i|^2\,dx	-\int_{\R^2}|w(\tv_0)-\sum_{i=1}^kw_0(\tv_i)(\cdot-y_n^i)|^2\,dx\\ &&\hspace{5mm}+\int_{\R^2}|w(\tv_0)|^2\,dx+\sum_{i=1}^k\int_{\R^2}|w_0(\tv_i)|^2\,dx\\
	&&\geq \sum_{i=0}^k\int_{B(y_n^i,\frac{n-2r}{2})}|w(v_n)|^2\,dx-\sum_{i=0}^k\int_{\R^2}|\tw_i|^2\,dx\\
	&&\hspace{5mm}+2\sum_{i=1}^k\int_{\R^2}\langle w(\tv_0), w_0(\tv_i)(\cdot-y_n^i)\rangle\,dx-2\sum_{1\leq i< j\leq k}\int_{\R^2}\langle w_0(\tv_i)(\cdot-y_n^i), w_0(\tv_j)(\cdot-y_n^j)\rangle\,dx.
\end{eqnarray*}		
Note that for any $i\geq 1$ and $R>0$
\begin{eqnarray*}
\lefteqn{\int_{\R^2}|\langle w(\tv_0), w_0(\tv_i)(\cdot-y_n^i)\rangle|\,dx} \\
&\leq &
\int_{B(0,R)}|\langle w(\tv_0), w_0(\tv_i)(\cdot-y_n^i)\rangle|\,dx
+|w(\tv_0)\chi_{\R^2\setminus B(0,R)}|_2|w_0(\tv_i)|_2\\
&\leq &
|w(\tv_0)|_2 |w_0(\tv_i)\chi_{B(-y_n^i,R)}|_2
+|w(\tv_0)\chi_{\R^2\setminus B(0,R)}|_2|w_0(\tv_i)|_2\\
&=& |w(\tv_0)\chi_{\R^2\setminus B(0,R)}|_2|w_0(\tv_i)|_2+o(1) 
\end{eqnarray*}
as $n\to\infty$. Letting $R\to\infty$ we obtain $\int_{\R^2}|\langle w(\tv_0), w_0(\tv_i)(\cdot-y_n^i)\rangle|\,dx=o(1)$
and we conclude the claim, since $w(v_n)(\cdot+y_n^i)\chi_{B(0,\frac{n-2r}{2})}\weakto \tw_i$ in $L^2(\R^2)^6$.

{\it Part 2. Claim 4.} 	For any $k\geq 0$
\begin{equation}\label{eq:claim2.4}
\lim_{n\to\infty}\int_{\R^2}(V(x)-V_0)
\Big|\sum_{i=0}^k\tu_i(\cdot-y_n^i)\Big|^2\,dx=\int_{\R^2}(V(x)-V_0)
|\tu_0|^2\,dx.
\end{equation}
Note that if  $i\geq j$ and $i\geq 1$, then for any $R>0$ we have
\begin{eqnarray*}
	\lefteqn{\int_{\R^2}|V(x)-V_0|\big|\langle \tu_i(\cdot-y_n^i), \tu_j(\cdot-y_n^j)\rangle\big|\,dx}\\
	&\leq &
	\int_{B(-y_n^i,R)}|V(x+y_n^i)-V_0|\big|\langle \tu_i, \tu_j(\cdot-(y_n^j-y_n^i))\rangle\big|\,dx\\
	&&+|(V(x)-V_0)\chi_{\R^2\setminus B(0,R)}|_{\frac{p}{p-2}}|\tu_i|_p
	|\tu_j|_p\\
	&=& |(V(x)-V_0)\chi_{\R^2\setminus B(0,R)}|_{\frac{p}{p-2}}|\tu_i|_p
	|\tu_j|_p+o(1) 
\end{eqnarray*}
as $n\to\infty$. Letting $R\to\infty$ we conclude the claim.
		
	{\it Part 2. Claim 5.} Up to a subsequence we have
	\begin{eqnarray}\label{Claim4_1}
	&& w(v_n)\chi_{B(0,\frac{n-2r}{2})}\to w(\tv_0)\hbox{ in }L^{2}(\R^2)^6\cap L^{p}(\R^2)^6,\\\label{Claim4_2}
	&&w(v_n)(\cdot+y_n^i)\chi_{B(0,\frac{n-2r}{2})}\to w_0(\tv_i)\hbox{ in }L^{2}(\R^2)^6\cap L^{p}(\R^2)^6,
	\end{eqnarray}
	as $n\to\infty$.\\
	Let us denote $\nu_n^k:=v_n+w(\tv_0)+\sum_{i=1}^kw_0(\tv_i)(\cdot-y_n^i)$. 	Since the map 
	$$L^p(\R^2)^6\ni u\mapsto \frac12\int_{\R^2}(V(x)-V_0)|u|^2\,dx+I_F(u)$$
	is uniformly continuous on bounded sets, then in view of \eqref{eq:phink} for every $\eps>0$ there is $k_0\geq 0$ such that for any $k\geq k_0$ there is $n_0=n_0(k)$ such that for $n\geq n_0$ one obtains 
\begin{eqnarray}\label{eq:das}
	&&\eps+\frac{1}{2}\int_{\R^2}(V(x)-V_0)
	\Big|\sum_{i=0}^k\tu_i(\cdot-y_n^i)\Big|^2\,dx+
	I_F\Big(\sum_{i=0}^k\tu_i(\cdot-y_n^i)\Big)+\frac{1}{2}\int_{\R^2}V_0
	|\nu_n^k|^2\,dx\\\nonumber
	&&\geq \frac{1}{2}\int_{\R^2}(V(x)-V_0)
	|\nu_n^k|^2\,dx+
	I_F(\nu_n^k)+\frac{1}{2}\int_{\R^2}V_0
	|\nu_n^k|^2\,dx= I(\nu_n^k)\\\nonumber
	&&\geq
	I(u_n)=\frac{1}{2}\int_{\R^2}(V(x)-V_0)
	|u_n|^2\,dx+
	I_F(u_n)+\frac{1}{2}\int_{\R^2}V_0
	|u_n|^2\,dx.\nonumber
\end{eqnarray}
Moreover
\begin{eqnarray*}
	\liminf_{n\to\infty}\int_{\R^2}(V(x)-V_0)|u_n|^2\,dx
	&\geq&  \int_{\R^2}(V(x)-V_0)|\tv_0+\tw_0|^2\,dx,
\end{eqnarray*}
and
	\begin{eqnarray}\label{eq:claimI_F}
	\liminf_{n\to\infty}I_F(u_n)
	&\geq&\liminf_{n\to\infty}\sum_{i=0}^k I_F\big((v_n+w(v_n))\chi_{B(y_n^i,\frac{n-2r}{2})}\big)\\
	&\geq& \sum_{i=0}^k \liminf_{n\to\infty}I_F\big((v_n+w(v_n))\chi_{B(y_n^i,\frac{n-2r}{2})}\big)
	\geq
	\sum_{i=0}^k I_F(\tv_i+\tw_i)\nonumber
\end{eqnarray}
for any $k\geq 0$.
Then
\begin{eqnarray*}
\liminf_{n\to\infty}I(u_n)&\geq& 
\frac12\int_{\R^2}(V(x)-V_0)|\tv_0+\tw_0|^2\,dx+
\sum_{i=0}^k I_F(\tv_i+\tw_i)\\
&&+\liminf_{n\to\infty}\frac12\int_{\R^2}V_0|v_n+w(v_n)|^2\,dx\\
&=& I(\tv_0+\tw_0)+\sum_{i=1}^k I_0(\tv_i+\tw_i)-
\sum_{i=0}^k \frac12\int_{\R^2}V_0|\tv_i+\tw_i|^2\,dx\\
&&+\liminf_{n\to\infty}\frac12\int_{\R^2}V_0|v_n+w(v_n)|^2\,dx
\\
&\geq & I(\tu_0)+\sum_{i=1}^k I_0(\tu_i)-
\sum_{i=0}^k\frac12\int_{\R^2}V_0|\tv_i+\tw_i|^2\,dx\\
&&+\liminf_{n\to\infty}\frac12\int_{\R^2}V_0|v_n+w(v_n)|^2\,dx\\ 
	&\geq&	 \frac{1}{2}\int_{\R^2}(V(x)-V_0)|\tu_0|^2\,dx+\sum_{i=0}^k I_F(\tu_i)+\liminf_{n\to\infty}\frac{1}{2}\int_{\R^2}V_0
	|\nu_n^k|^2\,dx,
\end{eqnarray*}
where the last inequality follows from \eqref{eq:clam2.3}.

On the other hand, taking into account \eqref{eq:das} and then \eqref{eq:claim2.4}, \eqref{Ineq15} we get
\begin{eqnarray*}
	\liminf_{n\to\infty}I(u_n)&\leq&  \eps+  \frac{1}{2}\int_{\R^2}(V(x)-V_0)|\tu_0|^2\,dx+\sum_{i=0}^{\infty} I_F(\tu_i)+\sup_{k\geq 0}\liminf_{n\to\infty}\frac{1}{2}\int_{\R^2}V_0
	|\nu_n^k|^2\,dx,
\end{eqnarray*}
hence
\begin{equation}\label{eq:equalityClaim}
	\liminf_{n\to\infty}I(u_n)= \frac{1}{2}\int_{\R^2}(V(x)-V_0)|\tu_0|^2\,dx+\sum_{i=0}^{\infty} I_F(\tu_i)+\sup_{k\geq 0}\liminf_{n\to\infty}\frac{1}{2}\int_{\R^2}V_0
	|\nu_n^k|^2\,dx.
\end{equation}
Therefore we get equalities in the above considerations, in particular $\tu_i=\tv_i+\tw_i$ for $i\geq 0$ and in \eqref{eq:clam2.3} equality holds for $k=\infty$, so that $|w(v_n)(\cdot+y_n^i)\chi_{B(0,\frac{n-2r}{2})}|_2\to |\tw_i|_2$ passing to a subsequence, and then $w(v_n)(\cdot+y_n^i)\chi_{B(0,\frac{n-2r}{2})}\to \tw_i$ in $L^2(\R^2)^6$ for $i\geq 0$. Moreover from equality in \eqref{eq:claimI_F} we get
	\begin{eqnarray*}
		&&\liminf_{n\to\infty}I_F\big(u_n\chi_{B(0,\frac{n-2r}{2})})=I_F(\tv_0+w(\tu_0)),\\
		&&\liminf_{n\to\infty} I_F\big(u_n(\cdot+y_n^i)\chi_{B(0,\frac{n-2r}{2})})=I_F(\tv_i+w_0(\tu_i))
	\end{eqnarray*}
for $i\geq 1$.
	Taking into account \eqref{Eqxnxm1} and arguing as in the proof of (I3) from Lemma~\ref{prop:properties}, passing to a subsequence, we obtain
	\begin{eqnarray*}
		&&u_n\chi_{B(0,\frac{n-2r}{2})}\to \tv_0+w(\tv_0)\hbox{ in }L^{p}(\R^2)^6,\\
		&&u_n(\cdot+y_n^i)\chi_{B(0,\frac{n-2r}{2})}\to \tv_i+ w_0(\tv_i)\hbox{ in }L^{p}(\R^2)^6. 
	\end{eqnarray*}
	Therefore  \eqref{Claim4_1} and \eqref{Claim4_2} are satisfied. 
	
{\em Part 3. Proof of (d).}	
Now we complete the proof.	Note that we have already proved (a)--(c). Since $v_n(\cdot+y_n^i)\chi_{B(0,\frac{n-2r}{2})}\weakto\tv_i$ and arguing similarly as in \eqref{eq:claimI_F} we obtain 
\eqref{eq:Decomp1}. Taking into account \eqref{eq:equalityClaim} we find \eqref{eq:Decomp2} and  using the fact that the equality holds in  \eqref{eq:claimI_F} for $k=\infty$ we conclude \eqref{eq:Decomp3}.
 
 In order to show \eqref{eq:Decomp4}, note that $u_n(\cdot+y_n^i)\to \tu_i$ for a.e. in $\R^2$ and for every $0\leq i<K+1$. Starting with $i=0$ and arguing similarly as in proof of Lemma \ref{prop:properties} we get
	$$\lim_{n\to\infty}\big(I_F(u_n)-I_F(u_n-\tu_0)\big)=I_F(\tu_0).
	$$
	Since we know from the beginning of the proof that $\lim_{n\to\infty} I_F(u_n)$ exists and is finite, we therefore can rearrange and find
	\begin{equation} \label{rel1}
	\lim_{n\to\infty}I_F(u_n)=I_F(\tu_0)+\lim_{n\to\infty}I_F(u_n-\tu_0).
	\end{equation}
	Next we define for $0\leq j < K$
	$$
	r_n^j := u_n - \sum_{i=0}^j \tilde u_i(\cdot-y_n^i)
	$$
	and observe that $r_n^j(\cdot+y_n^{j+1})\to \tilde u_{j+1}$ and $r_n^j(\cdot+y_n^{j+1})-r_n^{j+1}(\cdot+y_n^{j+1})=\tilde u_{j+1}$. 
    Starting with $j=0$ we obtain again, similarly as in proof of Lemma \ref{prop:properties}, that
	$$\lim_{n\to\infty}\big(I_F(r_n^0(\cdot +y_n^1))
	-I_F(r_n^1(\cdot +y_n^1))\big)=I_F(\tu_1)
	$$
	and since $r_n^0=u_n-\tilde u_0$ and by the shift-invariance of $I_F$ this implies 
	\begin{equation} \label{rel2}
	\lim_{n\to\infty}I_F(u_n-\tu_0)=\lim_{n\to\infty}I_F(r_n^0)=
	I_F(\tu_1)+\lim_{n\to\infty}I_F(r_n^1).
	\end{equation}
	Now, for any $0\leq j<K$
	$$\lim_{n\to\infty}\big(I_F(r_n^{j}(\cdot +y_n^{j+1}))
	-I_F(r_n^{j+1}(\cdot +y_n^{j+1}))\big)=I_F(\tu_{j+1})
	$$
	and hence
	\begin{equation} \label{rel3}
	\lim_{n\to\infty}I_F(r_n^j)=
	I_F(\tu_{j+1})+\lim_{n\to\infty}I_F(r_n^{j+1}).
	\end{equation}
	Collecting \eqref{rel1}, \eqref{rel2} and \eqref{rel3} for $0\leq j<K$ we obtain
	$$\lim_{n\to\infty}I_F(u_n)=\sum_{i=0}^{j+1}I_F(\tu_i)+\lim_{n\to\infty}I_F(r_n^{j+1}).$$
    Then by \eqref{eq:Decomp3} 
	$$\lim_{k\to\infty}\lim_{n\to\infty}I_F(r_n^{k})=0$$
	and we
	finally obtain \eqref{eq:Decomp4}.
\end{proof}

\begin{Cor}\label{Cor:weak} The map 
	$J'$ is weak-to-weak$^*$ continuous on $\cM$, i.e. if $(u_n)\subset\cM$ and $u_n\weakto u$ for some $u\in X$, then $J'(u_n)(\vp)\to J'(u)(\vp)$ for any $\vp\in X$. Similarly,  $J_0'$ is weak-to-weak$^*$ continuous on $\cM_0$ where $J_0(u)=\frac{1}{2}\|p_{\cV}(u)\|^2-I_0(u)$ .
\end{Cor}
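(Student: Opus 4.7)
The plan is to combine boundedness of $(J'(u_n))$ in $X^*$ with a density argument, reducing the claim to convergence against test functions $\varphi\in\cC_0^\infty(\R^2)^6$, and then to extract local strong compactness for $(u_n)$ from the constraint $u_n\in\cM$ by means of Theorem~\ref{ThSplit}. Since $u_n\weakto u$ in $X$, the sequence $(u_n)$ is bounded and, by (V), (F2) and (F3), so is $(J'(u_n))$ in $X^*$. As $\cC_0^\infty(\R^2)^6$ is dense in $X$ (Theorem~\ref{eq:Helmholtz}), it suffices to verify $J'(u_n)(\varphi)\to J'(u)(\varphi)$ for every $\varphi\in\cC_0^\infty(\R^2)^6$.

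Next I would apply Theorem~\ref{ThSplit} to the bounded sequence $(u_n)\subset\cM$. This provides, up to a subsequence, a profile $\tu_0\in\cM$ with $y_n^0=0$ such that $u_n\weakto\tu_0$ in $X$ and $u_n\to\tu_0$ in $L^p_{\mathrm{loc}}(\R^2)^6$ and a.e.\ in $\R^2$. Uniqueness of weak limits forces $\tu_0=u$, so $u\in\cM$; applying this extraction to every subsequence of $(u_n)$ yields $u_n\to u$ locally in $L^p$ and pointwise a.e.\ for the full sequence.

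It then remains to pass to the limit in each of the three terms of
\[
J'(u_n)(\varphi)=b_L(u_n,\varphi)-\int_{\R^2}V(x)\langle u_n,\varphi\rangle\,dx-\int_{\R^2}\langle f(x,u_n),\varphi\rangle\,dx.
\]
Since $\cW\subset\ker L$, the bilinear form reduces to $b_L(u_n,\varphi)=b_L(v_n,\varphi)=\int_{\R^2}\cnabla\times v_n\cdot\cnabla\times\varphi\,dx$, and the weak convergence $v_n:=p_\cV(u_n)\weakto p_\cV(u)$ in $H^1(\R^2)^6$ combined with $\cnabla\times\varphi\in L^2(\R^2)^6$ yields convergence to $b_L(u,\varphi)$. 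The second term passes to the limit since $V\varphi\in L^2(\R^2)^6$ (using (V) and the compact support of $\varphi$) together with $u_n\weakto u$ in $L^2(\R^2)^6$. For the nonlinear term, restrict to $\mathrm{supp}\,\varphi$: a further subsequence admits an $L^p$-dominant $h$ and converges a.e., so (F3) together with continuity of $f$ in $u$ allows Lebesgue's dominated convergence theorem to conclude.

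The main obstacle I anticipate is that $\cW$ does not embed compactly into $L^p_{\mathrm{loc}}(\R^2)^6$, so weak-to-weak$^*$ continuity of $J'$ cannot hold on all of $X$. The Nehari-type constraint $u_n\in\cM$ is what resolves it: via the profile decomposition of Theorem~\ref{ThSplit}, the $\cW$-component $w(v_n)$ is tied to $v_n$ strongly enough that its local $L^p$ oscillations are controlled. The same proof, with $V$ replaced by the constant $V_0$ and $\cM$ by $\cM_0$, handles $J_0'$ on $\cM_0$.
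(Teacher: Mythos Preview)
Your proof is correct and follows essentially the same route as the paper: both hinge on Theorem~\ref{ThSplit}(c) to upgrade weak convergence in $X$ to pointwise a.e.\ (and local $L^p$) convergence on $\cM$, which is exactly the missing compactness needed for the nonlinear term. The only cosmetic difference is that the paper dispenses with the density reduction and applies Vitali's convergence theorem directly to $\int\langle f(x,u_n),\varphi\rangle\,dx$ for arbitrary $\varphi\in X$ (uniform integrability coming from the $L^2\cap L^p$ bounds), whereas you first restrict to $\varphi\in\cC_0^\infty(\R^2)^6$ and then use dominated convergence with a local $L^p$ dominant; one minor caveat is that your phrase ``pointwise a.e.\ for the full sequence'' does not literally follow from the subsequence principle, but this is harmless since the subsequence argument applied at the level of the scalar sequence $J'(u_n)(\varphi)$ suffices.
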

\begin{proof}
	Since $(u_n)$ is bounded, in view of Theorem~\ref{ThSplit}(c) for $i=0$, up to a subsequence $u_n(x)\to u(x)$ for a.e. $x\in\R^2$. By Vitaly's convergence theorem we infer that $J'(u_n)(\vp)\to J'(u)(\vp)$ and $J'_0(u_n)(\vp)\to J_0'(u)(\vp)$ for any $\vp\in X$.
\end{proof}

\section{Proof of Theorem \ref{th:main} and Theorem \ref{th:main2}}\label{sec:proof}

\begin{Lem}\label{ineq:Nehari}
Suppose that (F5) holds. If $u\in X$, $\psi\in\W$ and $t\geq 0$, then
$$J(u)\geq J(t(u+\psi))+J'(u)\Big(\frac{1-t^2}{2}u-t^2\psi\Big).$$
\end{Lem}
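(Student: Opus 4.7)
The plan is to compute $\Delta := J(u) - J(t(u+\psi)) - J'(u)\bigl(\tfrac{1-t^2}{2}u - t^2\psi\bigr)$ and show $\Delta \geq 0$ by splitting it into three pieces: kinetic, quadratic $V$-contribution, and nonlinear contribution.

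First, since $\psi \in \W = \ker L$, the bilinear form $b_L$ applied to any expression involving $\psi$ vanishes, so a direct expansion shows that all $\tfrac{1}{2}b_L(\cdot,\cdot)$-contributions in $\Delta$ cancel identically. Moreover, a routine pointwise computation gives
$$-\tfrac{1}{2}V|u|^2 + \tfrac{t^2}{2}V|u+\psi|^2 + V\bigl\langle u, \tfrac{1-t^2}{2}u - t^2\psi\bigr\rangle = \tfrac{t^2}{2}V|\psi|^2 \geq 0,$$
so the quadratic $V$-contribution to $\Delta$ is nonnegative by (V).

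Consequently the argument reduces to establishing the pointwise nonlinear inequality
$$F(x,u) - F(x,v) \leq \tfrac{1+t^2}{2}\langle f(x,u), u\rangle - t\langle f(x,u), v\rangle \quad \text{for all } t \geq 0,$$
where $v := t(u+\psi)$. Viewing the right-hand side as a quadratic in $t$ with nonnegative leading coefficient $\tfrac{1}{2}\langle f(x,u), u\rangle$, I would minimize over $t \geq 0$. When $\langle f(x,u), v\rangle \leq 0$ the minimum is attained at $t=0$ and equals $\tfrac{1}{2}\langle f(x,u), u\rangle$, which exceeds $F(x,u) \geq F(x,u) - F(x,v)$ by the second part of (F5) together with $F \geq 0$. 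When $\langle f(x,u), u\rangle = 0$, the second part of (F5) forces $F(x,u) = 0$, making $u$ a global minimum of $F(x,\cdot)$, so $f(x,u) = 0$ and the inequality is trivial. The remaining case $\langle f(x,u), u\rangle, \langle f(x,u), v\rangle > 0$ yields the optimum $t^\star = \langle f(x,u), v\rangle/\langle f(x,u), u\rangle$, and the inequality to be proved becomes
$$F(x,u) - F(x,v) \leq \tfrac{\langle f(x,u), u\rangle^2 - \langle f(x,u), v\rangle^2}{2\langle f(x,u), u\rangle},$$
which is precisely the estimate furnished by (F5).

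The main obstacle is that (F5) as stated provides this estimate only under the symmetry assumption $\langle f(x,u), v\rangle = \langle f(x,v), u\rangle$, whereas the pointwise inequality is needed for arbitrary $v$. I expect to close this gap by combining (F5) with the convexity of $F$ in its second argument (assumed at the outset of Section~\ref{sec:PS} and a consequence of (F4)--(F5), cf.\ Remark~\ref{rem:F5}): convexity furnishes the coarser bound $F(x,u) - F(x,v) \leq \langle f(x,u), u-v\rangle$, and a careful application of (F5) to symmetric partners (obtained for instance by rescaling along the line joining $u$ and $v$) should sharpen it to the required quadratic form. Once this pointwise inequality is in hand, integration together with the nonnegative $V$-contribution immediately yields $\Delta \geq 0$.
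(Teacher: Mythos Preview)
Your reduction is fine: the kinetic part cancels because $\psi\in\cW$, the $V$-contribution gives the nonnegative term $\tfrac{t^2}{2}\int V|\psi|^2$, and what remains is exactly the pointwise inequality $\varphi(t,x)\le 0$ with
\[
\varphi(t,x)=\Big\langle f(x,u),\frac{t^2-1}{2}u+t^2\psi\Big\rangle+F(x,u)-F\bigl(x,t(u+\psi)\bigr),
\]
which is precisely what the paper writes down.

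The gap is in how you try to verify $\varphi(t,x)\le0$. By freezing $v=t(u+\psi)$ and then minimising $\frac{1+t'^2}{2}\langle f(x,u),u\rangle - t'\langle f(x,u),v\rangle$ over an \emph{independent} parameter $t'$, you are asking for the (F5)--type estimate
\[
F(x,u)-F(x,v)\le \frac{\langle f(x,u),u\rangle^2-\langle f(x,u),v\rangle^2}{2\langle f(x,u),u\rangle}
\]
for an \emph{arbitrary} pair $(u,v)$, whereas (F5) only grants it under the symmetry $\langle f(x,u),v\rangle=\langle f(x,v),u\rangle$. Your proposed repair via convexity and ``rescaling along the line joining $u$ and $v$'' does not close this: convexity gives only $F(x,u)-F(x,v)\le \langle f(x,u),u-v\rangle=a-b$, and $a-b\le \frac{a^2-b^2}{2a}$ fails whenever $a>b>0$. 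No rescaling argument is indicated that produces a symmetric partner while preserving the needed inequality.

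The paper does something genuinely different: it keeps $v=t(u+\psi)$ \emph{coupled to $t$} and studies $t\mapsto\varphi(t,x)$ directly. One checks $\varphi(0,x)\le 0$ from the second half of (F5), $\varphi(t,x)\to-\infty$ from (F4), so the global maximum is attained at some $t_0>0$ where $\varphi'(t_0,x)=0$. This first--order condition reads (with $v=t_0(u+\psi)$, $a=\langle f(x,u),u\rangle$, $b=\langle f(x,u),v\rangle$, $c=\langle f(x,v),v\rangle$)
\[
2t_0\,b - t_0^2\,a = c,
\]
so that the quantity $c=\langle f(x,v),v\rangle$ --- which never enters your argument --- is tied to $a,b$. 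With this relation one rewrites $\varphi(t_0,x)\le0$ equivalently as $F(x,u)-F(x,v)\le\tfrac{a-c}{2}$ and, since $(b-t_0 a)^2\ge0$ forces $b^2\ge ac$, the (F5) bound $\frac{a^2-b^2}{2a}\le\frac{a-c}{2}$ holds; the paper then invokes (F5) (details referred to \cite{MederskiENZ}, Proposition~4.1) to conclude $\varphi(t_0,x)\le0$. The missing idea in your attempt is precisely this use of the stationarity condition $\varphi'(t_0,x)=0$, which supplies the extra relation involving $\langle f(x,v),v\rangle$ needed to bring (F5) into play.
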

\begin{proof}
We define a map $\varphi:[0,+\infty)\times \R^2\to \R$ such that
\begin{equation}\label{eq:defPhi}
\varphi(t,x):=
\Big\langle f(x,u),\frac{t^2-1}{2}u(x)+t^2\psi(x)\Big\rangle+F(x,u(x))-F(x,t(u(x)+\psi(x)).
\end{equation}
If $u(x)\neq 0$, then (F5) implies that  $\vp(0,x)\leq0$ and if $u(x)=0$ then also $\vp(0,x)=0$. Next, (F4) implies that $\vp(t,x)\to -\infty$ as $t\to\infty$. Then, for every fixed $x\in\R^2$, there is a global maximum point $t_0=t_0(x)>0$ of $\vp(x,\cdot)$. Thus $\vp'(t_0,x)=0$ and by (F5) we obtain that $\vp(t_0,x)\leq 0$; see similar arguments in proof of \cite{MederskiENZ}[Proposition 4.1].  Therefore 
$$J(t(u+\psi))+J'(u)\Big(\frac{1-t^2}{2}u-t^2\psi\Big)- J(u)=-\frac {t^2}{2}\int_{\R^2}V(x)|\psi|^2\,dx+\int_{\R^2}\vp(t,x)\,dx\leq 0$$ and we conclude.
\end{proof}

\begin{Rem} \label{rem:F5}
Observe that the inequality $\vp(1,x)\leq 0$ implies the convexity of $F$ in $u$. Therefore this assumption is not explicitly stated in Theorem \ref{th:main}.
\end{Rem}

\begin{Lem}\label{ineq:AR}
	Suppose that (F6) holds. For any $\eps>0$ there is a constant $c_\eps>0$ such that if $u=v+w\in \cV\oplus \cW$, then
	$$J(u)\geq J(tv)+J'(u)\Big(\frac{1-t^2}{2}u+t^2w\Big)-\eps t^2 |u|_2|w|_2-t^2c_\eps |u|_p^{p-1}|w|_p$$
	for any  $0\leq t\leq \sqrt{1-\frac{2}{\gamma}}$.
\end{Lem}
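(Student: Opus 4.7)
The plan is to reduce the inequality to a pointwise estimate on the integrand obtained by expanding $J(u)-J(tv)-J'(u)(\phi)$ with $\phi:=\frac{1-t^2}{2}u+t^2 w$. Using the decomposition $u=v+w\in\cV\oplus\cW$, the facts that $\cW\subset\ker L$ and $b_L(v,w)=0$, together with straightforward algebra on the quadratic-in-$V$ terms (in which the cross terms $\int V\langle v,w\rangle\,dx$ cancel), I obtain
\begin{equation*}
J(u)-J(tv)-J'(u)(\phi) = \frac{t^2}{2}\int_{\R^2} V(x)|w|^2\,dx + A + B,
\end{equation*}
where
\begin{equation*}
A := \int_{\R^2}\Big[F(x,tu)-F(x,u)+\frac{1-t^2}{2}\langle f(x,u),u\rangle\Big]\,dx,
\end{equation*}
\begin{equation*}
B := \int_{\R^2}\Big[F(x,tv)-F(x,tu)+t^2\langle f(x,u),w\rangle\Big]\,dx.
\end{equation*}
Since $V\geq\essinf V>0$ by (V), the first term is non-negative, so it suffices to prove $A\geq 0$ and $B\geq-\eps t^2|u|_2|w|_2-c_\eps t^2|u|_p^{p-1}|w|_p$.

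For the term $A$, (F6) and $F\geq 0$ yield the pointwise bound
\begin{equation*}
F(x,tu)-F(x,u)+\frac{1-t^2}{2}\langle f(x,u),u\rangle \geq -F(x,u)+\frac{\gamma(1-t^2)}{2}F(x,u) = F(x,u)\Big[\frac{\gamma(1-t^2)}{2}-1\Big],
\end{equation*}
which is non-negative precisely when $t^2\leq 1-2/\gamma$. This is exactly where the Ambrosetti--Rabinowitz condition meets the admissible range of $t$ stated in the lemma.

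For the term $B$, I use the convexity of $F$ in $u$ (the standing hypothesis of Section~\ref{sec:PS} and also explicit in Theorem~\ref{th:main2}): from $F(x,tv)-F(x,tu)\geq\langle f(x,tu),tv-tu\rangle=-t\langle f(x,tu),w\rangle$ one gets
\begin{equation*}
F(x,tv)-F(x,tu)+t^2\langle f(x,u),w\rangle \geq t\langle tf(x,u)-f(x,tu),w\rangle.
\end{equation*}
Assumptions (F2) and (F3) provide the growth bound $|f(x,\xi)|\leq\eps|\xi|+c_\eps|\xi|^{p-1}$. Applying it to $\xi=u$ and to $\xi=tu$, and using $0\leq t\leq 1$ together with $t^{p-1}\leq t$ (valid since $p>2$), I obtain $|tf(x,u)-f(x,tu)|\leq 2\eps t|u|+2c_\eps t|u|^{p-1}$. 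Multiplying by $t|w|$, integrating, and applying H\"older's inequality produces the required bound on $B$ after relabeling the constants.

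The principal subtlety is in choosing the splitting: going through $F(x,tu)$ separates the AR-type contribution $A$, where the upper bound $t^2\leq 1-2/\gamma$ is forced, from the convexity piece $B$, in which the combination $tf(x,u)-f(x,tu)$ naturally supplies an extra $t$-factor producing the needed $t^2$-scaling of the error terms. Without convexity, a direct mean-value expansion of $F(x,tv)-F(x,tu)$ introduces spurious $|w|_2^2$ and $|w|_p^p$ contributions which the statement cannot absorb, so the convexity assumption that is standing throughout this part of the paper is essential.
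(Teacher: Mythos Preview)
Your proof is correct, but the paper takes a shorter and slightly more general route that avoids the intermediate point $F(x,tu)$ and does not use convexity of $F$ at all. With $\psi=-w$, the paper works directly with the integrand
\[
\varphi(t,x)=\frac{t^2-1}{2}\langle f(x,u),u\rangle - t^2\langle f(x,u),w\rangle + F(x,u) - F(x,tv),
\]
drops $-F(x,tv)\leq 0$ by nonnegativity of $F$, and uses (F6) together with $t^2\leq 1-2/\gamma$ to obtain $\frac{t^2-1}{2}\langle f(x,u),u\rangle + F(x,u)\leq 0$, so that $\varphi(t,x)\leq -t^2\langle f(x,u),w\rangle$. The error term then comes from a single application of the growth bound on $|f(x,u)|$ and H\"older's inequality. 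Your splitting $A+B$ reproduces the same identity, and your treatment of $A$ is exactly the paper's cancellation; the difference lies in $B$, where you invoke convexity to pass through $F(x,tu)$ and arrive at the combination $tf(x,u)-f(x,tu)$. This works (convexity is indeed a standing hypothesis from Section~\ref{sec:PS} onward), but it is an unnecessary detour: the paper's estimate shows that convexity is not actually needed for this lemma, only (F6) and $F\geq 0$.
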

\begin{proof}
	Observe that by (F6)
	\begin{eqnarray*}
	\varphi(t,x)&=&
	\Big\langle f(x,u),\frac{t^2-1}{2}u\Big\rangle-t^2\langle f(x,u),w\rangle+F(x,u)-F(x,tv),\\
	&\leq&
	-t^2\langle f(x,u),w\rangle-F(x,tv)\leq -t^2\langle f(x,u),w\rangle
	\end{eqnarray*}
	Then we obtain
	\begin{eqnarray*}
	J(tv)+J'(u)\Big(\frac{1-t^2}{2}u+t^2w\Big)- J(u)&=&-\frac {t^2}{2}\int_{\R^2}V(x)|w|^2\,dx+\int_{\R^2}\vp(t,x)\,dx\\
	&\leq& -t^2\int_{\R^2}\langle f(x,u),w\rangle\,dx\\
	&\leq& \eps t^2 |u|_2|w|_2+t^2c_\eps |u|_p^{p-1}|w|_p,
	\end{eqnarray*}
where the last inequality follows from \eqref{ceps} 
	and we conclude.
\end{proof}

\begin{Lem}\label{lem:bounded}
 If (F5), or (F4') and (F6) hold, then $(M)_0^\beta$ is satisfied for every $\beta>0$.
\end{Lem}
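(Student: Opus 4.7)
The condition $(M)_0^\beta$ concerns sequences $(v_n)\subset X^+=\cV$ with $\wt J=J\circ m$. Setting $u_n=m(v_n)=v_n+w(v_n)\in\cM$, the relation $J'(u_n)|_{\cW}=0$ combined with $\wt J'(v_n)=J'(u_n)|_{\cV}$ ensures $(1+\|v_n\|)J'(u_n)\to 0$ on the whole of $X^*$ along any Cerami sequence for $\wt J$. I therefore have full freedom to pair $J'(u_n)$ with any test function in $X$.

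\textbf{Part (a), boundedness.} The first move is to extract the Ambrosetti--Rabinowitz-type remainder
\begin{equation*}
	J(u_n)-\tfrac12 J'(u_n)(u_n)=\int_{\R^2}\bigl(\tfrac12\langle f(x,u_n),u_n\rangle-F(x,u_n)\bigr)\,dx=O(1),
\end{equation*}
using $J'(u_n)(u_n)=J'(u_n)(v_n)=o(1)$. Under (F4') and (F6), this quantity dominates a positive multiple of $\int F\,dx$, so $\int F(x,u_n)\,dx=O(1)$ and hence $|u_n|_p=O(1)$ via (F4'). Testing $J'(u_n)(v_n)=o(1)$, exploiting $V=V_0$ and $\cV\perp\cW$ in $L^2$, and estimating the nonlinear term with \eqref{ceps}, Hölder and Sobolev yields
\begin{equation*}
	(1-V_0/k^2)\|v_n\|^2\le\eps|u_n|_2|v_n|_2+C|v_n|_p+o(1).
\end{equation*}
A parallel test against $w_n\in\cW$, now using $J'(u_n)(w_n)=0$, gives $V_0|w_n|_2^2=-\int\langle f(x,u_n),w_n\rangle$, from which $|w_n|_2=O(1+\|v_n\|)$. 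Feeding this into the first inequality and choosing $\eps$ small closes the bound on $\|v_n\|$.

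Under (F5) alone the key integral is merely nonnegative, so the chain above collapses and Lemma~\ref{ineq:Nehari} must be invoked directly. Arguing by contradiction with $\|v_n\|\to\infty$, I set $\hat v_n:=v_n/\|v_n\|$ and apply Lemma~\ref{ineq:Nehari} at $u=u_n$, $\psi=-w(v_n)$, and $t=s/\|v_n\|$ for a fixed $s>0$. Since $J'(u_n)(u_n)=o(1)$ and $J'(u_n)(w_n)=0$, the remainder vanishes and $J(u_n)\ge J(s\hat v_n)+o(1)$. In the vanishing alternative, Lions's lemma gives $\hat v_n\to 0$ in $L^p(\R^2)^6$, so $J(s\hat v_n)\to\tfrac{s^2}{2}(1-V_0/k^2)\to\infty$ with $s\to\infty$, contradicting $J(u_n)=O(1)$. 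In the non-vanishing alternative, a translation by $y_n\in\Z^2$ (using $\Z^2$-periodicity of $f$) produces a nontrivial weak limit $\hat v$; then condition (I7) applied to $\|v_n\|\hat v_n=v_n$, together with the superquadratic lower bound (F4) and the convexity of $F$ forced by (F5) (see Remark~\ref{rem:F5}), again drives $I(u_n)/\|v_n\|^2\to\infty$ and contradicts boundedness of $J(u_n)$. I expect this rescaled Nehari step to be the hardest part: the inequality of Lemma~\ref{ineq:Nehari} has to survive both the rescaling $t=s/\|v_n\|$ and the passage to the weak-$\cT$ limit, which is why the author warns that the argument is nonstandard even under (F6).

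\textbf{Part (b), discreteness.} Assume finitely many critical orbits of $\wt J$ in $\wt J_0^\beta$, and let $(v_n),(v_n')$ be two Cerami sequences as in (a) with $\|v_n-v_n'\|<m_0^\beta$ eventually. By (a) both $u_n=m(v_n)$ and $u_n'=m(v_n')$ are bounded in $\cM$, so Theorem~\ref{ThSplit} provides profile decompositions. Corollary~\ref{Cor:weak} identifies each nontrivial profile with a critical point of $J$ or $J_0$, hence with one of finitely many $\Z^2$-orbits. Choosing $m_0^\beta$ strictly smaller than the minimal pairwise $\cV$-distance between the finitely many admissible profile sums forces the two decompositions to share the same profiles and translations for large $n$; passing through the homeomorphism $m^{-1}$, which is continuous on bounded subsets of $\cM$, then delivers $\liminf\|v_n-v_n'\|=0$ and completes the verification of $(M)_0^\beta$.
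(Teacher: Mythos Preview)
Your argument has a real gap in the (F5) case, precisely where you anticipated difficulty. In the non-vanishing alternative you invoke (I7) for the rescaled sequence $\hat v_n=v_n/\|v_n\|$, but (I7) requires \emph{strong} convergence $\hat v_n\to\hat v\neq 0$ in $\cV$; Lions's lemma only hands you weak convergence after translation. The convexity of $F$ does not repair this: you need to show $I(u_n)/\|v_n\|^2\to\infty$, and nothing controls the contribution of $w_n$ pointwise or locally. The paper closes this gap with two ingredients you are missing. First, it rescales not by $\|v_n\|$ but by
\[
s_n^2=\tfrac12\|v_n\|^2-\tfrac14\essinf V\,|v_n|_2^2+\tfrac14\essinf V\,|w_n|_2^2+\tilde c_\eps|u_n|_p^p,
\]
so that $|w_n|_2/s_n$ and $|u_n|_p/s_n$ are automatically bounded. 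Second, and this is the key idea, it uses that $\cl_{L^p}\cV\cap\cW=\{0\}$, which gives a continuous $L^p$-projection and hence $|u_n|_p^p\geq c'|v_n|_p^p$. Once $\inf_n|\tilde v_n|_p>0$ is established (your vanishing step does this), one gets directly $\int F(x,u_n)\geq -\eps|u_n|_2^2+\tilde c_\eps c'|v_n|_p^p$, and then $J(u_n)/s_n^2\leq \tfrac12\|\tilde v_n\|^2-\tilde c_\eps c' s_n^{p-2}|\tilde v_n|_p^p\to-\infty$. No weak limit, no (I7).

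Two smaller points. Your direct (F4')(F6) argument (AR remainder, then test against $v_n$ and $w_n$) is in fact cleaner than the paper's route, which still runs the $s_n$-rescaling together with Lemma~\ref{ineq:AR}; but you silently assume $V\equiv V_0$ to kill the cross term $\int V\langle w_n,v_n\rangle$, whereas the lemma is stated for general $V$ satisfying (V). For part~(b), the paper avoids the full profile decomposition entirely: it sets $m=\inf\{\|v-v'\|:v\neq v',\ J'(m(v))=J'(m(v'))=0\}>0$ and runs a simple dichotomy on $|v_n-v_n'|_p$. If this vanishes, pairing $J'(u_n)-J'(u_n')$ with $v_n-v_n'$ gives $\|v_n-v_n'\|\to 0$; if not, Lions and Corollary~\ref{Cor:weak} force $\|v-v'\|\geq m$ for the weak limits, so $\|v_n-v_n'\|<m$ puts you back in the first case. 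Your matching-of-profiles sketch is both heavier and harder to make rigorous.
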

\begin{proof}
(a) Take $\beta>0$ and suppose that $u_n=v_n+w_n\in\cM$ is such that $$0\le\liminf_{n\to\infty}J(u_n)\le\limsup_{n\to\infty}J(u_n)\leq\beta\hbox{ and }J'(u_n)(1+\|v_n\|)\to 0$$ as $n\to\infty$.
Observe that  by \eqref{eq:ineqFlp}
$$\frac12\|v_n\|^2-\Big(\frac12\essinf V-\eps\Big)|v_n|_2^2\geq J(u_n)+\Big(\frac12\essinf V-\eps\Big) |w_n|_2^2+\tilde{c}_\eps|u_n|_p^p$$
and if we take $\eps=\frac14\essinf V$ then
\begin{equation}\label{eq:ineqp}
\|v_n\|^2-\frac12\essinf V|v_n|_2^2\geq J(u_n)+\frac12\|v_n\|^2-\frac14\essinf V|v_n|_2^2+\frac14\essinf V|w_n|_2^2+\tilde{c}_\eps|u_n|_p^p.
\end{equation}
Let $s_n:=\Big(\frac12\|v_n\|^2-\frac14\essinf V|v_n|_2^2+\frac14\essinf V|w_n|_2^2+\tilde{c}_\eps|u_n|_p^p\Big)^{1/2}$
and suppose by a contradiction that $(u_n)$ is unbounded, that is, passing to a subsequence $s_n\to\infty$.
Let $\tv_n:=v_n/s_n$ and we may assume that $\tv_n\rightharpoonup \tv$ in $X$ for some $\tv\in\cV$, and $\tv_n(x)\to \tv(x)$ a.e. in $\R^2$. 

Let us show that $\inf_{n\in\N}|\tv_n|_p>0$ by a contradiction argument both in the case where (F5) and (F4'), (F6) holds. Therefore assume (passing to a subsequence) that $\tv_n\to 0$ in $L^p(\R^2)^6$. By \eqref{ceps} we obtain
\begin{equation}\label{eq:convF0}
\int_{\R^2}F(x,s\tv_n)\,dx\to 0\hbox{ for any }s>0.
\end{equation}

First assume that (F5) is satisfied. Take $\gamma\in (0,1)$ such that
\begin{equation}\label{eq:gammadef}
\esssup V\leq \frac12\gamma \essinf V+(1-\gamma)k^2
\end{equation} and $s=2\sqrt{\beta/\gamma}$. In view of Lemma~\ref{ineq:Nehari}, the fact that $J'(u_n)(1+\|v_n\|\to 0$ and \eqref{eq:convF0} we obtain the following estimate 
\begin{eqnarray*}\label{EqIneq2}
\beta&\geq &\liminf_{n\to\infty}J(u_n)\geq 
 \liminf_{n\to\infty} J(s\tv_n)
 +\liminf_{n\to\infty}  J'(u_n)\Big(\frac{1-(s/s_n)^2}{2}u_n+(s/s_n)^2w_n\Big)
 \\
&=& \liminf_{n\to\infty} J(s\tv_n)\geq\frac{s^2}{2}\liminf_{n\to\infty}\big(\|\tv_n\|^2-\esssup V|\tv_n|_2^2\big).
\end{eqnarray*}
Using $\|\tilde v_n\|-\frac{1}{2}\essinf V |\tilde v_n|_2\geq 1$ by \eqref{eq:ineqp} and \eqref{eq:gammadef}  we obtain the contradiction
\begin{eqnarray*}
\beta 
&\geq& \frac{s^2}{2}\gamma\liminf_{n\to\infty}\Big(\|\tv_n\|^2-\frac12\essinf V|\tv_n|_2^2\Big)\geq 
\frac{s^2}{2}\gamma=2\beta.
\end{eqnarray*}

Next, assume that (F4') and (F6) are satisfied. Then
$$J(u_n)-\frac12J'(u_n)(u_n)\geq 
\frac{\gamma-2}{2}
\int_{\R^2}F(x,u_n)\, dx$$
and $(u_n)$ is bounded in $L^p(\R^3)^6$. Hence $(w_n)$ is bounded in $L^p(\R^3)^6$. 
Note that $s_n\geq \delta |u_n|_2$ for some constant $\delta>0$. 
Hence also $s_n\geq \delta |w_n|_2$  and taking $\gamma\in (0,1)$ as in \eqref{eq:gammadef}, $s=2\sqrt{\beta/\gamma}$ and $\eps=\frac18\gamma\delta^2$, in view of Lemma \ref{ineq:AR}, the fact that $J'(u_n)(1+\|v_n\|\to 0$ and \eqref{eq:convF0} we obtain the following estimate
\begin{eqnarray*}
	\beta&\geq &\liminf_{n\to\infty}J(u_n)\geq 
	\liminf_{n\to\infty} J(s\tv_n)
	+\liminf_{n\to\infty}  J'(u_n)\Big(\frac{1-(s/s_n)^2}{2}u_n+(s/s_n)^2w_n\Big)\\
	&&+\liminf_{n\to\infty} \big(-\eps (s/s_n)^2 |u_n|_2|w_n|_2-(s/s_n)^2c_\eps |u_n|_p^{p-1}|w_n|_p\big)\\
	&\geq& \liminf_{n\to\infty} J(s\tv_n)-\eps s^2\delta^{-2} =\frac{s^2}{2}\liminf_{n\to\infty}\big(\|\tv_n\|^2-\esssup V|\tv_n|_2^2\big)-\eps s^2\delta^{-2}.
\end{eqnarray*}	
Using as before $\|\tilde v_n\|-\frac{1}{2}\essinf V |\tilde v_n|_2\geq 1$, the definition \eqref{eq:gammadef} of $\gamma$ and the definition of $\epsilon$ we obtain the contradiction
\begin{eqnarray*}
	\beta&\geq& \frac{s^2}{2}\gamma\liminf_{n\to\infty}\Big(\|\tv_n\|^2-\frac12\essinf V|\tv_n|_2^2\Big)-\eps s^2\delta^{-2}\geq s^2\Big(
	\frac{\gamma}{2}-\eps \delta^{-2}\Big)=\frac32\beta.
\end{eqnarray*}

In both cases we have led the assumption that $\tv_n\to 0$ in $L^p(\R^2)^6$ to a contradiction. Hence, we continue by having $\inf_{n\in\N}|\tv_n|_p>0$. Let $\cl_{L^p}\cV$ denotes the closure of $\cV$ in $L^p(\R^2)^6$. Observe that $\cl_{L^p}\cV\cap \cW=\{0\}$. Using the continuity of the projection $\cl_{L^p}\cV\oplus \cW\to \cl_{L^p}\cV$ there is a constant $c'>0$ such that
\begin{equation}\label{projectionL^p}
|u_n|_p^p\geq c' |v_n|_p^p.
\end{equation}
Then by \eqref{eq:ineqFlp} we get
$$\int_{\R^2}F(x,u_n)\,dx\geq -\eps |u_n|_2^2+\tilde{c}_\eps|u_n|_p^p\geq -\eps |u_n|_2^2+\tilde{c}_\eps c' |v_n|_p^p.$$
Therefore taking $\eps=\frac{1}{2}\essinf V$ we see that
\begin{eqnarray*}
	J(u_n)/s_n^2&\leq&\frac{1}{2}\|\tv_n\|^2-\frac{1}{2s_n^2}\essinf V|u_n|_2^2-
	\int_{\R^2}\frac{F(x,u_n(x))}{s_n^2}\,dx\\
&\leq&\frac{1}{2}\|\tv_n\|^2-
\tilde{c}_\eps c' s_n^{p-2}|\tv_n|_p^p\leq\frac{1}{2}\|\tv_n\|^2-
\tilde{c}_\eps c' s_n^{p-2}\inf_{n\in\N}|\tv_n|_p^p\\\
	&\to&-\infty.
\end{eqnarray*}
Thus we get a contradiction, and therefore $(u_n)$ must be bounded so that part (a) of $(M)_0^\beta$ holds.

Now let us show part (b) of $(M)_0^\beta$. This part only applies in the case where $V(x)\equiv V_0$. So let us take another sequence 
$u_n'=v_n'+w_n'\in\cM$ is such that 
$$0\le\liminf_{n\to\infty}J(u_n')\le\limsup_{n\to\infty}J(u_n')\leq\beta\hbox{ and }J'(u_n')(1+\|v_n'\|)\to 0$$ as $n\to\infty$.
Suppose in addition that the number of critical orbits in $J_0^\beta$ is finite. 
Let
$$m:=\inf\big\{\|v-v'\|: J'(m(v))=J'(m(v'))=0, v\neq v'\big\}$$
and since $G=\Z^2$ is discrete (i.e., condition (G) holds), $m>0$. Now we split the proof into two cases.

\medskip

Case 1: $|v_n-v_n'|_p\to 0$. Then 
\begin{eqnarray*}
\|v_n-v_n'\|^2-\esssup V |v_n-v_n'|_2^2&\leq&\big(J'(u_n)-J'(u_n')\big)(u_n-u_n')\\
&&+\int_{\R^2}\langle f(x,u_n)-f(x,u_n'),v_n-v_n'\rangle\,dx.
\end{eqnarray*}
Since $(u_n), (u_n')$ are bounded Palais-Smale sequences the first term on the right hand side converges to $0$ and by \eqref{ceps} the last integral on the right hand side tends to $0$. Hence, using $\esssup V<k^2$ we see that $\|v_n-v_n'\|^2\to 0$ as $n\to\infty$ and the proof is finished. 

\medskip

Case 2: $\limsup |v_n-v_n'|_p>0$. Then by Lion's Lemma \cite[Lemma 1.21]{Willem}, up to a $\Z^2$ translation, $v_n\weakto v$ and $v_n'\weakto v'$ for some $v,v'\in\cV$ such that $v\neq v'$. We may assume that $w_n\weakto w$ and $w_n'\weakto w'$ for some $w,w'\in\cW$. In view of Corollary \ref{Cor:weak}, $J'(v+w)=J'(v'+w')=0$ and thus $\|v-v'\|\geq m$. In other words: if $\|v-v'\|< m$ then we are in the previous Case 1 and the proof is completed.
\end{proof}

\begin{altproof}{Theorem \ref{th:main}}
Suppose that $(u_n)\subset \cM$  sequence obtained in Theorem \ref{ThLink1}. By Lemma~\ref{lem:bounded} it is a bounded Palais-Smale sequence for the functional $J$. Passing to a subsequence, we find $\tu_0\in \cM$ and sequences $(\tu_i)_{i\geq 0}\subset \cM_0$, $(y_n^i)_{n\geq i}\subset \Z^2$ such that the statements of Theorem~\ref{ThSplit} hold. Observe that by Corollary \ref{Cor:weak},  $\tu_0$ is a critical point of $J$. Similarly we show that $\tu_i$ is a critical point of $J_0$ for $i\geq 1$. Indeed, take any $\vp\in X$ and since $|y_n^i|\to \infty$ and  $V-V_0\in L^{\frac{p}{p-2}}(\R^2)$ we obtain $|V(\cdot+y_n^i)-V_0|_{\frac{p}{p-2}}\to 0$  as $n\to\infty$ and
\begin{eqnarray*}
J'_0(u_n(\cdot+y_n^i))(\vp)&=&J'_0(u_n)(\vp(\cdot-y_n^i))=
J'(u_n)(\vp(\cdot-y_n^i))+\int_{\R^2}(V-V_0)\langle u_n, \vp(\cdot-y_n^i)\rangle \,dx\\
&=&J'(u_n)(\vp(\cdot-y_n^i))+\int_{\R^2}(V(\cdot+y_n^i)-V_0)\langle u_n(\cdot+y_n^i), \vp\rangle \,dx = o(1).
\end{eqnarray*}
Since 
$u_n(x+y_n^i)\to \tu_i(x)$ for a.e. $x\in\R^2$ by Vitaly's convergence theorem we infer that $ J_0'(\tu_i)(\vp)=0$.
We may assume that $\lim_{n\to\infty}I_F(u_n)$ exists and is positive, since otherwise (for a subsequence) $u_n\to 0$ in $L^p(\R^2)^6$
and
$$J(u_n)=J(u_n)-\frac12J'(u_n)(u_n) +o(1)=\int_{\R^2}\frac12 \langle f(x,u_n),u_n\rangle-F(x,u_n)\,dx=o(1)$$
we get a contradiction.
 Therefore, having $\lim_{n\to\infty}I_F(u_n)>0$, we know that  $\tu_0\neq 0$ or $\tu_1\neq 0$.

\medskip

We first treat the case where $V=V_0$, hence $J=J_0$. Due to shift-invariance in this case,  we may assume w.l.o.g. that $\tu_0\neq 0$. Clearly, by Fatou's lemma
\begin{equation*}
	\inf_{\cN_0}J_0=\lim_{n\to\infty}J_0(u_n)=\lim_{n\to\infty}\Big(J_0(u_n)-\frac12J_0'(u_n)(u_n)\Big)\geq  J_0(\tu_0)-\frac12J'_0(\tu_0)(\tu_0)=J_0(\tu_0),
\end{equation*}
thus $J_0(\tu_0)=\inf_{\cN_0}J_0$ and $\tu_0\in\cN_0$ is a ground state solution.  If, in addition, $f$ is odd in $u$, then in view of Lemma \ref{lem:bounded} and Theorem \ref{Th:CrticMulti} there is an infinite sequence of $\Z^2$-distinct solutions.

\medskip

Next we consider the case that $V(x)>V_0$ for a.e. $x\in\R^2$ and (F4') is satisfied. Again we apply Theorem \ref{ThSplit}. Let $v_n:=p_{\cV}(u_n)$ and $\tv_i:=p_{\cV}(\tu_i)$ for $n\geq 1$ and $0\leq i<K+1$.
In view of \eqref{eq:Decomp1} from Theorem~\ref{ThSplit}, for any $0\leq k<K+1$ we get
$$\lim_{n\to\infty}\|v_n\|^2\geq \sum_{i=0}^{k}\|\tv_i\|^2
\geq 2J(\tu_0)+\sum_{i=1}^{k}2J_0(\tu_i)\geq 2k\inf_{\cN_0}J_0,$$
hence  $K<\infty$. Observe that 
\begin{eqnarray*}
\lefteqn{
J'(u_n)\Big[v_n-\sum_{i=0}^K\tv_i(\cdot-y_n^i)\Big]} \\
&=&\Big\|v_n-\sum_{i=0}^K\tv_i(\cdot-y_n^i)\Big\|^2+
 b_L\Big(\sum_{i=0}^K\tv_i(\cdot-y_n^i), v_n-\sum_{i=0}^K\tv_i(\cdot-y_n^i)\Big)\\
 &&-I'(u_n)\Big[v_n-\sum_{i=0}^K\tv_i(\cdot-y_n^i)\Big]\\
 &=&\Big\|v_n-\sum_{i=0}^K\tv_i(\cdot-y_n^i)\Big\|^2+
 \sum_{j=0}^Kb_L\Big(\tv_j, v_n(\cdot+y_n^j)-\sum_{i=0}^K\tv_i(\cdot-(y_n^i-y_n^j))\Big)\\
 &&-I'(u_n)\Big[v_n-\sum_{i=0}^K\tv_i(\cdot-y_n^i)\Big]
\\
 &=&\Big\|v_n-\sum_{i=0}^K\tv_i(\cdot-y_n^i)\Big\|^2- I'(u_n)\Big[v_n-\sum_{i=0}^K\tv_i(\cdot-y_n^i)\Big]+o(1),
\end{eqnarray*}
since $v_n(\cdot+y_n^j)\weakto \tv_j$, and for $i\neq j$, $b_{L}(\tv_j,\tv_i(\cdot-(y_n^i-y_n^j)))\to 0$ as $n\to\infty$. 
Since (F4') holds, then by \eqref{eq:Decomp4} we get
 $u_n-\sum_{i=0}^K\tu_i(\cdot-y_n^i)\to 0$ in $L^p(\R^2)^6$, hence  $v_n-\sum_{i=0}^K\tv_i(\cdot-y_n^i)\to 0$ in $L^p(\R^2)^6$.
Then
\begin{eqnarray*}
I'(u_n)\Big[v_n-\sum_{i=0}^K\tv_i(\cdot-y_n^i)\Big]&=& V_0\Big|v_n-\sum_{i=0}^K\tv_i(\cdot-y_n^i)\Big|_2^2\\
&&+\sum_{j=0}^K\int_{\R^2}V_0\Big\langle \tv_j, v_n-\sum_{i=0}^K\tv_i(\cdot-(y_n^i-y_n^j)) \Big\rangle\,dx\\
&&+\int_{\R^2}(V(x)-V_0)\Big\langle v_n, v_n-\sum_{i=0}^K\tv_i(\cdot-y_n^i)\Big\rangle\\
&&+I'_F(u_n)\Big[v_n-\sum_{i=0}^K\tv_i(\cdot-y_n^i)\Big]\\
&=&V_0\Big|v_n-\sum_{i=0}^K\tv_i(\cdot-y_n^i)\Big|_2+o(1)
\end{eqnarray*}
and we get
\begin{eqnarray*}
J'(u_n)\Big[v_n-\sum_{i=0}^K\tv_i(\cdot-y_n^i)\Big]&=& \Big\|v_n-\sum_{i=0}^K\tv_i(\cdot-y_n^i)\Big\|^2-
V_0\Big|v_n-\sum_{i=0}^K\tv_i(\cdot-y_n^i)\Big|_2^2+o(1)\\
&\geq & \frac{k^2-V_0}{k^2} \Big\|v_n-\sum_{i=0}^K\tv_i(\cdot-y_n^i)\Big\|^2+o(1)
\end{eqnarray*}
and since $(u_n)$ is a bounded Palais-Smale sequence we obtain 
$$v_n-\sum_{i=0}^K\tv_i(\cdot-y_n^i)\to0\quad\hbox{in } H^1(\R^2)^6.$$
Arguing similarly as  above we get
\begin{eqnarray*}
	I'(u_n)\Big[u_n-\sum_{i=0}^K\tu_i(\cdot-y_n^i)\Big]&=& V_0\Big|u_n-\sum_{i=0}^K\tu_i(\cdot-y_n^i)\Big|_2^2+o(1)
\end{eqnarray*}
and 
$$u_n-\sum_{i=0}^K\tu_i(\cdot-y_n^i)\to0\quad\hbox{in } L^2(\R^2)^6.$$
Hence, using the divergence property $|y_n^i-y_n^j|\to \infty$ as $n\to \infty$ for $i\not= j$, we get
\begin{eqnarray*}
	\lim_{n\to\infty}\|v_n\|^2&=&\sum_{i=0}^{K}\|\tv_i\|^2,\\
	\lim_{n\to\infty}|u_n|^2_2&=&\sum_{i=0}^{K}|\tu|_2^2,
\end{eqnarray*}
and in view of \eqref{eq:Decomp3}  we finally get 
\begin{equation*}
c=\inf_{\cN}J=\lim_{n\to\infty}J(u_n)=J(\tu_0)+\sum_{i=1}^{K}J_0(\tu_i)\geq  
J(\tu_0) +K\inf_{\cN_0}J_0
\end{equation*}

Let us denote by $\xi$ the ground state solutions of $J_0$ obtained above. Observe that $J(t\xi+w(t\xi))\to -\infty$ as $t\to\infty$, $J(0+w(0))=0$, hence  we find $t>0$ such that
	$$J(t\xi+w(t\xi))\geq c,$$ where $c$ is given by \eqref{cmX}.
	Then, in view of Lemma \ref{ineq:Nehari} and Theorem \ref{ThLink1}
	$$\inf_{\cN_0}J_0=J_0(\xi)\geq J_0(t\xi+w(t\xi))>J(t\xi+w(t\xi))\geq c=\inf_{\cN}J.$$
	Therefore $K=0$ and $\tu_0\neq 0$ is a critical point of $J$. Observe that $\tu_0\in\cN$ and (using once more Fatou's Lemma)
	$$
		c=\lim_{n\to\infty}J(u_n)=\lim_{n\to\infty}\big(J(u_n)-\frac12J'(u_n)(u_n)\big)\geq \lim_{n\to\infty} J(\tu_0)-\frac12J'(\tu_0)(\tu_0)=J(\tu_0)
	$$
	so that $J(\tu_0)=\inf_{\cN}J$ and $\tu_0$ is a ground state solution.
\end{altproof}

\begin{altproof}{Theorem \ref{th:main2}}
 Here $V=V_0$ and we argue as in proof of Theorem \ref{th:main} and we use the notation introduced therein.
Suppose that $(u_n)\subset \cM_0$ is the bounded Palais-Smale sequence obtained in Section \ref{sec:PS}. Passing to a subsequence, we find sequences $(\tu_i)_{i\geq 0}\subset \cM_0$, $(y_n^i)_{n\geq i}\subset \Z^2$ such that all the statements of Theorem~\ref{ThSplit} hold. Observe that by Corollary \ref{Cor:weak},  $\tu_0$ is a critical point of $J$ and $\tu_i$ is a critical point of $J_0$.
Since $\lim_{n\to\infty}I_F(u_n)>0$, $\tu_0\neq 0$ or $\tu_1\neq 0$. Note that
$$\cK_0:=\big\{u\in X\setminus\{0\}: J'_0(u)=0\big\}\subset\cM_0$$
and we already know that $\cK_0\neq \emptyset$ since $\tilde u_0\not=0$ or $\tilde u_1\not=0$. It is easy to show that $c_0:=\inf_{\cK_0}J_0>0$ and, since $\cK$ is nonempty, we may take any minimizing sequence $(\xi_n)\subset\cK_0$ such that $J_0(\xi_n)\to c_0$. Similarly as above applying the profile decomposition from Theorem~\ref{ThSplit} to $(\xi_n)$, passing to a subsequence we find $(z_n)\subset\Z^2$ such that $\xi_n(\cdot+z_n)\weakto \xi\in\cK_0$ and $\xi_n(\cdot+z_n)\to \xi$ a.e. on $\R^2$.
Due to Fatou's Lemma we have 
$$
\lim_{n\to\infty}J_0(\xi_n)=\lim_{n\to\infty} \big(J_0(\xi_n)-\frac12J_0'(\xi_n)(\xi_n)\big)\geq J_0(\xi)-\frac12J_0'(\xi)(\xi)=J_0(\xi),
$$
$J_0(\xi)=c_0$ and we conclude. If, in addition, $f$ is odd in $u$, then in view of Lemma \ref{lem:bounded} and Theorem \ref{Th:CrticMulti} there is an infinite sequence of $\Z^2$-distinct solutions.
\end{altproof}

{\bf Acknowledgements.} Funded by the Deutsche Forschungsgemeinschaft (DFG, German Research Foundation) -- Project-ID 258734477 -- SFB 1173. J. Mederski was also partially supported by the Alexander von Humboldt Foundation (Germany) during the stay at Karlsruhe Institute of Technology and by the National Science Centre, Poland (Grant No. 2020/37/B/ST1/02742). He would like to express his deep gratitude to these institutions for their support and warm hospitality.


\end{document}